\theoremstyle{definition}
\newtheorem{dfn}{Definition}[section]
\newtheorem{ex}{Example}[section]
\newtheorem{thm}{Theorem}[section]
\newtheorem{lem}[thm]{Lemma}
\newtheorem{cor}[thm]{Corollary}
\newtheorem{prop}[thm]{Proposition}
\newtheorem{rem}{Remark}[section]
\newcommand{\G}{{\sf G}}
\newcommand{\x}{\mathbf{x}}
\newcommand{\y}{\mathbf{y}}
\newcommand{\mm}{\ell}
\newcommand{\nnn}{n}
\newcommand{\nn}{{\sf n}}
\newcommand{\qq}{q}
\newcommand{\C}{\mathbb{C}}
\newcommand{\K}{{\sf K}}
\newcommand{\Lsf}{{\sf L}}
\newcommand{\wbar}{\overline{w}}
\newcommand{\bra}{\langle}
\newcommand{\ket}{\rangle}
\newcommand{\var}{\mathrm{Var}}
\newcommand{\la}{\lambda}
\newcommand{\Prob}{{\mathbb P}}
\newcommand{\z}{\mathbb{Z}}
\newcommand{\n}{\mathbb{N}}
\newcommand{\E}{{\mathbb E}}
\newcommand{\bcal}{{\mathcal B}}
\newcommand{\Hcal}{{\mathcal H}}
\newcommand{\p}{{\mathcal P}}
\newcommand{\bb}{\mathbf{b}}
\newcommand{\s}{\mathbf{s}}
\newcommand{\oo}{\mathbf{o}}
\newcommand{\gsf}{{\sf g}}
\newcommand{\xis}{{\sf s}}
\newcommand{\xit}{{\sf t}}
\newcommand{\xiu}{{\sf u}}
\newcommand{\gggn}{\gsf ^{\nn }} 
\newcommand{\gggm}{\gsf ^{\mm }} 
\newcommand{\ZZ}{\mathcal{Z}}
\newcommand{\ind}{\mathbf{1}}
\newcommand{\eps}{\epsilon}
\newcommand{\nutilde}{\tilde{\nu}}
\newcommand{\radon}{m}
\newcommand{\ft}{f_T}
\newcommand{\fft}{F_T}
\newcommand{\ftk}{f_{T_k}}
\newcommand{\fftk}{F_{T_k}}
\newcommand{\flimit}{\widehat{f}}
\newcommand{\fflimit}{\widehat{F}}
\renewcommand{\phi}{\varphi}
\newcommand\DN{\newcommand}
\DN\url{\textsf}
\DN\lref[1]{Lemma~\ref{#1}}
\DN\tref[1]{Theorem~\ref{#1}}
\DN\pref[1]{Proposition~\ref{#1}}
\DN\sref[1]{Section~\ref{#1}}
\DN\dref[1]{Definition~\ref{#1}}
\DN\rref[1]{Remark~\ref{#1}} 
\DN\corref[1]{Corollary~\ref{#1}}
\DN\eref[1]{Example~\ref{#1}}
\DN\Ni{\mathbb{N}\cup\{ \infty \} }
\DN\hh{\mathfrak{h}}
\DN\hhr{\mathfrak{h}_{r}}
\DN\hhq{\mathfrak{h}_{q}}
\DN\hhh{\hat{\mathfrak{h}}}
\DN\hhi{\hh _{\infty}}
\DN\hhik{\hhi ^{k}}
\DN\hht{H^{x}}
\DN\hhtt{H}
\DN\QQQ{Q}
\DN\QQQall{\tilde{\QQQ }}
\DN\Cb{C_b(\QQQ )}
\DN\dist{\mathsf{d}}
\DN\limi[1]{\lim_{#1\to\infty}} 	
\numberwithin{equation}{section}
\DN\xpp{\x_{p+1}}
\DN\xpm{\x_{p-1}}
\DN\xp{\x_{p}}
\DN\xpa{x_{p}}
\DN\Lnui{L^{\infty}(\nu )}
\DN\Hj{}
\DN\Hi{1_{\mathcal{H}_{\infty} }}
\DN\jj{j}
\DN\kk{k}
\DN\intQ{\int_{Q}}
\DN\cpir{\circ \pi_q }
\DN\Linu{L^{\infty}(\nu )}
\DN\Lone{L^1}
\begin{document}

\title{Absolute continuity and singularity of Palm measures of the 
Ginibre point process
}


\author{Hirofumi Osada \and Tomoyuki Shirai 
}



\maketitle

\begin{abstract}
We prove a dichotomy between absolute continuity and singularity of 
the Ginibre point process $\mathsf{G}$ and its reduced Palm measures
$\{\mathsf{G}_{\mathbf{x}}, \mathbf{x} \in \mathbb{C}^{\ell}, 
\ell = 0,1,2\dots\}$, namely, reduced Palm measures 
$\G_{\mathbf{x}}$ and $\G_{\mathbf{y}}$ 
for $\mathbf{x} \in \mathbb{C}^{\ell}$
and $\mathbf{y} \in \mathbb{C}^{n}$ are mutually 
absolutely continuous if and only if $\ell = n$; 
they are singular each other 
if and only if $\ell \not= n$. 
Furthermore, we give an explicit expression of the Radon-Nikodym density
 $d\G_{\mathbf{x}}/d \G_{\mathbf{y}}$ 
for $\mathbf{x}, \mathbf{y} \in \mathbb{C}^{\ell}$. 

\bigskip 
\textsf{Ginibre point process:  Palm measure:  absolute continuity:  singularity}
\end{abstract}

\noindent 
{\small \footnote{\noindent 
H. Osada:  Faculty of Mathematics, Kyushu University, Fukuoka, 819-0395,  JAPAN
               \\   \quad \quad     \texttt{osada@math.kyushu-u.ac.jp}             \\
  \quad \quad          T. Shirai : 
      Institute of Mathematics for Industry, Kyushu University, Fukuoka, 819-0395,  JAPAN
               \\
          \quad \quad     \texttt{shirai@imi.kyushu-u.ac.jp}           %
}}

\section{Introduction}\label{s:1}
The Ginibre point process $ \G $ is a probability measure on  
the configuration space over $ \mathbb{C}$ ($ \cong \mathbb{R}^2$), whose 
$ k $-correlation function $ \rho_k$ with respect to 
the complex Gaussian measure $\pi^{-1} e^{-|z|^2}dz $ on $\C$ 
is given by the determinant 
\begin{align}\notag 
\rho_{k}  (z_1,\ldots,z_k) = 
\det [ \K (z_i,z_j) ] _{1\le i,j\le k }
\end{align}
of the exponential kernel $\K : \C \times \C \to \C$ defined by  
\begin{align}\notag 
\K(z,w) = e^{z \wbar}. 
\end{align}
It is known that $ \G $ is translation and rotation invariant. 
Moreover, $ \G $ is the weak limit of the distribution $ \G ^{\nn }$ 
of the eigenvalues of the non-Hermitian Gaussian random matrices, 
called Ginibre ensemble of size $\nn$. 
The labeled density $ m^{\nn}$ of $\G^{\nn}$ 
with respect to the Lebesgue measure on $\C^{\nn}$ is then given by 
\begin{align}\label{:11b}
 m^{\nn } (z_1,\ldots, z_{\nn }) &= \frac{1}{\mathcal{Z}_{\nn } }
\prod_{i,j=1 \atop i < j }^{\nn } |z_i-z_j|^2 \prod_{k=1}^{\nn }
e^{-|z_k|^2}. 
\end{align}
Very intuitively, from \eqref{:11b}, 
one may regard $ \G $ as an equilibrium state 
with logarithmic interaction potential (2-dimensional Coulomb potential) 
$ \Psi (z) = - 2 \log |z| $, and $ \G $ has an informal expression 
\begin{align}\label{:11c}&
\G \sim \frac{1}{\mathcal{Z}_{\infty} }
\prod_{i,j=1 \atop i < j }^{\infty} |z_i-z_j|^2 
\prod_{k=1}^{\infty}  e^{-|z_k|^2} dz_k
.\end{align}
Taking the translation invariance of $ \G $ into account, we have 
another informal expression 
\begin{align}\label{:11cc}&
\G \sim \frac{1}{\mathcal{Z}_{\infty}' }
\prod_{i,j=1 \atop i < j }^{\infty} |z_i-z_j|^2 
\prod_{k=1}^{\infty} dz_k. 
\end{align}
Indeed, as we see in Section \ref{s:10}, if $ \mu $ is a translation invariant 
canonical Gibbs measure with interaction potential $ \Psi (x,y) = \Psi (x-y)$ 
and inverse temperature $ \beta >0 $, then $ \mu $ is informally given by 
\begin{align}\label{:11ccc}&
\mu \sim \frac{1}{\mathcal{Z}} \prod_{i,j=1\atop i<j}^{\infty}
e^{-\beta \sum_{i<j, i,j=1}^{\infty} 
\Psi (z_i-z_j)} \prod_{k=1}^{\infty} dz_k
.\end{align}
Taking $ \Psi (z) = -\log |z|$ and $ \beta = 2 $, we obtain \eqref{:11cc}. 
We thus see that the informal expression \eqref{:11cc} is an analogy of that of translation invariant canonical Gibbs measures. 
 
In both cases, we have no straightforward justification because of
the unboundedness of the logarithmic potential at infinity and the presence of
the infinite product of the Lebesgue measure.

When the interaction potential $ \Psi $ is of Ruelle's class, then the associated equilibrium state 
described by the Dobrushin-Lanford-Ruelle equation (DLR equation), and 
the expression corresponding to \eqref{:11c} are surely justified. 
The DLR equations also guarantee the existence of the local density 
in bounded domains 
for fixed outside configuration, and play an important role 
not only for the static problem but also for the dynamical problem 
of the associated infinite particle system. 
However, since $ 2 \log |x| $ is unbounded at infinity, 
one can no longer use the well developed theory based on the DLR equations 
for the Ginibre point process and 
other point processes appearing in the random matrix theory. 

In \cite{o.isde} and \cite{o.rm}, the first author introduced the
notions of quasi-Gibbs property and the logarithmic derivative of $ \G $ 
(see \eqref{:21kk}) to remedy such a situation.  
The quasi-Gibbs property provides a local
density for fixed outside configuration, and the logarithmic derivative gives a
precise correspondence between point processes and potentials.
From these he has deduced that 
the natural labeled stochastic dynamics associated with the Ginibre
point process is given by the infinite-dimensional stochastic differential equation (ISDE): 
\begin{align}\label{:11e}&
dZ_t^i = dB_t^i - Z_t^i dt 
 + \lim_{r\to\infty} 
\sum_{ | Z_t^j | < r  \atop j\not=i , \ j \in\mathbb{N} }
 \frac{ Z_t^i-Z_t^j }{ | Z_t^i-Z_t^j |^2} dt 
\quad (i\in\mathbb{N}) 
.\end{align}
In fact, the Ginibre point process $ \G$ is the equilibrium state of the unlabeled dynamics $ \mathsf{Z}_t = \sum_{i\in\mathbb{N}} \delta_{Z_t^i}$ associated with \eqref{:11e}. 
A surprising feature of this dynamics is that it satisfies 
the second ISDE \cite{o.isde}: 
\begin{align}\label{:11d}&
dZ_t^i = dB_t^i + \lim_{r\to\infty} 
\sum_{ | Z_t^i-Z_t^j | < r  \atop j\not=i  , \ j \in\mathbb{N}  }
 \frac{ Z_t^i-Z_t^j }{ | Z_t^i-Z_t^j |^2} dt 
\quad (i\in\mathbb{N}) 
.\end{align}
The ISDEs \eqref{:11e} and \eqref{:11d} correspond to the informal expressions 
\eqref{:11c} and \eqref{:11cc}, respectively.  

One of the key ingredients of the proof of this result is the small
fluctuation property  
\begin{align}\label{:11h}&
\mathrm{Var}^{\G }(\bra \xis, \ind_{D_r} \ket)  
\sim O(r) \quad (r\to\infty )
\end{align}
obtained by the second author \cite{ST3}. 
Here $ \xis=\sum _{i}\delta _{s_i}$, $\ind_{D_r}$ is the indicator
function of the disk $ D_r =\{ |z| \le r \} $, and 
$ \langle \xis, f \rangle = \sum_{i}f (s_i)$. 
From \eqref{:11h} we see that the order of the variance of 
the Ginibre point process 
is half of that of the Poisson point process with intensity $ dx $. 
This is a result of the strength of 
the 2-dimensional Coulomb interaction at infinity. 

Two typical translation invariant point processes over $ \mathbb{R}^{d} $ 
are the Poisson point process with Lebesgue intensity and 
(randomly shifted) periodic point process. 
The former is the most random point process, 
while the latter is the most deterministic one. 
Translation invariant canonical Gibbs measures 
are the standard class of the point processes 
belonging to the Poisson category. We remark that 
the Ginibre point process has intermediate properties between 
the Poisson and the periodic point processes. 
Indeed, the exponent in \eqref{:11h} is the same as the periodic point
processes and 
the existence of the local density and the associated stochastic dynamics 
like \eqref{:11d} implies that the Ginibre point process 
is similar to the Poisson point processes. 

The most prime interaction potential in $ \mathbb{R}^{d}$ is the 
$ d $-dimensional Coulomb potential although this is outside of the
classical theory of Gibbs measures based on DLR equations. 
In Section \ref{s:10}, 
we introduce the notion of strict Coulomb point processes, which are point processes on $ \mathbb{R}^d$ interacting through $ d $-dimensional Coulomb potential $ \Psi _{d}$ defined as \eqref{:a1} with $\gamma = d $. 
We rigorously formulate the notion of strict Coulomb point processes 
based on the notion of logarithmic derivative of point processes (see \eqref{:21kk} for the definition of the logarithmic derivative). 
As we have seen above, 
the Ginibre point process is the case of the 2-dimensional
Coulomb potential in $ \mathbb{R}^2$ at the inverse temperature $\beta = 2 $, and an example of translation invariant strict Coulomb point processes. 
To our knowledge, the Ginibre point process is 
the only known example of such a type of point processes. 
%
We thus see that the Ginibre point process is one
of the most important interacting particle systems, significantly different from
Poisson and periodic point processes.  Our aim is to study specific
features of the Ginibre point process 
arising from the Coulomb potential and shed new light on it.

In the present paper, we focus on the absolute continuity and the
singularity of the Ginibre point process $\G $ and 
its reduced Palm measures 
$\{\G_{\x} , \x \in \C^{\mm}, \mm=0,1,2\dots\}$ (see \eqref{:21b}). 
Here we interpret $ \G_{\x} = \G $ if $ \mm = 0 $. 
Throughout the paper, by Palm measures we always mean \textit{reduced} 
Palm measures. 

The main results are the following. 

\begin{thm}\label{main1}
Assume that $ \x \in \C^{\mm}$ and $ \y \in \C^{\nnn}$.  
If $ \mm = \nnn$, then 
$\G_{\x}$ and $\G_{\y}$ are mutually absolutely continuous. 
In addition, if $ \mm \not= \nnn$, 
then $\G_{\x}$ and $\G_{\y}$ are singular each other. 
\end{thm}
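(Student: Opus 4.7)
My plan for Theorem~\ref{main1} treats the two directions separately. The singularity part ($\mm \neq \nnn$) should follow quickly from number rigidity, while the absolute continuity ($\mm = \nnn$) requires a careful passage to the limit in the finite-$\nn$ Palm densities.

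\textbf{Singularity when $\mm \neq \nnn$.} I would build on the number rigidity of the Ginibre point process: for every bounded Borel $B \subset \C$, the count $\eta \mapsto \eta(B)$ is $\G$-a.s.\ a measurable function $f_B$ of the restriction $\eta|_{B^c}$. This rigidity is a direct consequence of the small-fluctuation estimate \eqref{:11h} via the Ghosh--Peres argument (the $O(r)$ variance of $\langle \eta, \ind_{D_r}\rangle$ is subcritical in two dimensions). To deduce singularity I would choose a bounded Borel set $B$ containing all coordinates of both $\x$ and $\y$. A sample $\xi$ under $\G_\x$ produces the full Ginibre-like configuration $\eta = \xi + \sum_{i=1}^{\mm} \delta_{x_i}$ with $\eta|_{B^c} = \xi|_{B^c}$ and $\eta(B) = \xi(B) + \mm$, so rigidity forces
\begin{equation}\notag
\xi(B) = f_B(\xi|_{B^c}) - \mm \qquad \text{for $\G_\x$-a.e.\ } \xi,
\end{equation}
and the parallel identity with $\nnn$ in place of $\mm$ holds $\G_\y$-a.s. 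If $\mm \neq \nnn$ the two events are disjoint, so $\G_\x \perp \G_\y$.

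\textbf{Absolute continuity when $\mm = \nnn$.} Here I would pass to the limit in the finite-$\nn$ ensembles $\G^{\nn}$. From \eqref{:11b}, a direct computation of the reduced Palm densities should give
\begin{equation}\notag
\frac{d\G^{\nn}_\x}{d\G^{\nn}_\y}(\xi) = C^{(\nn)}(\x,\y)\, e^{|\y|^2-|\x|^2} \prod_{i=1}^{\mm} \prod_{k=1}^{\nn-\mm} \frac{|x_i - z_k|^2}{|y_i - z_k|^2},
\end{equation}
where $\xi = \sum_k \delta_{z_k}$ and $C^{(\nn)}(\x,\y) = \bigl(\prod_{i<j}|x_i-x_j|^2 / |y_i-y_j|^2\bigr) \cdot \rho_\mm^{\nn}(\y)/\rho_\mm^{\nn}(\x)$ converges to a positive constant $C(\x,\y)$ as $\nn \to \infty$ by pointwise convergence of the correlation functions. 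The crucial step is the limit of the infinite product. Using the expansion
\begin{equation}\notag
\log \frac{|x_i-z|^2}{|y_i-z|^2} = -2\operatorname{Re}\!\left(\frac{x_i-y_i}{z}\right) + O(|z|^{-2}) \qquad (|z| \to \infty),
\end{equation}
and noting that rotational symmetry of $\G$ makes the truncated linear statistic $\sum_{|z_k|>R}(x_i-y_i)/z_k$ centered, the variance bound \eqref{:11h} combined with the integrability of $|z|^{-4}$ on $\{|z|>R\}$ should give $L^2$ smallness of the tail as $R \to \infty$, hence $\G_\y$-a.s.\ convergence of the infinite product to a strictly positive, finite random variable. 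I would then verify that the candidate
\begin{equation}\notag
\frac{d\G_\x}{d\G_\y}(\xi) = C(\x,\y)\, e^{|\y|^2 - |\x|^2} \prod_{i=1}^{\mm} \prod_{k}\frac{|x_i - z_k|^2}{|y_i - z_k|^2}
\end{equation}
is a genuine Radon--Nikodym density by checking $L^1(\G_\y)$ convergence of the prelimits and that the limit integrates to $1$.

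\textbf{Main obstacle.} The singularity direction, while essential, should reduce to a short argument once rigidity is in hand. The real work is the absolute continuity direction: the limiting infinite product is only conditionally convergent, so its control rests on a 2-dimensional Coulomb cancellation leveraged through \eqref{:11h}, and the passage $\nn \to \infty$ must be accompanied by a uniform-integrability type argument to guarantee that no mass escapes to infinity and the limiting density has total mass exactly $1$.
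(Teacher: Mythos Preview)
Your overall strategy matches the paper's two-part structure, but the two halves differ from the paper in interesting ways.

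\textbf{Singularity.} Your route via number rigidity is genuinely different from the paper's. The paper does \emph{not} invoke Ghosh--Peres rigidity; instead it constructs the explicit Ces\`aro-mean statistic $F_T(\xi) = T^{-1}\int_0^T(\xi(D_{\sqrt r})-r)\,dr$ and proves directly (via the variance bound \eqref{:11h} and Kostlan's independence of squared moduli) that $F_T \to -\mm$ weakly in $L^2(\G_\x)$, then uses a general lemma that distinct constant weak limits force singularity. Your rigidity argument is shorter and perfectly valid, with one caveat: the Palm disintegration transfers the rigidity identity to $\G_\x$ only for $\lambda_\mm$-a.e.\ $\x$, not every $\x$. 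To upgrade to all $\x \in \C^\mm$ you must, as the paper does, feed in the absolute-continuity half ($\G_\x \sim \G_{\x_0}$ for any fixed $\x_0$ in the full-measure set). So the logical dependency is the same; the paper's approach buys the explicit detector $F_T$ (Theorem~\ref{l:13}) as a byproduct, while yours is more economical if rigidity is taken as known.

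\textbf{Absolute continuity.} Here your outline coincides with the paper's in spirit---finite-$\nn$ Palm ratios, Taylor expansion of the log, centering of the $1/z$ term, variance control---but you should not underestimate the step you flag as the ``main obstacle.'' The paper devotes an entire abstract framework (Section~\ref{s:7}, assumptions \thetag{A1}--\thetag{A5}) to exactly the passage you summarize as ``check $L^1$ convergence and that the limit integrates to $1$.'' The subtlety is that weak convergence $\G^\nn_\y \to \G_\y$ does \emph{not} by itself let you pass the unbounded ratio $\hh^\nn = d\G^\nn_\x/d\G^\nn_\y$ to the limit; the paper introduces truncation sets $\mathcal H_k = \{a_k^{-1} \le \inf_r \hh_r \le \sup_r \hh_r \le a_k\}$, proves their masses tend to $1$ uniformly in $\nn$ (condition \thetag{A3}), controls the tail factor $\hh_r^c$ uniformly in $\nn$ (condition \thetag{A5}), and runs a three-step limit $\nn\to\infty$, $r\to\infty$, $k\to\infty$ with careful subsequence and monotonicity arguments. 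Your variance sketch for the $1/z$ tail is the seed of the paper's Lemmas~\ref{l:81}--\ref{l:83}, but the bridge from ``the log-product is $L^1$-Cauchy'' to ``the limit is the Radon--Nikodym density with total mass $1$'' is where the real work lies, and your proposal does not yet supply a mechanism for it.
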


Such a singularity result of Palm measures is quite different from that
of Gibbs measures. Indeed, if $ \mu $ is a translation invariant
canonical Gibbs measures with Ruelle's class potential, then $
\mu_{\x} \prec \mu $ for any $
\x=(x_1,\ldots,x_{\mm})$, where $ \mu \prec \nu $ means $
\mu $ is absolutely continuous with respect to $ \nu $. 
Roughly speaking, $-\log \frac{d\mu_{\x}}{d\mu}(\xis)$ describes 
the total energy at $\x$ from a given configuration. 
In this sense, informally, $-\log \frac{d\G_{\x}}{d\G}(\xis) = \pm \infty$ 
when $\mm \ne 0$. 

When $\mm = \nnn$, we have an explicit expression of the Radon-Nikodym
density ${d\G_{\x }}/{d\G_{\y}}$.  

\begin{thm}\label{l:21}
For each $\x ,\y \in \C^{\mm}$, the Radon-Nikodym 
density ${d\G_{\x }}/{d\G_{\y }}$ is given by 
\begin{align}\label{:21c}&
\frac{d\G_{\x }}{d\G_{\y }}(\xis) = \ZZ_{\x \y }^{-1}
\lim_{r\to \infty} \prod_{|s_i| < b_r }\frac{|\x -s_i|^2}{|\y -s_i|^2}  
\end{align}
for $\G_{\y }$-a.s. $\xis$. 
Here $\xis=\sum_i\delta_{s_i}$ and $ \{ b_r \}_{r\in \n} $ is an 
increasing sequence of natural numbers.  
We use a convention such that
\begin{align}\notag 
|\x -s| = \prod_{j=1}^{\mm}|x_j - s| 
\quad \text{for $ \x =(x_1,\ldots, x_{\mm})$ and $s \in \C$}.  
\end{align}
The convergence in \eqref{:21c} takes place 
uniformly in $\x$ on any compact set in $\C^{\mm}$. 
Moreover, the normalization constant $\ZZ_{\x \y }$ is given by 
\begin{align}\label{:21g}& 
 \ZZ_{\x \y } = Z(\x) Z(\y)^{-1}. 
\end{align}
Here $Z(\x)$ is the smooth function on $\C^{\mm}$ defined 
as the unique continuous extension of the function 
\begin{equation}
Z(\x) = \frac{\det [\K(x_i,x_j)]_{i,j=1}^{\mm}}{|\Delta(\x)|^2}
\label{:zx}
\end{equation}
defined for $\x \in \C^{\mm}$ with $\Delta(\x) \not= 0$, where 
$\Delta(\x) = \prod_{1 \le i < j \le \mm} (x_i - x_j)$ 
when $\mm \ge 2$; $\Delta(\x) = 1$ when $\mm=1$.   
\end{thm}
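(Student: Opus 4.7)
The strategy is to start from the finite Ginibre ensemble $\G^{\nn}$, whose explicit joint density \eqref{:11b} makes the finite Palm ratio transparently computable, and then pass to the limit $\nn\to\infty$ with the disk truncation controlled by the small-fluctuation estimate \eqref{:11h}. For $\mm \le \nn$, the symmetric Lebesgue density of $\G^{\nn}_{\x}$ on $\C^{\nn-\mm}$ is proportional to $\prod_{j,k}|x_j-s_k|^2\prod_{k<l}|s_k-s_l|^2\prod_k e^{-|s_k|^2}$, giving
\begin{align*}
\frac{d\G^{\nn}_{\x}}{d\G^{\nn}_{\y}}(\xis) = \frac{C^{\nn}_{\y}}{C^{\nn}_{\x}}\prod_{i=1}^{\nn-\mm}\frac{|\x-s_i|^2}{|\y-s_i|^2},
\end{align*}
where $C^{\nn}_{\x}$ is the normalizing integral. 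Expressing $C^{\nn}_{\x}$ through the $\mm$-point correlation $\rho^{\nn}_{\mm}(\x)=\det[\K^{\nn}(x_i,x_j)]\prod_j\pi^{-1}e^{-|x_j|^2}$, with truncated kernel $\K^{\nn}(z,w)=\sum_{k=0}^{\nn-1}(z\wbar)^k/k!$, the Gaussian prefactors cancel in the quotient and $C^{\nn}_{\y}/C^{\nn}_{\x}\longrightarrow Z(\y)/Z(\x)=\ZZ_{\x\y}^{-1}$ as $\nn\to\infty$ because $\K^{\nn}\to\K$ uniformly on compacta. That $Z(\x)$ extends continuously across coincident $x_i$ follows from a Cauchy--Binet expansion of $\det[\K(x_i,x_j)]$ as $|\Delta(\x)|^2$ times a positive sum of squared Schur polynomials.

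The main work is to show that $\prod_{|s_i|<b_r}|\x-s_i|^2/|\y-s_i|^2$ converges $\G_{\y}$-a.s.\ along a suitably chosen $b_r\in\n$. Taking logarithms, view $L_r=\sum_{|s_i|<b_r}f(s_i)$ as a linear statistic with test function $f(s)=\log|\x-s|^2-\log|\y-s|^2$. For $|s|$ large, $f(s)=2\mathrm{Re}[c_{\x\y}/s]+O(|s|^{-2})$ with $c_{\x\y}=\sum_i(y_i-x_i)$; by rotational invariance of the constant Ginibre intensity $\rho_1\equiv\pi^{-1}$ the leading oscillatory term integrates to zero, so $\E^{\G_{\y}}[L_r]$ has a finite limit. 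For the variance, the decisive input is \eqref{:11h}: although each individual sum $\sum_i\log|x_j-s_i|^2$ fluctuates by $O(\log r)$ (the log-correlated behaviour of the characteristic polynomial of $\G$), the Coulombic cancellation between the $\x$- and $\y$-contributions yields $\sup_r\var^{\G_{\y}}[L_r]<\infty$; concretely, one applies the small-fluctuation estimate to a mollification of $f$ near its logarithmic singularities at $\x$ and $\y$. Choosing $b_r$ increasing fast enough that $\sum_r\var^{\G_{\y}}[L_{r+1}-L_r]<\infty$, Borel--Cantelli yields the $\G_{\y}$-a.s.\ convergence of $L_r$, and the continuous dependence of $f$ on $\x$ together with uniform variance bounds upgrades this to uniform convergence in $\x$ on compacta.

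To identify the limit with $d\G_{\x}/d\G_{\y}$, test the finite identity above against a bounded cylindrical functional $F$ depending only on $\xis|_{D_{b_r}}$; the passage $\nn\to\infty$ uses the weak convergence $\G^{\nn}_{\x}\to\G_{\x}$ and $\G^{\nn}_{\y}\to\G_{\y}$ of Palm measures (standard for determinantal processes via convergence of Palm kernels), and the subsequent $r\to\infty$ limit is handled by uniform integrability coming from the $L^2$-bound of the previous step. This pins down
\begin{align*}
\frac{d\G_{\x}}{d\G_{\y}}(\xis)=\ZZ_{\x\y}^{-1}\lim_{r\to\infty}\prod_{|s_i|<b_r}\frac{|\x-s_i|^2}{|\y-s_i|^2}\qquad\G_{\y}\text{-a.s.}
\end{align*}
together with the normalization $\ZZ_{\x\y}=Z(\x)/Z(\y)$, as claimed. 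The expected principal obstacle is the variance control in the middle paragraph: extracting finite fluctuations for $L_r$, uniformly in $\x$, from the one-level Coulombic rigidity \eqref{:11h} despite the slow $O(|s|^{-1})$ decay of $f$ and its logarithmic singularities---precisely the feature distinguishing $\G$ from Poisson-type point processes.
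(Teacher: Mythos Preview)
Your overall strategy coincides with the paper's: compute the Palm ratio for the finite ensemble $\G^{\nn}$, identify the normalizing constant via $\rho_{\mm}^{(\nn)}(\x)=\det[\K^{\nn}(x_i,x_j)]$, and pass to the limit. The treatment of the constant $\ZZ_{\x\y}$ and its continuous extension via Cauchy--Binet is essentially what the paper does in Section~4 and Remark~\ref{rem:zx}.

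There are, however, two genuine gaps in the analytic core.

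\emph{Variance control.} Your claim that ``Coulombic cancellation'' makes $\sup_r\var^{\G_\y}[L_r]<\infty$ is the heart of the matter and is not established by mollifying $f$ and invoking \eqref{:11h}. The test function $f(s)=\log|\x-s|^2-\log|\y-s|^2$ still has logarithmic singularities at each $x_j,y_j$; its $H^1$-norm is infinite, so the Rider--Vir\'ag bound (Lemma~\ref{l:66}) does not apply, and a mollification would have to be accompanied by a separate argument for the near-singularity contribution that you do not supply. The paper avoids this entirely: it Taylor-expands $\log(1-x/s)$ for $|s|$ large, reduces to controlling $G_{\alpha,r,R}(\xis)=\sum_{r<|s_i|\le R}s_i^{-\alpha}$ for $\alpha=1,2$, and bounds these via summation by parts against $F_{\alpha,r}(\xis)=\sum_{1<|s_i|\le r}(\lceil|s_i|\rceil/s_i)^\alpha$, for which $\|F_{\alpha,r}\|_{L^2}=O(r^{1/2})$ is proved (Lemmas~\ref{l:81}--\ref{l:83}). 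This yields $\|\log H^x_{r,R}\|_{L^1(\nu^{\nn})}=O(r^{-1/2})$ uniformly in $\nn$ and $R$, which is what drives both the a.s.\ convergence (via \eqref{:84a}) and the tail estimate.

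\emph{Identification of the limit.} Your passage ``test against a cylindrical $F$, let $\nn\to\infty$ by weak convergence, then $r\to\infty$ by uniform integrability from the $L^2$-bound'' hides the main difficulty. At the finite level the density is $\hh=\hh_r\cdot\hh_r^c$, and $\hh_r$ is \emph{unbounded} on $Q$; weak convergence of $\G^{\nn}_\y$ does not by itself give $\E^{\G^{\nn}_\y}[F\hh_r]\to\E^{\G_\y}[F\hh_r]$, and an $L^2$-bound on $\log\hh_r$ does not yield uniform integrability of $\hh_r=e^{\log\hh_r}$. The paper's remedy is structural: it conditions on the sets $\Hcal_k=\{a_k^{-1}\le\hh_r\le a_k\ \forall r\}$, shows $\liminf_{\nn}\nu^{\nn}(\Hcal_k)\to1$ as $k\to\infty$ (assumption \thetag{A3}, verified via Corollary~\ref{cor:log}), proves convergence of the conditioned measures $\nu^{\nn}_k$ where $\hh_r$ is bounded, controls the tail factor via $\sup_{\nn}\int|\hh_r^c-1|\,d\nu^{\nn}_k=o(1)$ (assumption \thetag{A5}, Lemma~\ref{l:86}), and only then lets $k\to\infty$ using monotonicity (Lemmas~\ref{l:73}--\ref{l:75}, Propositions~\ref{l:79}--\ref{l:7X}). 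This three-step limit $(\nn,r,k)$ with the $\Hcal_k$-truncation is the missing mechanism in your sketch.
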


For example, by \lref{Phiz}, we see that 
\[
Z({\bf 0}) = \left(\prod_{k=1}^{\mm-1} k!\right)^{-1}. 
\]
In general, $Z(\x)$ can be expressed as a series expansion by Schur functions. 
See \rref{rem:zx} in Section~9. 

To prove the singularity between 
$\G_{\x} $ and $\G_{\y} $ when $\mm \not= \nnn$, we introduce 
a family of real-valued functions $\{\fft\}_{T > 0} $ 
defined on the configuration space $Q$ over $\C$ by 
\begin{align}\label{:13a}&
\fft(\xis) = \frac{1}{T} \int_{0}^{T} (\xis(D_{\sqrt{r}}) - r) dr, 
\end{align}
where $D_r$ is the disk of radius $r$ and $\xis(A)$ is the number of
points inside a measurable set $A$. 
\begin{thm}\label{l:13}
Let $\G_{\x}$ be the Palm measure of $\G$ conditioned at  
$\x \in \C^{\mm}$. Then 
the function $ \fft$ converges weakly in $ L^2(Q, \G_{\x} )$ and
 satisfies  
\begin{align}\notag
\lim_{T \to \infty} \fft = - \mm. 
\end{align}
\end{thm}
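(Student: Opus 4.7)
The plan is to exploit the determinantal (indeed projection-kernel) structure that the Palm measure $\G_{\x}$ inherits from $\G$.

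\textbf{Reformulation and the mean.} Swapping the order of integration recasts $\fft$ as a linear statistic:
\begin{align*}
\fft(\xis) = \sum_{i} g_{T}(s_i) - \frac{T}{2}, \qquad g_{T}(z) := \left(1 - \frac{|z|^2}{T}\right)_{+}.
\end{align*}
The Palm measure $\G_{\x}$ is the determinantal process on $L^2(\C, \mu)$ with $d\mu(z) := \pi^{-1}e^{-|z|^2}dz$ and projection kernel
\begin{align*}
\K^{\x}(z,w) := \K(z,w) - \sum_{i,j=1}^{\mm}\K(z,x_i)(K_{\x\x}^{-1})_{ij}\K(x_j,w), \qquad K_{\x\x} := [\K(x_a, x_b)]_{a,b=1}^{\mm},
\end{align*}
obtained by deflating $\K$ on the span of $\{\K(\cdot, x_i)\}_{i=1}^{\mm}$ (a standard Palm identity for projection DPPs). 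Since $\K^{\x}$ remains an orthogonal projection,
\begin{align*}
\int_{\C}|\K^{\x}(z,w)|^2 d\mu(z) = \K^{\x}(w,w).
\end{align*}
The reproducing identity $\int_{\C}\K(z,x_i)\K(x_j,z)d\mu(z) = \K(x_i, x_j)$ gives $\int_{\C}(\K - \K^{\x})(z,z)d\mu(z) = \tr I_{\mm} = \mm$, whence
\begin{align*}
\E^{\G_{\x}}[\fft] = -\int_{\C} g_{T}(z)(\K - \K^{\x})(z,z)d\mu(z) \longrightarrow -\mm \qquad (T \to \infty),
\end{align*}
with exponentially small remainder coming from the tail outside $D_{\sqrt{T}}$.

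\textbf{Boundedness in $L^2$.} Combining the DPP variance formula with the projection identity gives
\begin{align*}
\var^{\G_{\x}}(\fft) = \frac{1}{2}\iint_{\C^2}[g_{T}(z) - g_{T}(w)]^2 |\K^{\x}(z,w)|^2 d\mu(z)d\mu(w).
\end{align*}
The elementary bound $|g_{T}(z) - g_{T}(w)| \le 2|z-w|/\sqrt{T}$ (using $||z|^2 - |w|^2| \le (|z|+|w|)|z-w|$ inside $D_{\sqrt{T}}$ and a separate boundary estimate), together with $\int_{\C}|z-w|^2|\K(z,w)|^2 d\mu(w) = \K(z,z)$ and $\int_{D_{\sqrt{T}}}\K(z,z)d\mu(z) = T$, bounds the leading $|\K|^2$-term by a constant; the rank-$\mm$ contributions from $P := \K - \K^{\x}$ are also $O(1)$ by the same scheme plus the exponential decay of $P$. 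Thus $\sup_{T}\|\fft\|_{L^{2}(\G_{\x})} < \infty$.

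\textbf{Weak-limit identification and main obstacle.} For a test linear statistic $g = \sum_{i} h(s_i)$ with bounded compactly supported $h$, the same manipulation yields
\begin{align*}
\mathrm{Cov}^{\G_{\x}}(\fft, g) = \iint_{\C^2} g_{T}(z)[h(z) - h(w)]|\K^{\x}(z,w)|^2 d\mu(z)d\mu(w).
\end{align*}
Since $0 \le g_{T} \le 1$ and $g_{T}(z) \to 1$ pointwise, dominated convergence and the projection identity give
\begin{align*}
\mathrm{Cov}^{\G_{\x}}(\fft, g) \longrightarrow \iint_{\C^2}[h(z) - h(w)]|\K^{\x}(z,w)|^2 d\mu(z)d\mu(w) = 0.
\end{align*}
Hence $\E^{\G_{\x}}[\fft g] \to -\mm \cdot \E^{\G_{\x}}[g]$ for all such $g$. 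Polynomials in linear statistics (equivalently bounded cylindrical functions on bounded regions) are dense in $L^{2}(\G_{\x})$; iterated application of the projection identity to the determinantal formulas for higher correlation functions extends the covariance-vanishing to this dense subspace. Together with the $L^2$-boundedness, this forces every weak subsequential limit of $\fft$ to equal the constant $-\mm$, yielding the stated weak convergence. The \textbf{main obstacle} is precisely this density step: the multi-point covariance calculations must be controlled uniformly in $T$, which requires careful handling of the rank-$\mm$ correction terms and uniform integrability in the boundary annulus $|z|^2 \asymp T$, where $g_{T}$ transitions from $O(1)$ to $0$.
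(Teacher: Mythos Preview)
Your approach is genuinely different from the paper's, and the first two parts---the mean computation and the $O(1)$ variance bound---are correct and close in spirit to what the paper itself does for the variance (via the projection-DPP formula and the Rider--Vir\'ag estimate).

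The paper's argument is structured quite differently. It first treats the special point $\x=\oo_{\mm}$, where Kostlan's theorem makes the squared moduli $\{|s_i|^2\}$ into \emph{independent} Gamma variables $\{Y_i\}_{i\ge \mm+1}$. Because $\fft$ depends only on $\{|s_i|^2\}$, any weak subsequential limit is measurable with respect to the tail $\sigma$-field of $\{Y_i\}$, and Kolmogorov's $0$--$1$ law forces it to be constant; the constant is then read off from the mean. The passage to a general $\x\in\C^{\mm}$ is obtained from the absolute continuity $\G_{\x}\sim\G_{\oo_{\mm}}$ (the first half of Theorem~1.1), via a short lemma showing that two mutually absolutely continuous measures share weak $L^2$-limits.

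The gap in your argument is exactly the step you flag as the main obstacle, and it is a real gap rather than a routine detail. You correctly show $\mathrm{Cov}^{\G_{\x}}(\fft,g)\to 0$ for a single linear statistic $g$ with compactly supported test function, but this alone does not pin down the weak limit: the paper records that $\var^{\G}(\fft)\to 1/2$, so strong convergence fails and the weak limit could a priori be a non-constant random variable that is orthogonal to all first-order linear statistics while living in the second or higher chaos. Extending the covariance-vanishing to products $g_1\cdots g_k$ amounts to showing that the joint cumulants $\kappa_{k+1}(\fft,g_1,\ldots,g_k)$ vanish in the limit; for a projection DPP these are sums of cyclic integrals of the form $\int g_T(z_0)h_1(z_1)\cdots h_k(z_k)\,\K^{\x}(z_0,z_1)\cdots\K^{\x}(z_k,z_0)\,d\mu^{\otimes(k+1)}$, and one needs an honest estimate for each $k$. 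The phrase ``iterated application of the projection identity'' does not supply this: there is no obvious induction reducing the order-$k$ case to the linear one. The paper's $0$--$1$-law device bypasses this difficulty entirely. If you do carry out the higher-cumulant bounds, your route would have the merit of not relying on the absolute-continuity theorem, giving a self-contained proof uniform in $\x$.
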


Suppose that the number $\mm$ of the conditioned particles is unknown. 
Then Theorem~\ref{l:13} implies one can detect the value $ \mm $ 
from the sample point $\xis$ almost surely. 
Namely, the total system of the sample point $\xis$ memorizes 
the missing number $ \mm $. 
This property is a remarkable contrast to the Poisson 
and canonical Gibbs measures, 
and implies that 
the Ginibre point process is similar to the periodic 
point process from this view point. 
We may thus regard the Ginibre point process as a random crystal. 
Similar phenomenon is also observed as rigidity in Ginibre point process
in \cite{Ghosh1,Ghosh2}, where 
it is shown that the conditioning of configuration outside of a disk 
determines the number of particles inside the disk. 
In \cite{HS} the problem of absolutely continuity of 
point processes is discussed in terms of deletion-insertion tolerance, 
and it is shown that the Gaussian zero process on the plane $\C$ is 
neither insertion tolerant nor deletion tolerant and 
that on the hyperbolic plane $\mathbb{D}=\{ |z|<1 \} $ is 
both insertion tolerant and deletion tolerant. 

The organization of this paper is as follows. 
In Section 2, we give the setting of this paper. 
In Section 3, we recall the notion of Palm measures and 
give remarks on Palm measures of determinantal point processes. 
In Section 4, we provide basic properties of Ginibre point process. 
In Section 5, we express the kernel of a continuous version of 
Palm measures in terms of Schur functions, and give an estimate of 
the difference between a correlation kernel and its Palm kernels. 
Section 6 deals with uniform estimate of variances for Ginibre and its
Palm measures by using the results in Section 5. 
In Section 7, we give proofs for 
the latter half of \tref{main1} and \tref{l:13}, which show singularity between 
the Ginibre point process and its palm measures. 
In Section 8, we give a sufficient condition for 
absolute continuity between two point processes 
that have an approximation sequence of finite point processes, 
and by applying it to our problem 
we prove the former half of \tref{main1} and \tref{l:21} in Section 9. 
In Section 10, we give concluding remarks with some open questions. 

\section{Setup} \label{s:2}

Let $R$ be a Polish space. A configuration $\xis$ in $R$ is 
a Radon measure of the form $\xis = \sum_i \delta_{s_i}$. 
Here $\delta_a$ denotes 
the delta measure at $a$ and $\{s_i\}$ is a countable sequence in $R$ 
such that $ \xis (K) < \infty $ for all compact set $ K $ in $R$. 
For a measurable function $f$, we denote 
$\bra \xis, f \ket = \int_R f(s) \xis(ds) = \sum_{i} f(s_i)$ 
whenever the right-hand side makes sense. 

We regard the zero measure as 
an empty configuration by convention, which describes the state that 
no particle exists. We remark that $\xis(A)$ becomes the number of
particles in a measurable set $A$. 
Let $Q=Q(R)$ be the set consisting of all such
configurations on $R$. We endow $Q$ with the vague topology, under which 
$Q$ is again a Polish space. 
A $Q$-valued random variable $\xis = \xis(\omega)$ is called 
a \textit{point process} or a \textit{random point field}. 
In what follows, we also refer to 
its probability distribution $\mu$ on $Q$ as a point process.  

We fix a Radon measure $\radon$ on $(R, \bcal(R))$ as a reference measure. 
We call a symmetric measure $\la_n$ 
on $R^n$ the $n$-th correlation measure if it satisfies 
\begin{align*}
\E\left[\prod_{i=1}^j {\xis(A_i)! \choose k_i!} k_i!
\right] 
&= \int_Q \prod_{i=1}^j \frac{\xis(A_i)!}{(\xis(A_i) - k_i)!} \mu(d\xis) \\
&= \la_n(A_1^{k_1} \times \cdots \times A_j^{k_j}).  
\end{align*}
Here $A_1, \dots, A_j \in \bcal(R)$  are disjoint and $k_1,\dots, k_j \in
\n$ such that $k_1+\cdots + k_j = n$. 
If $\xis(A_i) - k_i \le 0$, we interpret 
${\xis(A_i)!}/{(\xis(A_i) -k_i)!}=0$. 
Furthermore, if $\la_n$ is absolutely continuous with respect to 
$\radon^{\otimes n}$, the Radon-Nikodym derivative 
$\rho_n(x_1,\dots, x_n)$ is said to be 
the $n$-point correlation function with respect to $\radon$, i.e., 
\[
 \la_n(dx_1 \dots dx_n)
= \rho_n(x_1, \dots, x_n) \radon^{\otimes n}(dx_1 \dots dx_n). 
\] 

Let $K : R \times R \to \C$ be a Hermitian symmetric kernel such that 
the associated 
integral operator $(Kf)(x) := \int_R K(x,y)f(y)\radon(dy)$ becomes a locally
trace class operator on $L^2(R,\radon)$ 
satisfying $\text{Spec}(K) \subset [0,1]$. 
In other words, $0 \le  (Kf,f)_{L^2(R, \radon)}\le
(f,f)_{L^2(R,\radon)}$ for any $f \in L^2(R,\radon)$. 
It is known \cite{So,ST} that 
under these conditions on a pair $(K, \radon)$, 
there exists a unique point process $\mu = \mu_{K,\radon}$
such that its $n$-point correlation function
$\rho_n$ with respect to $\radon$ is given by 
\begin{equation}\notag
\rho_n(x_1,\dots, x_n) = \det[K(x_i,x_j)]_{1 \le i,j \le n}
\end{equation} 
for every $n \in \n$. 
We call it the {\it determinantal point process} (DPP) 
associated with $(K,\radon)$. 
We note that if we set
\begin{equation}
 \tilde{K}(x,y) = g(x)^{1/2} K(x,y) g(y)^{1/2}, \quad 
\tilde{\radon}(dx) = g(x)^{-1} \radon(dx)
\label{:gauge}
\end{equation}
for $g : R \to (0, \infty)$, 
then $(\tilde{K}, \tilde{\radon})$ defines the same DPP 
as that of $(K,\radon)$. 

In the rest of the paper we will take $R=\C$, and $Q$ denotes the
configuration space over $\C$. Let $\gsf(z) = \pi^{-1} e^{-|z|^2}$ and 
$\radon(dz) = \gsf(dz) := \gsf(z)dz$ be the Gaussian measure on $\C$. 
Let $\K : \C \times \C \to \C$ such that 
\begin{equation}\notag
 \K(z,w) = e^{z \wbar}. 
\end{equation}
Let $\G$ be the probability measure on $Q$ 
whose $n$-point correlation function $ \rho_n$ 
with respect to the Gaussian measure $\gsf$ is given by 
\begin{align}\notag
\rho_n (z_1,\ldots, z_n)= \det [\K (z_i,z_j)]_{1\le i,j \le n}
.\end{align}
The probability measure $\G$ is 
a DPP associated with $(\K, \gsf(dz))$ and 
called the Ginibre point process.  
$\G$ will denote the Ginibre point process in the rest of the paper. 

Let $\mm \in \n \cup \{0\}$.  
For $\x \in \C^{\mm}$ and 
a probability measure $\mu$ on $Q$, 
let $\mu_{\x }$ be the reduced Palm measure of $ \mu $ conditioned at
$\x  = (x_1,\ldots,x_{\mm })$: 
\begin{align}\label{:21b}&
\mu _{\x } = \mu (\cdot - \sum_{i=1}^{\mm} \delta _{x_i} | 
\xis(\{x_i\})\ge 1 \text{ for all }i). 
\end{align}
As we will see later, one can take the continuous version of $\mu_{\x}$ 
for DPPs in such a way that $\mu_{\x}$ 
is weakly continuous in $\x$. So $\mu_{\x}$ is defined for all $\x \in
\C^{\mm}$ with no ambiguity. 
If $\mm=0$, then we understand $\mu_{\x} = \mu$ 
for $\x \in \C^{0}$ by convention. 

The relation between the Ginibre point process 
and the two-dimensional Coulomb
potential with the inverse temperature $ \beta = 2 $ has been rigorously
established in \cite{o.isde} by using the notion of logarithmic derivative.

Let $ \mu ^1 $ be the 1-Campbell measure of a point process 
$ \mu $. By definition 
$ \mu ^1 $ is a measure on $ \mathbb{C}\times Q $ given by 
\begin{align}\notag
\mu ^1 (dxd\xis ) = \la_1 (dx) \mu _{x}(d\xis ) 
= \rho_1 (x) \radon(dx) \mu _{x}(d\xis ) 
,\end{align}
where $ \rho _1$ is the 1-correlation function of $ \mu $ 
with respect to the Lebesgue measure and $ \mu _{x} $ is 
the Palm measure conditioned at  $ x \in \mathbb{C}$. 
We call a function 
$ \mathsf{d}^{\mu } \in L_{\mathrm{loc}}^1(\mathbb{C}\times Q ,\mu^{1})$ 
a logarithmic derivative of $ \mu $ if 
\begin{align}\label{:21kk}&
- \int_{\mathbb{C}\times Q } \nabla _{x}  \varphi (x, \xis) \mu ^1 (dxd\xis ) = 
\int _{\mathbb{C}\times Q } 
\varphi (x,\xis ) \mathsf{d}^{\mu } (x,\xis ) 
\mu ^1 (dxd\xis )
\end{align}
for all  $ \varphi \in C_{0}^{\infty}(\mathbb{C})\otimes C_b(Q ) $,
where $C_b(Q)$ is the space of all bounded continuous functions on $Q$. 
It was proved in \cite[Th.\ 61]{o.isde} that 
\begin{align}\label{:21l}&
 \mathsf{d}^{\G} (x,\xis ) = \lim_{r\to\infty }
 \sum_{|s_i -x| < r } \frac{2(x-s_i)}{|x-s_i|^2} \quad \text{ in }
 L^2_{\mathrm{loc}}(\mathbb{C}\times Q , \G^1 ).
\end{align}
One may regard the conditions \eqref{:21kk} and \eqref{:21l} 
as a differential type of the DLR equation for Ginibre point process. 
We emphasize that this formulation is valid for more general setting. 
See the discussion in Section 10.

Note that there seems to exist no drift term coming from 
free potential in \eqref{:21l}. 
However, in the finite particle approximation of Ginibre ensemble, 
there exists an Ornstein-Uhlenbeck type center force $-2x$. 
We remark that the convergence in \eqref{:21l} is conditional convergence. 
This follows from the fact that the effect of the Coulomb interactions
at infinity is quite strong. 
In addition, we have a second representation of $ \mathsf{d}^{\G}$ 
\cite[(2.11)]{o.isde}: 
\begin{align}\notag
\mathsf{d}^{\G} (x,\xis ) = - 2x + \lim_{r\to\infty }
 \sum_{|s_i| < r } \frac{2(x-s_i)}{|x-s_i|^2} \quad \text{ in }
 L^2_{\mathrm{loc}}(\mathbb{C}\times Q, \G^1 )
,\end{align}
which one may expect from the $ \nn $-particle approximation \eqref{:11b} 
to the Ginibre point process $\G$. 
The similar phenomenon is also 
pointed out in \cite{CPPR} for Poisson point processes on
$\mathbb{R}^d$ with $d \ge 3$.

\section{Palm measures of determinantal point processes}

Recall that we always mean by Palm measures 
\textit{reduced} Palm measures.  

Let us restate the definition of Palm measures for simple point
processes. 
For a simple point process $\mu$ on $R$, the following formula defines 
Palm measures $\{\mu_x, x \in R\}$ for $\la_1$-a.e.$x$:  
for any bounded measurable function $F$ on $R \times Q$
\[
\int_Q \mu(d\xis) \int_R \xis(dx) F(x, \xis) 
= \int_R \la_1(dx) \int_Q \mu_x(d\xis) F(x, \xis + \delta_x),  
\]
where $\la_1$ is the $1$-correlation measure. 
If $\int_{\xis(U) \ge 2} \mu(d\xis) \xis(U) = o(\la_1(U))$ 
as $U \searrow \{x\}$, then 
\[
\lim_{U \searrow \{x\}} 
E[f \ | \ \xis(U) \ge 1] = \int_Q f(\xis) \mu_x(d\xis) \quad 
\la_1\text{-a.e.} x
\]
for any bounded measurable function $f$ on $Q$. 
Similarly, the formula 
\[
\int_Q \mu(d\xis) \int_{R^n} \xis^{\otimes n}(d\x) F(\x, \xis) 
= \int_{R^n} \la^{(n)}(d\x) \int_Q \mu_{\x}(d\xis) F(\x, \xis + 
\sum_{i=1}^n \delta_{x_i})
\]
for any bounded measurable function $F$ on $R^n \times Q$
defines Palm measures $\{\mu_{\x}, \x \in R^n\}$ 
for $\la^{(n)}$-a.e.$\x=(x_1,\dots,x_n)$, 
where $\la^{(n)}$ is the $n$-th moment measure. 
For distinct $\x = (x_1,\dots, x_n) \in R^n$, one can think that 
$\mu_{\x}$ is defined for $\la_n$-a.e.$\x$. 
It is well-known that Poisson point process $\Pi_{\nu}$ with 
intensity measure $\nu$ do not change 
by the operation of taking Palm measure, that is,   
$(\Pi_{\nu})_{\x} = \Pi_{\nu}$ for any $\x \in R^n$. 

The next fact shows that the Palm measures of a DPP are again DPPs. 
\begin{prop}[\cite{ST}] \label{l:31}
Let $\mu_K$ be a DPP associated with kernel $K(x,y)$. 
Set 
\begin{equation}
 K_b(x,y) = K(x,y) - \frac{K(x,b)K(b,y)}{K(b,b)}
\label{:31a}
\end{equation}
for $b \in R$ with $K(b,b)>0$. Then, for $\la_1$-a.e. $b \in R$, we have 
\begin{align}\notag
(\mu_K)_b = \mu_{K_b}
.\end{align}
Furthermore, for $\bb = (b_1,\dots, b_n) \in R^n$ 
such that $\det [K(b_i,b_j)]_{i,j=1}^n > 0$, 
set
\begin{align}\label{:31c}&
 K_{\bb}(x,y) = 
\frac{\det [K(p_i,q_j)]_{i,j=0}^n}{\det [K(b_i,b_j)]_{i,j=1}^n}, 
\end{align}
where $p_0=x, q_0 = y$ and $p_i = q_i = b_i$ for $i=1,2,\dots, n$. 
Then, for $\la_n$-a.e. $\bb \in R^n$, we have 
\begin{align}\notag
(\mu_K)_{\bb} = \mu_{K_{\bb}}
.\end{align}
\end{prop}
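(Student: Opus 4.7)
The strategy is to compute the correlation functions of the reduced Palm measure $(\mu_K)_{\bb}$ explicitly, verify that they coincide with those of the DPP with kernel $K_{\bb}$, and then invoke uniqueness of a DPP characterized by its correlation functions to identify the two probability measures.

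The first ingredient is the general formula expressing Palm correlation functions as ratios of correlation functions of the original process: for any simple point process $\mu$ on $R$ with $k$-th correlation function $\rho_k$ (with respect to $\radon$), and for $\la_n$-a.e. $\bb=(b_1,\dots,b_n)$, the reduced Palm measure $\mu_{\bb}$ has correlation functions
\begin{align*}
\rho_m^{\bb}(x_1,\dots,x_m) = \frac{\rho_{n+m}(b_1,\dots,b_n,x_1,\dots,x_m)}{\rho_n(b_1,\dots,b_n)},
\end{align*}
which is a direct consequence of the Campbell-type identity defining $\mu_{\bb}$. Substituting the determinantal expression $\rho_k=\det[K(\cdot,\cdot)]$ into the right-hand side and partitioning the $(n+m)\times(n+m)$ matrix in the numerator into four blocks with $B=[K(b_i,b_j)]_{n\times n}$, $X=[K(x_i,x_j)]_{m\times m}$, $P=[K(b_i,x_j)]_{n\times m}$, and $Q=[K(x_i,b_j)]_{m\times n}$, the Schur complement identity $\det M = \det B \cdot \det(X-QB^{-1}P)$ produces
\begin{align*}
\rho_m^{\bb}(x_1,\dots,x_m) = \det(X-QB^{-1}P).
\end{align*}

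To recognize this as $\det[K_{\bb}(x_i,x_j)]_{i,j=1}^m$, I would apply the Schur complement identity a second time, now to the $(n+1)\times(n+1)$ matrix appearing in the numerator of \eqref{:31c} with the scalar $K(x,y)$ as the $(0,0)$ entry. This yields the pointwise expression
\begin{align*}
K_{\bb}(x,y) = K(x,y) - \sum_{k,l=1}^n K(x,b_k)\,[B^{-1}]_{kl}\,K(b_l,y),
\end{align*}
from which $[K_{\bb}(x_i,x_j)]_{i,j=1}^m = X-QB^{-1}P$ is immediate, and the Palm correlation functions match those of $\mu_{K_{\bb}}$ term by term. The single-point formula \eqref{:31a} is precisely the case $n=1$, and iterating it one conditioning point at a time offers an alternative derivation of the general case.

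The main obstacle is to justify that $K_{\bb}$ is a legitimate DPP kernel, so that the uniqueness theorem of \cite{So,ST} applies to identify the Palm measure with $\mu_{K_{\bb}}$. Hermitian symmetry is inherited directly from $K$, but the spectral bound $0\le K_{\bb}\le 1$ and the local trace class property require a separate verification. The natural approach is to realize $K_{\bb}$ as a rank-$n$ perturbation of $K$ corresponding to the compression of $K$ onto the orthogonal complement of $\mathrm{span}\{K(\cdot,b_1),\dots,K(\cdot,b_n)\}$ inside the range of $K$, so that the operator-theoretic conditions for $K_{\bb}$ are inherited from those for $K$; this is where the assumption $\det[K(b_i,b_j)]>0$ enters to make the perturbation well-defined.
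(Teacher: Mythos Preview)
The paper does not prove this proposition at all: it is quoted verbatim from \cite{ST} and used as a black box, so there is no in-paper argument to compare against. Your proof is the standard one and is essentially correct: the Palm correlation identity $\rho_m^{\bb}=\rho_{n+m}(\bb,\cdot)/\rho_n(\bb)$ together with the Schur complement formula gives exactly $\det[K_{\bb}(x_i,x_j)]$, and the uniqueness theorem for DPPs then identifies $(\mu_K)_{\bb}$ with $\mu_{K_{\bb}}$.

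One small point worth tightening: your description of $K_{\bb}$ as ``the compression of $K$ onto the orthogonal complement of $\mathrm{span}\{K(\cdot,b_i)\}$ inside the range of $K$'' is the right intuition but is only literally correct when $K$ is a projection. In general $K_{\bb}=K-KP$ where $P$ is the orthogonal projection onto that span in $L^2(\radon)$, which is not a compression but still manifestly satisfies $0\le K_{\bb}\le K\le 1$ since $K_{\bb}=K^{1/2}(I-P)K^{1/2}$ after noting that $K(\cdot,b_i)$ lies in the range of $K^{1/2}$. This is a cosmetic fix; the argument goes through.
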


As we have seen in \pref{l:31}, 
Palm measures $\{\mu_{\bb}\}$ are defined for $\la_{n}$-a.e.
$\bb \in R^n$, i.e., for $\bb = (b_1,\dots,b_n)$ 
such that $\det [K(b_i, b_j)]_{i,j=1}^n >0$. 
The next lemma shows that if the kernel $K$ is smooth one can take 
a continuous version of Palm measures and define $\mu_{\bb}$ 
even when $\det [K(b_i, b_j)]_{i,j=1}^n =0$.

For later use, we only consider the case where DPPs on $\C$
associated with an analytic, Hermitian kernel $K$. 
Other cases can also be discussed in the same manner. 
\begin{lem}\label{conti-version} 
Let $\mu = \mu_{K}$ be a DPP on $\C$ generated by a Hermitian 
kernel $K : \C \times \C \to \C$ which is analytic in the first variable
 (and thus anti-analytic in the second) 
and a Radon measure $\radon$ with positive smooth
 density. Then, we can take a continuous version of Palm measures 
$\{\mu_{\x}\}_{\x \in \C^n}$ 
even when $\det (K(x_i, x_j))_{i,j=1}^n$ is 
vanishing at some points $\x=(x_1,\dots,x_n)$ in $\C^n$. 
\end{lem}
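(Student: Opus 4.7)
The plan is to construct the continuous family $\bb \mapsto \mu_{\bb}$ by extending the Palm kernel $K_{\bb}$ of \pref{l:31} smoothly to all of $\C^n$ and then letting $\mu_{\bb}$ be the DPP generated by this extension. Starting from the quotient
\[
K_\bb(x,y) \;=\; \frac{\det[K(p_i,q_j)]_{i,j=0}^{n}}{\det[K(b_i,b_j)]_{i,j=1}^{n}},
\]
I would first invoke a spectral expansion $K(x,y) = \sum_{k} \phi_k(x)\overline{\phi_k(y)}$ whose eigenfunctions $\phi_k$ are analytic in $x$ (available because $K$ is Hermitian, locally trace class, and analytic in its first variable). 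The Cauchy--Binet identity then yields
\[
\det[K(b_i,b_j)]_{i,j=1}^{n} \;=\; \sum_{k_1<\cdots<k_n} \bigl|\det[\phi_{k_l}(b_i)]_{i,l=1}^n\bigr|^2,
\]
and each inner determinant is holomorphic in $\bb$ and vanishes on every diagonal $\{b_i = b_j\}$, hence is holomorphically divisible by the Vandermonde $\Delta(\bb) = \prod_{i<j}(b_i - b_j)$. A parallel Cauchy--Binet expansion of the $(n+1)\times(n+1)$ numerator, separating the factor indexed by $(x,b_1,\dots,b_n)$ from the one indexed by $(y,b_1,\dots,b_n)$, shows that the numerator likewise carries $|\Delta(\bb)|^2$ as a factor (together with the further factor $\prod_i (x-b_i)\overline{(y-b_i)}$, which reflects the expected vanishing of the Palm kernel at the conditioned points).

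After cancelling the common $|\Delta(\bb)|^2$, the quotient $K_\bb(x,y)$ becomes a ratio of jointly real-analytic functions of $(x,y,\bb,\bar\bb)$. Provided the reduced denominator is strictly positive on $\C^n$, the resulting kernel $\widetilde{K}_\bb(x,y)$ is smooth in $\bb$, Hermitian, and positive semidefinite (these properties extend by continuity from the open set on which the original formula is valid, using that $k$-point correlation functions are themselves determinantal minors of $\widetilde{K}_\bb$). I would then define $\mu_\bb$ for every $\bb \in \C^n$ to be the DPP associated with $(\widetilde{K}_\bb,\radon)$. Since all correlation functions $\det[\widetilde{K}_\bb(x_i,x_j)]$ depend continuously on $\bb$, and the determinantal structure supplies uniform local factorial-moment bounds, this yields weak continuity of $\bb \mapsto \mu_\bb$ on all of $\C^n$.

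The step I expect to be the main obstacle is verifying that the reduced denominator is strictly positive everywhere: this amounts to showing that the Vandermonde accounts for \emph{every} vanishing of $\det[K(b_i,b_j)]_{i,j=1}^n$, with no accidental simultaneous zeros of the reduced Cauchy--Binet sum $\sum_{k_1<\cdots<k_n}|\det[\phi_{k_l}(b_i)]/\Delta(\bb)|^2$. This reduces to the separating-points property of the analytic eigenfunctions $\phi_k$, which is where the analyticity hypothesis on $K$ plays its essential role. In the case of the Ginibre kernel, where the $\phi_k$ are normalized monomials and the Cauchy--Binet sum becomes a positive series of Schur-polynomial squares, this positivity is immediate; the general analytic case is handled in the same way once the $\phi_k$ are taken as analytic eigenfunctions of the integral operator associated with $(K,\radon)$.
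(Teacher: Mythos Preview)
Your Cauchy--Binet/Vandermonde approach is natural and in fact parallels what the paper carries out later in \sref{s:50} for the Ginibre kernel specifically (see \lref{l:91} and \rref{rem:zx}). But the step you flag as the main obstacle is a genuine gap, not a technicality.

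Dividing through by $|\Delta(\bb)|^2$ removes only the vanishing of $\det[K(b_i,b_j)]$ along the diagonals $\{b_i=b_j\}$. The lemma must also cover points where the determinant vanishes for other reasons---most simply, when $n=1$: then the denominator is $K(b,b)$ and there is no Vandermonde to cancel. Nothing in the hypotheses forces $K(b,b)>0$. For instance $K(z,w)=e^{z\bar w}-1$ (the Palm kernel $\K_0$ of \eref{version}) is analytic in $z$, Hermitian, with spectrum in $[0,1]$, yet $K(0,0)=0$; its eigenfunctions are $\phi_k(z)=z^k/\sqrt{k!}$ for $k\ge 1$, all of which vanish at $0$. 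So the ``separating-points property of the analytic eigenfunctions'' you invoke simply fails in general, and with it the strict positivity of your reduced denominator. For $n\ge 2$ the same problem persists at points like $\bb=(b,\dots,b)$ with $K(b,b)=0$.

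The paper's proof takes a different, local route that handles exactly this: when $K(b,b)=0$ it factors $K(z,w)=(z-b)^p\overline{(w-b)}^{\,p}L(z,w)$ with $L(b,b)>0$, checks that the Palm kernel inherits the same prefactor, $K_x(z,w)=(z-b)^p\overline{(w-b)}^{\,p}L_x(z,w)$, and then defines the continuous extension via $L_b$ (now well defined since $L(b,b)>0$). The point is that one must pull out the common zeros of the kernel \emph{itself}, not just the Vandermonde coming from colliding conditioning points. Your argument could plausibly be repaired by first performing this factorization to reduce to a kernel whose eigenfunctions have no common zero, and only then running the Cauchy--Binet cancellation; as written, though, the positivity claim is unjustified.
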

\begin{proof}
For simplicity, we only show the case where $n=1$. 
Suppose that $K(b, b)=0$. Since $K$ is analytic and Hermitian, 
there exist a $p\in \n$ and an analytic kernel $L$  
with $L(b, b) > 0$ such that 
$K(z,w) = L(z,w) (z-b)^p \overline{(w-b)^p}$. 
It is easy to see that the Palm kernel $K_x$ 
for $\mu_x \ (x \in \C)$ is of the form 
$K_x(z,w) = L_x(z,w) (z-b)^p \overline{(w-b)^p}$ when $x \not= b$. 
Hence, as $x \to b$, DPP $\mu_x$ converges weakly to DPP $\mu_b$ 
with kernel $K_b(z,w):=L_b(z,w) (z-b)^p \overline{(w-b)^p}$. 
\qed\end{proof}

\begin{ex}\label{version}
For the Ginibre point process $\G$ with kernel $\K(z, w) = e^{z \wbar}$, 
by \pref{l:31}, 
we can take a continuous version of Palm
 measures $\{\G_{\alpha}, \alpha \in \C\}$ 
since $\K(\alpha, \alpha) = e^{|\alpha|^2} > 0$. 
In particular, $\G_0$ is the DPP with kernel 
$\K_0(z,w) = e^{z\wbar} -1$. 
Since $\K_0(0,0)=0$, we cannot define the kernel $(\K_0)_0$ 
by using \eqref{:31a}. 
However, by \lref{conti-version}, 
we can take a continuous version $\{\G_{\x}, \x \in \C^2\}$. 
The kernel $\K_{\x}$ is defined by \eqref{:31c} when $\x \in \C^2$ is not a
 diagonal element and by \lref{conti-version} otherwise; in particular,  
\begin{align}\notag
\K_{(0,0)}(z,w) = e^{z \wbar} - 1 - z \wbar.  
\end{align}
Repeating this procedure, we can easily see that 
for $\oo_{\mm} =(0,0,\dots,0) \in \C^{\mm}$ 
the Palm measure $\G_{\oo_{\mm}}$ can be taken as 
the DPP associated with kernel 
\begin{align}\label{:31f}&
 \K_{\oo_{\mm}}(z,w)  = \sum_{k=\mm}^{\infty}
 \frac{(z\wbar)^{k}}{k!}. 
\end{align}
Palm kernel $\K_{\x}$ conditioned at $\x$ 
will be given in terms of Schur functions in \lref{l:91}. 
\end{ex}

\begin{rem}
(1) For DPPs, the diagonal $K(z,z) (= \rho_1(z))$ is the density 
(w.r.t. $\radon$) of points at $z$. 
It is obvious from \eqref{:31a} that $K_x(z,z) \le K(z,z)$, which
 implies that taking Palm measure decreases the density. 
By induction, we see that, for any $\x$ and $z \in R$, we have 
\begin{equation}
 K_{\x}(z,z) \le K(z,z), 
\label{decrease}
\end{equation}
i.e., $\rho_{1,\x}(z) \le \rho_1(z)$. \\
(2) The trivial inequality $\rho_2(z,w) \ge 0$ for DPPs 
implies the Schwarz inequality 
\begin{equation}
 |K(z,w)|^2 \le K(z,z) K(w,w). 
\label{schwarz} 
\end{equation}
\end{rem}

The following variance formulas are useful 
for deriving small fluctuation properties of $\G$ and 
its Palm measures. 

\begin{lem}\label{l:35}
Let $\mu_{K,\radon}$ be a DPP associated with $(K,\radon)$. 
Then, 
\begin{align}
 \var^{\mu_{K,\radon}}(\bra \xis, g\ket) 
&= \int_R |g(z)|^2 K(z,z) \radon(dz) - \int_{R^2} g(z) \overline{g(w)}
 |K(z,w)|^2 \radon(dz)\radon(dw),  
\label{var0} 
\end{align}
where $\bra \xis, g \ket = \int_R g(x)\xis(dx)$. 
Moreover, suppose that the kernel $K$ has reproducing property, i.e., 
\begin{equation}
\int_R K(z,u) K(u,w) \radon(du) = K(z,w). 
\label{reproducing} 
\end{equation}
Then, the following also holds: 
\begin{align}
 \var^{\mu_{K,\radon}}(\bra \xis, g\ket) 
&= \frac{1}{2} \int_{R^2} |g(z) - g(w)|^2 |K(z,w)|^2 \radon(dz) \radon(dw). 
\label{var} 
\end{align}
\end{lem}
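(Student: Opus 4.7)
The plan is to compute $\var^{\mu_{K,\radon}}(\bra\xis,g\ket) = \E[|\bra\xis,g\ket|^2]-|\E[\bra\xis,g\ket]|^2$ directly in terms of the one- and two-point correlation functions, which for a DPP are $\rho_1(z)=K(z,z)$ and $\rho_2(z,w)=K(z,z)K(w,w)-|K(z,w)|^2$. Splitting $|\bra\xis,g\ket|^2=\sum_i|g(s_i)|^2+\sum_{i\ne j}g(s_i)\overline{g(s_j)}$ and taking expectations gives
\[
\E[|\bra\xis,g\ket|^2]=\int_R|g(z)|^2 K(z,z)\,\radon(dz)+\int_{R^2}g(z)\overline{g(w)}\rho_2(z,w)\,\radon(dz)\radon(dw).
\]
On the other hand, since $K(w,w)$ is real (Hermitian kernel), $|\E[\bra\xis,g\ket]|^2=\int_{R^2}g(z)\overline{g(w)}K(z,z)K(w,w)\,\radon(dz)\radon(dw)$. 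Substituting the DPP formula for $\rho_2$ into the expansion produces the $K(z,z)K(w,w)$-part that cancels exactly against $|\E[\bra\xis,g\ket]|^2$, leaving \eqref{var0}.

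For \eqref{var}, I would invoke the reproducing property \eqref{reproducing} at $z=w$ to obtain $K(z,z)=\int_R|K(z,u)|^2\,\radon(du)$. Plugging this into the first term of \eqref{var0} yields
\[
\int_R|g(z)|^2 K(z,z)\,\radon(dz)=\int_{R^2}|g(z)|^2|K(z,w)|^2\,\radon(dz)\radon(dw).
\]
Using the symmetry $|K(z,w)|^2=|K(w,z)|^2$, this equals $\frac{1}{2}\int_{R^2}(|g(z)|^2+|g(w)|^2)|K(z,w)|^2\,\radon(dz)\radon(dw)$. The same symmetry shows that $\int_{R^2}g(z)\overline{g(w)}|K(z,w)|^2\,\radon(dz)\radon(dw)$ equals its own complex conjugate, so it coincides with $\frac{1}{2}\int_{R^2}(g(z)\overline{g(w)}+\overline{g(z)}g(w))|K(z,w)|^2\,\radon(dz)\radon(dw)$. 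Subtracting and recognizing $|g(z)-g(w)|^2=|g(z)|^2+|g(w)|^2-g(z)\overline{g(w)}-\overline{g(z)}g(w)$ gives \eqref{var}.

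There is no real obstacle here; the argument is a bookkeeping exercise. The only subtlety worth being careful about is the complex conjugation: one has to verify that $\int g(z)\overline{g(w)}|K(z,w)|^2$ is real (which follows from swapping $z$ and $w$), and one must use that $K(z,z)$ is real for the cancellation in the first step. I would also tacitly assume integrability of $g$ with respect to $K(z,z)\,\radon(dz)$ and of $|g(z)-g(w)|^2|K(z,w)|^2$ on $R^2$, so that all double integrals and rearrangements by Fubini are justified.
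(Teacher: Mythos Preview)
Your argument is correct and is exactly the standard computation one would expect; the paper itself does not spell out any details but simply asserts that \eqref{var0} and \eqref{var} are easy and refers to \cite{OS}. Your write-up therefore supplies precisely what the paper omits, following the same route.
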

\begin{proof}
It is easy to see that \eqref{var0} and \eqref{var} hold. 
(see cf. page 195 \cite{OS}.)
\qed\end{proof}

Reproducing property is preserved by the operation of 
taking Palm measures. 
\begin{lem}\label{l:reproducing} 
Let $K$ be a kernel with reproducing property (\ref{reproducing}). 
Then, so is $K_{\x}$ for $\la_n$-a.e. $\x \in R^n$. 
\end{lem}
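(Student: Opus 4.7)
The plan is to reduce the statement to the case $n=1$ and then verify it by a direct expansion.

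First, by the chain rule for reduced Palm measures, $(\mu_K)_{(b_1,\dots,b_n)} = ((((\mu_K)_{b_1})_{b_2})\cdots)_{b_n}$, valid for $\la_n$-a.e. $\bb \in R^n$. Combined with \pref{l:31}, this yields the iterative identity
\begin{align*}
K_{(b_1,\dots,b_n)} = \bigl(\cdots\bigl((K_{b_1})_{b_2}\bigr)\cdots\bigr)_{b_n},
\end{align*}
well-defined for $\la_n$-a.e.\ $\bb$ (namely on the full-measure set where $\det[K(b_i,b_j)]_{i,j=1}^n > 0$, which ensures all intermediate denominators are nonzero). So if the $n=1$ case is known, then an induction on $n$ gives the result for general $n$.

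For the $n=1$ case, I would simply plug the Palm-kernel formula \eqref{:31a} into the left-hand side of \eqref{reproducing} for $K_b$ and expand:
\begin{align*}
\int_R K_b(z,u) K_b(u,w) \radon(du)
&= \int_R \Bigl(K(z,u) - \tfrac{K(z,b)K(b,u)}{K(b,b)}\Bigr)\Bigl(K(u,w) - \tfrac{K(u,b)K(b,w)}{K(b,b)}\Bigr)\radon(du).
\end{align*}
This splits into four integrals, each of which is evaluated by applying the reproducing property of $K$: the first gives $K(z,w)$, the second $\frac{K(z,b)K(b,w)}{K(b,b)}$, the third $\frac{K(z,b)K(b,w)}{K(b,b)}$, and the fourth $\frac{K(z,b)K(b,w)}{K(b,b)^2} \cdot K(b,b)$. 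Summing, the cross terms cancel two-for-one against the last term and leave exactly $K(z,w) - \frac{K(z,b)K(b,w)}{K(b,b)} = K_b(z,w)$, as desired.

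There is no real obstacle beyond bookkeeping; the main thing to justify carefully is the iterative identity for Palm kernels and the observation that the exceptional set (where $\det[K(b_i,b_j)] = 0$) is $\la_n$-negligible, so that the induction proceeds through all finitely many steps on a full-measure set in $R^n$.
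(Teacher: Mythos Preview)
Your proof is correct and follows essentially the same approach as the paper: reduce to the case $n=1$ by iterating the Palm construction, then verify the reproducing property for $K_b$ by directly expanding \eqref{:31a} and applying \eqref{reproducing} term by term. The paper's proof is terser (it simply asserts that \eqref{:31a} and \eqref{reproducing} yield the result), but the underlying computation is exactly the one you wrote out.
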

\begin{proof}
It suffices to show it for $n=1$. 
 \eqref{:31a} and \eqref{reproducing} yield the reproducing property 
for $K_x$. 
\qed\end{proof}

\section{Ginibre point process and its basic property}\label{s:4}
In this section, we summarize the basic properties of 
Ginibre point process. 

\begin{prop}[\cite{G}]\label{ginibre}
Let $A_{\nn}$ be an $\nn \times \nn$ matrix with i.i.d. standard complex 
Gaussian entries, i.e., $(A_{\nn})_{ij} \sim N_{\C}(0,1)$. 
Then the joint probability density of $\nn$ eigenvalues are given by 
\begin{equation}
 p^{\nn}(\s) = \frac{1}{\prod_{k=1}^{\nn} k!} 
\prod_{1 \le i<j \le \nn} |s_i - s_j|^2 
\notag\label{:40a}
\end{equation}
 for $\s=(s_1,s_2,\dots,s_{\nn}) \in \C^{\nn}$ 
with respect to the Gaussian measure 
$\gggn (d\s) = \prod_{i=1}^{\nn} \gsf(ds_i)$. 
Equivalently, the set of eigenvalues of $A_{\nn}$ forms 
the DPP $\G^{\nn}$ associated with $\gsf(dz)$ and 
\begin{align}\label{:40b}&
 \K^{\nn}(z,w) = \sum_{k=0}^{\nn-1} \frac{(z \wbar)^{k}}{k!}
.\end{align}
\end{prop}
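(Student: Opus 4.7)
The plan is to derive the joint eigenvalue density via a Schur decomposition of $A_{\nn}$ and then recognize the resulting density as that of a determinantal point process. First I would write $A_{\nn} = U(D + N)U^{*}$, where $U$ is unitary, $D = \mathrm{diag}(s_1,\dots,s_{\nn})$ collects the eigenvalues, and $N$ is strictly upper triangular. Since the Ginibre law $\pi^{-\nn^2} e^{-\tr A_{\nn}A_{\nn}^{*}} dA_{\nn}$ is unitarily invariant, it depends on the new variables only through
\begin{equation*}
\tr A_{\nn}A_{\nn}^{*} = \sum_{i=1}^{\nn} |s_i|^2 + \|N\|_{\mathrm{HS}}^2.
\end{equation*}
The key computation is the Jacobian of the map $A_{\nn} \mapsto (D,N,U)$, which contributes a factor $\prod_{1\le i<j\le \nn}|s_i - s_j|^2$ arising from the nonabelian change of variables on the unitary factor. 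Integrating out the Gaussian variable $N$ (whose contribution is a universal constant independent of $\s$) and the unitary factor $U$ modulo its diagonal then yields the joint density $p^{\nn}(\s)$ proportional to $\prod_{i<j}|s_i - s_j|^2 \prod_{i=1}^{\nn} e^{-|s_i|^2}$ with respect to Lebesgue measure on $\C^{\nn}$.

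Next I would pin down the normalization constant $\prod_{k=1}^{\nn} k!$ directly via Andreief's identity. Writing the Vandermonde as $\det[s_i^{j-1}]_{i,j=1}^{\nn}$, one has
\begin{equation*}
\int_{\C^{\nn}} \prod_{i<j}|s_i - s_j|^2 \, \gggn(d\s) = \nn!\, \det\!\left[\int_{\C} z^{i-1}\bar{z}^{j-1}\,\gsf(dz)\right]_{i,j=1}^{\nn}.
\end{equation*}
The matrix inside the determinant is diagonal with entries $(i-1)!$ by the elementary moment computation for the complex Gaussian, hence the right-hand side equals $\nn!\prod_{k=0}^{\nn-1} k! = \prod_{k=1}^{\nn} k!$. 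This confirms the density \eqref{:40a}.

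Finally, to upgrade the joint density to the DPP statement, I would invoke the standard biorthogonal-ensemble principle: a symmetric density of the form $\frac{1}{\nn!}\bigl|\det[\phi_{j-1}(s_i)]_{i,j=1}^{\nn}\bigr|^2$ with $\{\phi_j\}_{j=0}^{\nn-1}$ orthonormal in $L^2(\C,\gsf)$ is the density of a DPP whose correlation kernel is the reproducing kernel $\sum_{j=0}^{\nn-1} \phi_j(z)\overline{\phi_j(w)}$; the $k$-point correlation functions then emerge as minors by the Cauchy--Binet formula. Here, the monomials $\phi_j(z) = z^j/\sqrt{j!}$ are orthonormal in $L^2(\C,\gsf)$ by the same Gaussian moment computation used above, so the correlation kernel is exactly $\K^{\nn}(z,w) = \sum_{k=0}^{\nn-1}(z\wbar)^k/k!$, as claimed in \eqref{:40b}. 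The main obstacle is the Jacobian computation for the Schur decomposition, which is a delicate but classical piece of matrix analysis; once the factor $\prod_{i<j}|s_i-s_j|^2$ has been extracted, all remaining steps reduce to linear-algebraic bookkeeping and the standard correspondence between orthogonal polynomial ensembles and determinantal point processes.
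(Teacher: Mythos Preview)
The paper does not supply a proof of this proposition; it is stated with a citation to Ginibre's original paper \cite{G} and used as a black box. Your argument---Schur decomposition to extract the Vandermonde Jacobian, Andreief's identity to fix the normalization, and the orthogonal-polynomial-ensemble mechanism to identify the correlation kernel---is correct and is essentially the classical derivation. There is nothing to compare against in the paper itself.
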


Since $\K^{\nn}(z,w) \to \K(z,w)$ uniformly on any compact set in 
$\C \times \C $, the DPP $\G^{\nn}$ associated with $(\K^{\nn}, \gsf)$ 
converges weakly to the DPP $\G$ associated with $(\K, \gsf)$ 
(see e.g. Proposition~3.1 in \cite{ST}). 

\begin{prop}
Let $\x = (x_1, \dots, x_{\mm}) \in \C^{\mm}$ and 
\[
\ZZ(\G_{\x}^{\nn}) = 
\int_{\C^{\nn}}\prod_{i=1}^{\nn} |\x - s_i|^2 
|\Delta(\s)|^2 \gggn (d\s), 
\]
where $\Delta(\s)$ is the Vandermonde determinant as in \tref{l:21}. 
Then we have 
\begin{align}
\lim_{\nn \to \infty} 
\frac{\ZZ(\G_{\x}^{\nn})}{\ZZ(\G_{\y}^{\nn})} 
= \left|\frac{\Delta(\y)}{\Delta(\x)}\right|^2 \cdot 
\frac{\det[\K(x_i,x_j)]_{i,j=1}^{\mm}}{\det [\K(y_i,y_j)]_{i,j=1}^{\mm}} 
\label{:42c}&
\end{align}
for $\x, \y \in \C^{\mm}$ with $\Delta(\x)\not= 0$ and $\Delta(\y) \not= 0$. 
\end{prop}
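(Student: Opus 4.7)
The plan is to recognize $\ZZ(\G^{\nn}_{\x})$, up to constants, as the $\mm$-point correlation function of an enlarged Ginibre ensemble, and then pass to the limit using the uniform convergence of the kernels already recorded after \pref{ginibre}.

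The key algebraic tool is the Vandermonde splitting
\begin{align*}
|\Delta(\x,\s)|^2 = |\Delta(\x)|^2 \, |\Delta(\s)|^2 \prod_{i=1}^{\nn}\prod_{j=1}^{\mm} |x_j - s_i|^2 = |\Delta(\x)|^2 \, |\Delta(\s)|^2 \prod_{i=1}^{\nn}|\x - s_i|^2,
\end{align*}
where $(\x,\s) \in \C^{\mm+\nn}$ denotes concatenation. Substituting this into the definition of $\ZZ(\G^{\nn}_{\x})$ gives
\begin{align*}
\ZZ(\G^{\nn}_{\x}) = \frac{1}{|\Delta(\x)|^2} \int_{\C^{\nn}} |\Delta(\x,\s)|^2 \, \gggn(d\s).
\end{align*}

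Next I would recognize the right-hand integral via \pref{ginibre}: the joint density of the Ginibre ensemble of size $\nn+\mm$ with respect to $\gsf^{\otimes(\nn+\mm)}$ equals $\bigl(\prod_{k=1}^{\nn+\mm}k!\bigr)^{-1}|\Delta(\x,\s)|^2$, and integrating out $\nn$ variables produces the $\mm$-point correlation function of $\G^{\nn+\mm}$. Concretely,
\begin{align*}
\int_{\C^{\nn}} |\Delta(\x,\s)|^2 \, \gggn(d\s) = \frac{\nn!\,\prod_{k=1}^{\nn+\mm} k!}{(\nn+\mm)!}\,\rho_{\mm}^{\nn+\mm}(\x) = C_{\nn,\mm}\,\det\bigl[\K^{\nn+\mm}(x_i,x_j)\bigr]_{i,j=1}^{\mm},
\end{align*}
where $C_{\nn,\mm}$ is a combinatorial constant depending only on $\nn$ and $\mm$, and where I have used that the $\mm$-point correlation function of $\G^{\nn+\mm}$ with respect to $\gsf$ is $\det[\K^{\nn+\mm}(x_i,x_j)]_{i,j=1}^{\mm}$.

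Taking the ratio for $\x$ and $\y$ with $\Delta(\x), \Delta(\y) \neq 0$ cancels $C_{\nn,\mm}$ exactly, yielding
\begin{align*}
\frac{\ZZ(\G^{\nn}_{\x})}{\ZZ(\G^{\nn}_{\y})} = \left|\frac{\Delta(\y)}{\Delta(\x)}\right|^{2} \cdot \frac{\det\bigl[\K^{\nn+\mm}(x_i,x_j)\bigr]_{i,j=1}^{\mm}}{\det\bigl[\K^{\nn+\mm}(y_i,y_j)\bigr]_{i,j=1}^{\mm}}
\end{align*}
for all $\nn$, already in a form from which the limit is immediate: since $\K^{\nn+\mm}(z,w)\to \K(z,w)$ uniformly on compact subsets of $\C\times\C$, each entry of the finite-size kernel matrix converges to the corresponding entry of $[\K(x_i,x_j)]$, and the denominator stays bounded away from zero because $\det[\K(y_i,y_j)]>0$ whenever $\Delta(\y)\neq 0$. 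The only step that requires any care is tracking the combinatorial prefactor in the Vandermonde/correlation-function identification, but as everything depends only on $\nn$ and $\mm$ it drops out of the ratio, leaving no serious obstacle.
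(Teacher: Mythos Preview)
Your proof is correct and follows essentially the same route as the paper: factor the joint Vandermonde, identify the resulting integral with the $\ell$-point correlation function of $\G^{\nn+\ell}$, express that as $\det[\K^{\nn+\ell}(x_i,x_j)]$, take the ratio to kill the combinatorial prefactor, and pass to the limit via uniform convergence of the kernels. (Incidentally, your combinatorial constant $\nn!/(\nn+\ell)!$ is the correct one; the paper's displayed $(\ell+\nn)!/\ell!$ appears to be a typo, but as you note it cancels in the ratio anyway.)
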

\begin{proof}
By Proposition~\ref{ginibre}, 
for $\x \in \C^{\mm}$ and $\s \in \C^{\nn}$, 
\begin{equation}
p^{\mm+\nn}(\x, \s)
= \frac{1}{\prod_{k=1}^{\mm+\nn} k!} 
|\Delta(\x)|^2
\prod_{i=1}^{\nn} |\x - s_i|^2 
|\Delta(\s)|^2
\label{joint}
\end{equation}
is the joint probability density function for eigenvalues of 
Ginibre random matrix of 
size $\mm+\nn$ with respect to the Gaussian measure 
$\gggm (d\x) \gggn (d\s)$. 
Then, the $\mm$-point correlation function of $\G^{\mm+\nn}$ is given by 
\begin{align*}
\rho_{\mm}^{(\mm+\nn)}(\x) 
&
= \frac{(\mm+\nn)!}{\mm!} \int_{\C^{\nn}} 
p^{\mm+\nn}(\x, \s) \gggn (d\s)\\
&= \frac{(\mm+\nn)!}{\mm!} 
\frac{|\Delta(\x)|^2}{\prod_{k=1}^{\mm+\nn} k!} 
\int_{\C^{\nn}} \prod_{i=1}^{\nn} |\x - s_i|^2 
|\Delta(\s)|^2 \gggn (d\s) \\
&= \frac{(\mm+\nn)!}{\mm!} \frac{|\Delta(\x)|^2}{
\prod_{k=1}^{\mm+\nn} k!} \ZZ(\G_{\x}^{\nn}) 
.\end{align*}
On the other hand, since $\G^{\mm+\nn}$ is the DPP associated with
 $\K^{\mm+\nn}$, we have 
\[
 \rho_{\mm}^{(\mm+\nn)}(\x) 
= \det[\K^{\mm+\nn}(x_i,x_j)]_{i,j=1}^{\mm},  
\]
where $\K^{\mm+\nn}$ is as in \eqref{:40b}. 
Hence, we obtain 
\begin{equation}
\frac{\ZZ(\G_{\x}^{\nn})}{\ZZ(\G_{\y}^{\nn})}
= \left|\frac{\Delta(\y)}{\Delta(\x)}\right|^2
\frac{ \det[
 \K^{\mm+\nn}(x_i,x_j)]_{i,j=1}^{\mm}}{
\det[\K^{\mm+\nn}(y_i,y_j)]_{i,j=1}^{\mm}}
\to 
\left|\frac{\Delta(\y)}{\Delta(\x)}\right|^2
\frac{\det[\K(x_i,x_j)]_{i,j=1}^{\mm}}{\det[ \K(y_i,y_j)]_{i,j=1}^{\mm}} 
\label{:42e}
\end{equation}
as $\nn \to \infty$. 
\qed\end{proof}
When $\Delta(\x) =0$, for example, $\x = \oo_{\mm}$, 
\eqref{:42c} should be understood by using the following 
(see \sref{s:50} for more general cases). 
\begin{lem}\label{Phiz}
Let $\x = (x_1, \dots, x_{\mm}) \in \C^{\mm}$. 
For every $\nn \in \n$, 
\[
\lim_{\x \to \oo_{\mm}} \frac{
\det[
 \K^{\mm+\nn}(x_i,x_j)]_{i,j=1}^{\mm}}{|\Delta(\x)|^2}
= \lim_{\x \to \oo_{\mm}} \frac{
\det[
 \K(x_i,x_j)]_{i,j=1}^{\mm}}{|\Delta(\x)|^2}
= \prod_{k=0}^{\mm-1} \frac{1}{k!}. 
\]
\end{lem}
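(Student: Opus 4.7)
The plan is to use the Cauchy–Binet formula to expand both determinants as a sum of squared minors of a Vandermonde-like matrix, with a common factor $|\Delta(\x)|^2$ that cancels cleanly.

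First I would write
\[
\K(z,w) = \sum_{k=0}^{\infty} \frac{z^k \overline{w}^k}{k!}, \qquad
\K^{\mm+\nn}(z,w) = \sum_{k=0}^{\mm+\nn-1} \frac{z^k \overline{w}^k}{k!},
\]
so that $[\K(x_i,x_j)]_{i,j=1}^{\mm} = V D V^{*}$, where $V$ is the (formally $\mm \times \infty$) matrix with entries $V_{i,k} = x_i^{k}$ for $k=0,1,2,\ldots$ and $D$ is the diagonal matrix with entries $1/k!$. The truncated kernel $\K^{\mm+\nn}$ corresponds to cutting $V$ and $D$ at column $\mm+\nn-1$.

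Next I would apply Cauchy–Binet: for any index set $S \subset \{0,1,2,\ldots\}$ of size $\mm$, let $V_S$ denote the $\mm \times \mm$ submatrix with columns indexed by $S$. Then
\[
\det [\K(x_i,x_j)]_{i,j=1}^{\mm}
= \sum_{|S|=\mm} |\det V_S|^{2} \prod_{k\in S}\frac{1}{k!},
\]
and the same formula for $\K^{\mm+\nn}$ with $S \subset \{0,\ldots,\mm+\nn-1\}$. Each $\det V_S$ vanishes whenever two coordinates of $\x$ coincide, so it is divisible by $\Delta(\x)$; in fact, $\det V_S / \Delta(\x) = s_{\lambda(S)}(\x)$, the Schur polynomial associated with the partition $\lambda(S)$ obtained by subtracting the staircase $(0,1,\ldots,\mm-1)$ from the increasing enumeration of $S$. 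Dividing by $|\Delta(\x)|^{2}$ gives
\[
\frac{\det [\K(x_i,x_j)]_{i,j=1}^{\mm}}{|\Delta(\x)|^{2}}
= \sum_{|S|=\mm} |s_{\lambda(S)}(\x)|^{2} \prod_{k\in S}\frac{1}{k!},
\]
and similarly for the truncation, where the sum is restricted to $S \subset \{0,\ldots,\mm+\nn-1\}$.

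Finally, the unique $S$ with $\lambda(S) = \emptyset$ is $S_{0} = \{0,1,\ldots,\mm-1\}$, which lies inside $\{0,\ldots,\mm+\nn-1\}$ for any $\nn \ge 0$. Since $s_{\emptyset} \equiv 1$ and every other Schur polynomial $s_{\lambda}$ with $\lambda \ne \emptyset$ is a homogeneous polynomial of strictly positive degree (so vanishes at the origin), letting $\x \to \oo_{\mm}$ collapses both sums to the single term corresponding to $S_{0}$, namely $\prod_{k=0}^{\mm-1} \frac{1}{k!}$. This simultaneously yields both equalities.

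No real obstacle is expected: the only thing to double-check is that each $\det V_S$ is genuinely divisible by $\Delta(\x)$ as a polynomial (which follows from the alternating property under swapping two $x_i$'s) and that the Schur polynomials $s_{\lambda(S)}$ with $\lambda(S)\neq \emptyset$ vanish at the origin, both of which are standard. The result fits naturally with the observation in Example~\ref{version} that the Palm kernel at $\oo_\mm$ is $\K_{\oo_\mm}(z,w) = \sum_{k\ge \mm} (z\wbar)^k/k!$.
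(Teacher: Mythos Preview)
Your approach is essentially the same as the paper's: both expand $\det[\K^{\mm+\nn}(x_i,x_j)]$ via the Binet--Cauchy formula and identify the leading minor at $S_0=\{0,\dots,\mm-1\}$ as $\prod_{k=0}^{\mm-1}(1/k!)\,|\Delta(\x)|^2$, with all other minors contributing higher-order terms. Your use of Schur polynomials to name those higher-order terms is more explicit than the paper's proof here (the paper simply writes ``higher order terms''), and in fact anticipates the expansion the paper develops later in Section~\ref{s:50} and Remark~\ref{rem:zx}.

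One small point worth tightening: for the truncated kernel $\K^{\mm+\nn}$ your sum over $S$ is finite, so passing to the limit $\x\to\oo_{\mm}$ is immediate. For the full kernel $\K$ the sum is infinite, and you should say a word about why the limit can be taken termwise---e.g.\ by noting that the series $\sum_S |s_{\lambda(S)}(\x)|^2 \prod_{k\in S}(1/k!)$ converges uniformly for $\x$ in a neighborhood of the origin (an easy bound via $|s_\lambda(\x)|\le s_\lambda(|\x|)$ suffices). The paper sidesteps this by first establishing the limit for $\K^{\mm+\nn}$ and then invoking the uniform convergence $\K^{\mm+\nn}\to\K$ on compacts; either route works.
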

\begin{proof}
We consider the $\mm \times \qq$ matrix $V^{\qq}(\x) 
= (\phi_p(x_i))_{1 \le i \le \mm, 0 \le p \le \qq-1}$ with $\qq \ge
\mm$, where $\phi_p(z) = z^p/\sqrt{p!}$.  
Then, 
\begin{equation}
[\K^{\mm+\nn}(x_i,x_j)]_{i,j=1}^{\mm}
= V^{\mm+\nn}(\x)
V^{\mm+\nn}(\x)^*. 
\label{:kmn} 
\end{equation}
By the Binet-Cauchy formula, as $\x \to \oo_{\mm}$, we see that  
\begin{align*}
\det[\K^{\mm+\nn}(x_i,x_j)]_{i,j=1}^{\mm}
&= \det\Big[V^{\mm}(\x) V^{\mm}(\x)^*\Big] 
+ \text{(higher order terms)} \\
&= \left(\prod_{k=0}^{\mm-1} \frac{1}{k!}\right) 
|\Delta(\x)|^2 (1+o(1)). 
\end{align*}
Since $\K^{\mm+\nn}(z,w)$ converges to $\K(z,w)$ as $\nn \to \infty$ 
uniformly on any compact set in $\C \times \C$, 
the second equality also holds. 
\qed\end{proof}

\begin{prop}[\cite{kostlan}]\label{kostlan}
The set of squares of moduli of the Ginibre points is equal in law to 
$\{Y_i, i=1,2,\dots\}$, where 
$\{Y_i\}_{i=1}^{\infty}$ is a sequence of independent random
variables such that each $Y_i$ obeys $\Gamma(i,1)$, i.e., 
\[
 \Prob(Y_i \le t) = \int_0^t \frac{s^{i-1} e^{-s}}{\Gamma(i)} ds \quad (t \ge
 0). 
\]
\end{prop}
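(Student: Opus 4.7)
The plan is to prove the statement first for the finite Ginibre ensemble $\G^{\nn}$ of \pref{ginibre} and then pass to the limit $\nn \to \infty$, using the weak convergence $\G^{\nn} \to \G$ noted right after \pref{ginibre}. Since $z \mapsto |z|^2$ is continuous, the push-forward point process of squared moduli on $(0,\infty)$ inherits this weak convergence.

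For finite $\nn$, I would pass to polar coordinates $s_i = r_i e^{i\theta_i}$ in the joint density $p^{\nn}(\s)$ of \pref{ginibre} and expand
\[
|\Delta(\s)|^2 = \sum_{\sigma,\tau \in S_{\nn}} \mathrm{sgn}(\sigma)\,\mathrm{sgn}(\tau) \prod_{i=1}^{\nn} s_i^{\sigma(i)-1}\, \overline{s_i^{\tau(i)-1}}.
\]
Integrating over $\theta_1,\dots,\theta_{\nn}\in[0,2\pi)$ kills every cross term by orthogonality of $\{e^{ik\theta}\}_{k\in\z}$, leaving only the diagonal $\sigma=\tau$ contribution, equal to $(2\pi)^{\nn}\cdot \mathrm{perm}[r_i^{2(j-1)}]_{i,j=1}^{\nn}$. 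Changing variables to $R_i = r_i^2$ (Jacobian $r_i\,dr_i = \tfrac{1}{2} dR_i$) yields for the joint density of the unordered squared moduli
\[
\frac{1}{\prod_{k=1}^{\nn} k!}\, \mathrm{perm}[R_i^{j-1}]_{i,j=1}^{\nn}\, e^{-\sum_i R_i}.
\]
On the other side, symmetrizing the product density of independent $Y_k\sim\Gamma(k,1)$, $k=1,\dots,\nn$, over the $\nn!$ permutations of its arguments produces
\[
\frac{1}{\nn!\,\prod_{k=1}^{\nn}(k-1)!}\, \mathrm{perm}[y_i^{j-1}]_{i,j=1}^{\nn}\, e^{-\sum_i y_i}.
\]
These two densities coincide because of the elementary identity $\prod_{k=1}^{\nn} k! = \nn!\,\prod_{k=1}^{\nn}(k-1)!$, which establishes the statement for $\G^{\nn}$.

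To lift the result to $\G$, I would verify that $\{Y_k\}_{k=1}^{\infty}$ is well-defined as a locally finite random measure on $(0,\infty)$: since $Y_k\sim\Gamma(k,1)$ has mean $k$ with exponentially small left tail for large $k$, Borel--Cantelli gives $\#\{k : Y_k \le M\}<\infty$ almost surely for every $M>0$, so the restriction of $\{Y_k\}_{k=1}^{\nn}$ to any compact subset of $(0,\infty)$ stabilizes as $\nn \to \infty$. Combined with the weak convergence of the squared-moduli point processes of $\G^{\nn}$ to that of $\G$, this upgrades the finite-$\nn$ identity to the infinite-dimensional statement. The only genuinely nontrivial step is the permanent identification in the middle paragraph; the limit argument is standard point-process bookkeeping.
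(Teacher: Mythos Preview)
Your argument is correct and is essentially Kostlan's original computation: expand $|\Delta(\s)|^2$ as a double sum over permutations, integrate out the angles to obtain a permanent, and match it with the symmetrized density of independent $\Gamma(k,1)$ variables via the identity $\prod_{k=1}^{\nn} k! = \nn!\,\prod_{k=1}^{\nn}(k-1)!$. The limiting step is also fine, since the map $\Theta:\sum_i\delta_{s_i}\mapsto\sum_i\delta_{|s_i|^2}$ is continuous for the vague topology and the target point process is locally finite.

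The paper, however, does not prove \pref{kostlan} this way. It cites the result and instead proves the more general \lref{gen_kostlan} for arbitrary radially symmetric DPPs with kernel $K(z,w)=\sum_j |a_j|^2 (z\wbar)^j$, using the DPP Laplace-transform identity $\E[e^{-\langle\xis,g\rangle}]=\det(I-K(1-e^{-g}))$. For radial $g$, the functions $\phi_j(z)=a_j z^j$ are simultaneous eigenfunctions of the restricted operators, with eigenvalues $\Prob(Z_j\in[r,R])$; this diagonalizes the Fredholm determinant into a product of one-dimensional Laplace transforms, yielding the identification directly at the infinite level with no limit argument. What your approach buys is elementarity: no Fredholm determinants or operator theory, only polar coordinates and a permanent. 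What the paper's approach buys is generality (any radial DPP, hence immediately the Palm versions in \corref{:46a}) and the avoidance of the finite-$\nn$ approximation step.
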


This proposition can be generalized to the case where 
radially symmetric DPPs on the plane \cite{HKPV}. 
Let $\radon$ be a rotation invariant finite measure on $\C$ and suppose
that $\{\phi_j(z) = a_j z^j\}_{j=0}^{\infty}$ is an orthonormal system 
with respect to $\radon$. We consider the kernel 
\begin{equation}
 K(z,w) 
= \sum_{j=0}^{\infty}\phi_j(z) \overline{\phi_j(w)}
= \sum_{j=0}^{\infty} |a_j|^2 (z\wbar)^j
\label{eq:kzw}
\end{equation}
and the DPP $\mu_K$ associated with $K$ and $\radon$. 
For simplicity, we assume that $\radon(dz) = g(|z|) dz$ 
for some $g : [0,\infty) \to [0,\infty)$, where 
$dz$ is the Lebesgue measure on $\C$. 
We consider independent non-negative random variables $\{Z_j, j=0,1,2,\dots\}$ 
whose law are given by $ f_j(t)dt $, where 
\begin{align}\label{:44b}&
 f_j(t) = \pi |a_j|^2 t^j g(\sqrt{t}), \quad \text{ for each }j 
.\end{align}
In \lref{gen_kostlan} and \corref{:46a} below, 
we consider the map $ \Theta : Q(\C) \to Q([0,\infty))$ defined by 
\begin{equation}
 \Theta(\sum_i\delta_{s_i}) = \sum_i \delta_{|s_i|^2}.  
\notag\label{theta}
\end{equation}

Although a proof of \lref{gen_kostlan} can be found in \cite{HKPV}, 
here we include a slightly different proof for readers' convenience. 

\begin{lem}[\cite{HKPV}] \label{gen_kostlan}
Let $ \mu_K $ and $ \{Z_j, j=0,1,2,\dots\}$ be as above. 
Then 
\begin{align}\label{:44d}&
\mu_K \circ \Theta^{-1} = 
\text{the law of $\sum_{j=0}^{\infty}\delta_{Z_j}$}.   
\end{align}
\end{lem}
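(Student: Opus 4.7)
The plan is to proceed by truncation: reduce to the finite-rank DPP, compute the joint density of the squared moduli directly via angular integration, recognize the resulting permanent as the symmetrized joint law of independent variables, and then pass to the limit.

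First, I would consider the truncated kernel $K^N(z,w) = \sum_{j=0}^{N-1} \phi_j(z)\overline{\phi_j(w)}$, which is a projection of rank $N$. The associated DPP $\mu_{K^N}$ is then an $N$-particle system, and its joint density with respect to $\radon^{\otimes N}$ is classically given by
\begin{align*}
p_N(z_1,\dots,z_N) = \frac{1}{N!}\bigl|\det[\phi_j(z_i)]_{i=1,\dots,N,\, j=0,\dots,N-1}\bigr|^2.
\end{align*}
Substituting $\phi_j(z) = a_j z^j$ and writing $z_i = r_i e^{i\theta_i}$, the expansion
\begin{align*}
|\det[\phi_j(z_i)]|^2 = \sum_{\sigma,\tau}\mathrm{sgn}(\sigma\tau)\prod_{i}a_{\sigma(i)}\bar a_{\tau(i)}r_i^{\sigma(i)+\tau(i)}e^{i(\sigma(i)-\tau(i))\theta_i}
\end{align*}
collapses after integrating each $\theta_i$ over $[0,2\pi)$: the orthogonality $\int_0^{2\pi}e^{ik\theta}d\theta = 2\pi\delta_{k,0}$ forces $\sigma=\tau$, so the surviving contribution is $(2\pi)^N\sum_\sigma \prod_i |a_{\sigma(i)}|^2 r_i^{2\sigma(i)} = (2\pi)^N\,\mathrm{perm}[|a_j|^2 r_i^{2j}]$.

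Next I would change variables to $t_i = r_i^2$, using $r_i\,dr_i = dt_i/2$ together with the density $g(r_i)$; after bookkeeping of constants, the joint density of $(t_1,\dots,t_N)$ with respect to $dt_1\cdots dt_N$ becomes $\frac{1}{N!}\mathrm{perm}[f_j(t_i)]_{i=1,\dots,N,\,j=0,\dots,N-1}$, where $f_j$ is given by \eqref{:44b}. This is exactly the symmetrized joint density of the unordered configuration $\sum_{j=0}^{N-1}\delta_{Z_j}$, where $Z_0,\dots,Z_{N-1}$ are independent with densities $f_0,\dots,f_{N-1}$. Thus $\mu_{K^N}\circ\Theta^{-1}$ equals the law of $\sum_{j=0}^{N-1}\delta_{Z_j}$.

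Finally, I would pass to the limit $N\to\infty$. Since $K^N(z,w)\to K(z,w)$ uniformly on compact sets, the cited convergence result (as used for $\K^\nn\to\K$ below \pref{ginibre}) gives $\mu_{K^N}\to\mu_K$ weakly on $Q(\C)$. The map $\Theta$ is continuous (and proper) for the vague topology, so push-forwards converge: $\mu_{K^N}\circ\Theta^{-1}\to \mu_K\circ\Theta^{-1}$. On the other side, $\sum_{j=0}^{N-1}\delta_{Z_j}\to\sum_{j=0}^\infty\delta_{Z_j}$ a.s. in the vague topology, provided the infinite sum is locally finite; local finiteness follows from Borel–Cantelli once $\sum_j \Prob(Z_j\le T)<\infty$ for every $T$, which in turn is guaranteed by the hypothesis that $\mu_K$ exists as a (locally trace-class) DPP, since $\sum_j \Prob(Z_j\le T) = \int_0^T K(z,z)\mathbf{1}_{|z|^2\le T}\,\radon(dz) = \E^{\mu_K}[\xis(\{|z|^2\le T\})]<\infty$. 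Equating both limits yields \eqref{:44d}.

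The main obstacle, and the step requiring the most care, is the combinatorial collapse of $|\det[\phi_j(z_i)]|^2$ under angular integration into a permanent; this is where the rotation-invariance of the monomial basis $\phi_j(z)=a_jz^j$ is used in an essential way, and it is also where the $1/N!$ factor from the density cancels against the $N!$ permutations so that the resulting expression matches the unordered joint law of independent variables with distinct (non-identically-distributed) marginals.
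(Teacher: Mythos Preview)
Your argument is correct and constitutes a genuinely different proof from the one in the paper. The paper works directly at the infinite level via Laplace functionals: for radial step functions $g$ it observes that the monomials $\phi_j$ are simultaneous eigenfunctions of the operators $K M_{1-e^{-g}}$, with eigenvalues $\E[1-e^{-g_0(Z_j)}]$, and then invokes the Fredholm determinant formula $\int e^{-\langle \xis,g\rangle}\,d\mu_K = \det(I-K(1-e^{-g}))$ to match the Laplace transform of $\sum_j \delta_{Z_j}$. Your route is more combinatorial: you truncate to the rank-$N$ projection DPP, compute the joint density of the squared moduli by angular integration (the key collapse $|\det|^2 \to \mathrm{perm}$), identify the permanent with the symmetrized law of independent $Z_0,\dots,Z_{N-1}$, and then pass to the limit. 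The paper's approach avoids any limiting procedure and leans on the determinantal Laplace formula as a black box; yours is more self-contained and elementary but requires the weak-convergence bookkeeping you carried out (continuity of $\Theta$, local finiteness via Borel--Cantelli). One small typo: in your final display the integral should be over $\{|z|^2\le T\}\subset\C$, not $\int_0^T$.
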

\begin{proof}
Let $A_{r,R} = \{z \in \C ; \sqrt{r} \le |z| \le \sqrt{R}\}$ for $0 \le r < R$. 
Then, each $\phi_j(z), j=0,1,\dots$ is an eigenfunction of 
the restriction operator $K_{A_{r,R}}$ and the corresponding eigenvalue
is given by 
\begin{equation}
\la_j(r,R) 
= \int_{A_{r,R}} |\phi_j(z)|^2 \radon(dz) 
= \int_{r}^{R} f_j(t)dt
= \Prob(Z_j \in [r, R]), 
\label{eigenrR}
\end{equation}
where $f_j(t)$ is defined as in \eqref{:44b}. 
For disjoint annuli $\{A_{r_k, R_k}\}_{k}$ having the origin as common
 center, we consider functions represented as 
$g = \sum_k c_k \ind_{A_{r_k,R_k}}$ and 
$g_0 = \sum_k c_k \ind_{[r_k,R_k]}$. Then, 
since $1-e^{-g} = \sum_{k} (1-e^{-c_k}) \ind_{A_{r_k,R_k}}$ and 
the system $\{\phi_j(z), j=0,1,\dots\}$ are simultaneous eigenfunctions 
of $\{K_{A_{r_k,R_k}}\}$, it follows from \eqref{eigenrR} that 
\begin{align*}
K((1-e^{-g})\cdot \phi_j) 
&= \sum_k (1-e^{-c_k}) K_{A_{r_k,R_k}} \phi_j \\
&= \sum_k (1-e^{-c_k}) \Prob(Z_j \in [r_k, R_k]) \cdot \phi_j\\
&= \E[1-e^{-g_0(Z_j)}] \phi_j
\end{align*}
Hence, the nonzero eigenvalues of the operator 
$K((1-e^{-g})\cdot )$ are given by 
$\{\E[1-e^{-g_0(Z_j)}]\}_{j=0}^{\infty}$. 
Then, we have 
\begin{align}
\int_{Q(\C)} e^{-\bra \xis, g \ket} \mu_K(d\xis) 
&= \det\{I - K(1-e^{-g})\} \notag\\
&= \prod_{j=0}^{\infty} \E[e^{-g_0(Z_j)}]
= \E[e^{-\bra \sum_{j=0}^{\infty} \delta_{Z_j}, g_0 \ket}]
\label{laplace}
\end{align}
The first equality follows from the well-known formula for 
Laplace transform of DPP (see \cite{ST}). 
On the other hand, by the definition of the map $\Theta$, 
we see that 
\begin{equation}
\int_{Q([0,\infty])} e^{-\bra \eta, g_0 \ket} (\mu_K \circ
 \Theta^{-1})(d\eta) 
= \int_{Q(\C)} e^{-\bra \xis, g \ket} \mu_K(d\xis). 
\label{pushforward}
\end{equation}
Therefore, the standard limiting argument together with 
\eqref{laplace} and \eqref{pushforward} 
yield \eqref{:44d}. 
\qed\end{proof}

\begin{cor}\label{:46a}
Let $\{ Y_i \}_{i=1}^{\infty}$ be as in \pref{kostlan} and $ \G_{\oo_{\mm}} $ 
be the Palm measure of $\G$ 
conditioned at $ \oo_{\mm}=(0,\ldots,0) \in \mathbb{C} ^{\mm}$. 
Then,  
\begin{align}\label{:44e}&
\G_{\oo_{\mm}} \circ \Theta^{-1} 
= \text{the law of $\sum_{i=\mm+1}^{\infty}\delta_{Y_i}$}.   
\end{align}
\end{cor}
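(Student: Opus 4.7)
The plan is to deduce the corollary as a direct application of \lref{gen_kostlan} to the Palm kernel $\K_{\oo_{\mm}}$ identified in \eref{version}. Recall from \eqref{:31f} that $\G_{\oo_{\mm}}$ is the DPP associated with
\begin{align*}
\K_{\oo_{\mm}}(z,w) = \sum_{k=\mm}^{\infty} \frac{(z\wbar)^{k}}{k!}
\end{align*}
and the reference measure $\gsf(dz) = \pi^{-1} e^{-|z|^2}dz$. This kernel is rotation invariant and of the form \eqref{eq:kzw} with the orthonormal system $\phi_k(z) = z^k/\sqrt{k!}$ for $k \ge \mm$, i.e.\ $|a_k|^2 = 1/k!$ for $k \ge \mm$ and $a_k = 0$ for $k < \mm$. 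This places the Palm measure in the framework where \lref{gen_kostlan} applies.

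Next I would compute the one-dimensional densities $f_k$ arising in \lref{gen_kostlan}. Since $\gsf(dz) = g(|z|)dz$ with $g(t) = \pi^{-1} e^{-t^2}$, the formula \eqref{:44b} gives, for $k \ge \mm$,
\begin{align*}
f_k(t) = \pi \cdot \frac{1}{k!} \cdot t^{k} \cdot \pi^{-1} e^{-t} = \frac{t^{k} e^{-t}}{k!},
\end{align*}
which is precisely the $\Gamma(k+1,1)$ density. Thus the associated independent random variables $Z_k$ (for $k \ge \mm$) satisfy $Z_k \inlaw Y_{k+1}$ in the notation of \pref{kostlan}.

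Finally, \lref{gen_kostlan} yields
\begin{align*}
\G_{\oo_{\mm}} \circ \Theta^{-1} = \text{law of } \sum_{k=\mm}^{\infty}\delta_{Z_k},
\end{align*}
and reindexing $i = k+1$ identifies the right-hand side with the law of $\sum_{i=\mm+1}^{\infty}\delta_{Y_i}$, which is \eqref{:44e}. The only mildly delicate point to check is that \lref{gen_kostlan} is indeed applicable when the first $\mm$ coefficients $a_k$ vanish; but nothing in its proof requires all $a_k \ne 0$ (the calculation only uses that $\{\phi_k\}_{k\ge \mm}$ is an orthonormal family of eigenfunctions of the restriction operators $K_{A_{r,R}}$ with eigenvalues $\la_k(r,R) = \Prob(Z_k \in [r,R])$), so no additional work is required.
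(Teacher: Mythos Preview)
Your proof is correct and follows essentially the same approach as the paper: apply \lref{gen_kostlan} to the Palm kernel $\K_{\oo_{\mm}}$ from \eqref{:31f} with $a_k=1/\sqrt{k!}$ for $k\ge\mm$ and $a_k=0$ otherwise, identify the resulting densities $f_k$ as $\Gamma(k+1,1)$, and reindex. Your remark about the vanishing coefficients is, if anything, slightly more careful than the paper's treatment (the paper simply writes ``$Z_j=0$ for $0\le j\le\mm-1$'' and then drops those terms from the sum).
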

\begin{proof}
Let $\radon(dz) = \pi^{-1} e^{-|z|^2} dz$, and set 
$a_j = 1/\sqrt{j!}$ for $j \ge \mm$; $0$ otherwise in \eqref{eq:kzw}. 
Then $Z_j = 0$ for $0 \le j \le \mm-1$, and 
the law of $Z_j$ is $\frac{t^j}{j!} e^{-t} dt$ 
for $j \ge \mm$ by \eqref{:44b}, 
which is equal to that of $Y_{j+1}$. 
The corresponding kernel $K$ is 
$\K_{\oo_{\mm}}(z,w)$ from \eqref{:31f}, and then the corresponding 
DPP is $\G_{\oo_{\mm}}$.
By \eqref{:44d}, we have 
\[
\G_{\oo_{\mm}} \circ \Theta^{-1} 
= \text{the law of $\sum_{j=\mm}^{\infty}\delta_{Y_{j+1}}$},  
\]
which implies \eqref{:44e}. 
\qed\end{proof}

\section{Properties of kernel $\K^{\nn}$ and its Palm kernel
 $\K^{\nn}_{\x}$}\label{s:50}
We recall the definition of Schur functions following \cite{Mac}. 
Let $\delta = \delta_n = (n-1,n-2,\dots,1,0)$ and $\la =
(\la_1,\dots,\la_n)$ be a weakly decreasing sequence, i.e., 
$\la_1 \ge \la_2 \ge \dots \ge \la_n \ge 0$.  
The weight of $\la$ is defined by $|\la| = \sum_{i=1}^n \la_i$. 
For $\x = (x_1,\dots,x_n)$, we define 
\begin{equation}
 a_{\la + \delta}(\x) = \det(x_i^{\la_j + n - j})_{i,j=1}^n. 
\notag
\end{equation}
In particular, $a_{\delta}(\x)$ is the Vandermonde determinant given by 
\begin{equation}
 a_{\delta}(\x) = \det(x_i^{n-j})_{i,j=1}^n 
= \prod_{1 \le i < j \le n} (x_i-x_j). 
\label{:avan}
\end{equation}
The Schur function associated with $\la$ is defined by 
\begin{equation}
 s_{\la}(\x) = s_{\la}(x_1,\dots,x_n) 
= \frac{a_{\la+\delta}(\x)}{a_{\delta}(\x)}. 
\notag
\end{equation}
We note that every Schur polynomial is a linear combination of monomials 
with nonnegative integral coefficients. 

For later use, we give an estimate for sum of Schur functions 
\begin{lem}\label{l:121s}
For $\x \in \C^{\ell}$ and $i \in \n \cup \{0\}$, 
\begin{equation}
\sum_{\la : |\la| \le i} |s_{\la}(\x)|^2 
\le 
\Big( 
\prod_{j=1}^{\ell} (1+ |x_j|) 
\Big)^{2i}. 
\label{:121ss}
\end{equation}
\end{lem}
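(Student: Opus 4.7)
The plan is to reduce the claim to nonnegative real variables, expand the left-hand side using the RSK correspondence, and then exhibit a weight-preserving injection into the tuples appearing in the natural expansion of the right-hand side.

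First, each Schur polynomial admits the tableau expansion $s_\la(\x)=\sum_T \x^T$, summed over semistandard Young tableaux $T$ of shape $\la$ with entries in $\{1,\dots,\ell\}$, so it is a polynomial in $x_1,\dots,x_\ell$ with nonnegative integer coefficients. The triangle inequality therefore gives $|s_\la(\x)|\le s_\la(|x_1|,\dots,|x_\ell|)$, and writing $r_j=|x_j|$ and $\mathbf{r}=(r_1,\dots,r_\ell)$, it suffices to prove
\begin{equation*}
\sum_{\la:\,|\la|\le i} s_\la(\mathbf{r})^2 \le \prod_{j=1}^{\ell}(1+r_j)^{2i}.
\end{equation*}
Next, the RSK correspondence gives a bijection between pairs $(T,T')$ of SSYT of a common shape $\la$ (both with entries in $\{1,\dots,\ell\}$) and $\ell\times\ell$ matrices $M=(m_{a,b})$ of nonnegative integers, under which $|\la|=|M|:=\sum_{a,b}m_{a,b}$, the content of $T$ equals the row-sum vector $\mu(M)$, and the content of $T'$ equals the column-sum vector $\nu(M)$. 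Summing $s_\la(\mathbf{r})s_\la(\mathbf{r})$ over $\la$ with $|\la|\le i$ and regrouping by $M$ yields
\begin{equation*}
\sum_{|\la|\le i} s_\la(\mathbf{r})^2
= \sum_{M:\,|M|\le i}\prod_{a,b=1}^{\ell}(r_a r_b)^{m_{a,b}}
= \sum_{M:\,|M|\le i}\prod_{k=1}^{\ell} r_k^{\mu_k(M)+\nu_k(M)}.
\end{equation*}

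For the right-hand side, expand
\begin{equation*}
\prod_{j=1}^{\ell}(1+r_j)^{2i}
= \Bigl(\prod_{j=1}^{\ell}(1+r_j)\Bigr)^{2i}
= \sum_{(T_1,\dots,T_{2i})\in(2^{\{1,\dots,\ell\}})^{2i}} \prod_{t=1}^{2i}\prod_{j\in T_t} r_j.
\end{equation*}
Given a matrix $M$ with $m=|M|\le i$, fix the canonical ordering of its boxes as $(a_1,b_1),\dots,(a_m,b_m)$ (lex order on $(a,b)$, with $m_{a,b}$ consecutive repetitions), and define $T_t=\{a_t\}$ for $1\le t\le m$, $T_t=\{b_{t-i}\}$ for $i+1\le t\le i+m$, and $T_t=\emptyset$ otherwise. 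Since $m\le i$ all indices stay within $\{1,\dots,2i\}$. This assignment is weight-preserving (both contribute $\prod_k r_k^{\mu_k(M)+\nu_k(M)}$) and injective (from the image tuple one recovers $m$ as the number of nonempty $T_t$ with $t\le i$, then each $a_t,b_t$ as the unique elements of the singletons, and finally $M$ via the canonical ordering). Hence the sum over matrices is at most the unrestricted sum over tuples, which equals $\prod_j(1+r_j)^{2i}$.

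The main obstacle is essentially bookkeeping: setting up the RSK expansion cleanly and checking the injection. Conceptually, the inequality works precisely because each box of $M$ contributes exactly one unit to a row sum and one unit to a column sum, so $|M|\le i$ caps the total exponent of each variable $r_j$ at $2i$—matching the power on the right.
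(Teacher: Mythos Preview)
Your proof is correct, but it takes a genuinely different route from the paper's. The paper invokes the identity
\[
\sum_{\lambda:\,|\lambda|\le i}\binom{i}{\lambda}\, s_\lambda(\x)\,t^{|\lambda|}
=\Big(\prod_{j=1}^{\ell}(1+x_jt)\Big)^{i}
\]
(Macdonald, Example~1, p.~65), sets $t=1$ and $\x\to|\x|$, and uses $\binom{i}{\lambda}\ge 1$ together with the elementary fact that a sum of squares of nonnegative reals is bounded by the square of their sum:
\[
\sum_{|\lambda|\le i}|s_\lambda(\x)|^2
\le\Big(\sum_{|\lambda|\le i} s_\lambda(|\x|)\Big)^2
\le\Big(\prod_{j=1}^{\ell}(1+|x_j|)\Big)^{2i}.
\]
This is a two-line argument once the identity is quoted.

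Your approach instead expands $\sum_{|\lambda|\le i} s_\lambda(\mathbf r)^2$ via RSK as a sum over nonnegative integer matrices of total mass at most $i$, and then builds an explicit weight-preserving injection into the terms of $\prod_j(1+r_j)^{2i}$. This is longer but more self-contained: it does not rely on locating the generating-function identity, and it bounds the sum of squares directly rather than passing through $(\sum s_\lambda)^2$. The paper's route is quicker if one has the Schur-expansion of $\prod(1+x_jt)^i$ at hand; yours is more combinatorially transparent and would generalize more readily to other truncated Cauchy-type sums.
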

\begin{proof}
We recall the following formula for Schur functions (see, e.g., \cite{Mac}, 
Example~1, page 65). 
\begin{equation}
\sum_{\la : |\la| \le i} {i \choose \la} s_{\lambda}(\x) t^{|\la|} 
= 
\Big(
\prod_{j=1}^{\ell} (1+x_j t)
\Big)^i. 
\label{:121v} 
\end{equation}
Since the coefficients of Schur functions are all nonnegative and 
${i \choose \la} \ge 1$, 
by setting $t=1$ in \eqref{:121v}, we have 
\[
\sum_{\la : |\la| \le i} 
|s_{\la}(\x)|^2  
\le \Big(\sum_{\la : |\la| \le i} 
s_{\la}(|\x|)\Big)^2  
\le \Big( 
\prod_{j=1}^{\ell} (1+ |x_j|) 
\Big)^{2i},  
\]
where $|\x| = (|x_1|, |x_2|, \dots, |x_{\ell}|)$. 
\qed\end{proof}

Let $\phi_k(z) = z^k / \sqrt{k!}$. Note that 
$\K^{\nn}(z,w) = \sum_{k=0}^{\nn-1} \phi_k(z) \overline{\phi_k(w)}$. 
For $\x = (x_1,\dots, x_{\ell}) \in \C^{\ell}$ we set 
\[
\K^{\nn}(\x, \x) = (\K^{\nn}(x_i,x_j))_{i,j=1}^{\ell}, 
\]
$\K^{\nn}(z,\x) = (\K^{\nn}(z,x_1), \dots, \K^{\nn}(z,x_{\ell}))$ 
and 
$\K^{\nn}(\x, w) = \K^{\nn}(w,\x)^*$, where $*$ means
the complex-conjugation as matrix. 
From \pref{l:31}, if $\K^{\nn}(\x,\x)$ is invertible, 
or equivalently, $\det \K^{\nn}(\x,\x) >0$, 
then the Palm kernel $\K^{\nn}_{\x}$ has the following two
representations:  
\begin{align}
 \K^{\nn}_{\x}(z,w) 
&= \det 
\begin{pmatrix}
 \K^{\nn}(z,w) & \K^{\nn}(z, \x) \\
 \K^{\nn}(\x, w) & \K^{\nn}(\x,\x)
\end{pmatrix}
/ \det \K^{\nn} (\x,\x) 
\label{:121d}
\\
&=  \K^{\nn}(z,w) - \K^{\nn}(z,\x) 
\K^{\nn}(\x,\x)^{-1} \K^{\nn}(\x, w).  
\label{:121e}
\end{align}
The second equality is obtained from the well-known determinant 
formula for block matrix 
\[
\det 
\begin{pmatrix}
A_{11} & A_{12}\\
A_{21} & A_{22}
\end{pmatrix} = \det A_{22} \cdot
\det (A_{11} - A_{12} A_{22}^{-1} A_{21})
\quad \text{if $A_{22}$ is invertible}. 
\]
For $\x \in \C^{\ell}$ and $i \in \n \cup \{0\}$, 
we define a column vector by 
\begin{equation}
 \Phi_i(\x)
 = (\phi_i(x_1), \phi_i(x_2), \dots, \phi_i(x_{\ell}))^T
\notag
\end{equation}
and a Vandermonde-type $\ell \times \nn$ matrix by 
\begin{equation}
 V^{\nn}(\x) = 
(\Phi_{\nn-1}(\x), \Phi_{\nn-2}(\x), \dots, \Phi_{1}(\x), 
\Phi_{0}(\x))
\label{:121k}
.\end{equation}
Then, we have 
\begin{align}
 \K^{\nn}(\x, \x)
&= \sum_{p=0}^{\nn-1}  \Phi_p(\x)  \Phi_p(\x)^* 
= V^{\nn}(\x) V^{\nn}(\x)^*, 
\label{:121b} \\
\K^{\nn}(z, \x) 
&= \sum_{p=0}^{\nn-1} \phi_p(z) \Phi_p(\x)^*, \quad 
\K^{\nn}(\x, w)
= \sum_{p=0}^{\nn-1} \Phi_p(\x)\overline{\phi_p(w)}. 
\label{:121bb}
\end{align}

\begin{lem}\label{l:91}
Suppose $\nn > \mm$. 
The Palm kernel $\K^{\nn}_{\x}(z,w)$ for $\x \in \C^{\mm}$ admits 
the following representation 
\begin{equation}
 \K^{\nn}_{\x}(z,w) = q_{\x}(z) \overline{q_{\x}(w)}
 \Lsf^{\nn}_{\x}(z,w),  
\label{:Kq} 
\end{equation}
where $q_{\x}(z) = \prod_{j=1}^{\mm} (z-x_j) $ and 
$\Lsf^{\nn}_{\x}(z,w)$ is a Hermitian kernel function analytic in the
 first variable and is nowhere-vanishing on the diagonal.  
\end{lem}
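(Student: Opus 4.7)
My plan is to start from the determinantal formula \eqref{:121d} for the Palm kernel and to extract the factor $q_{\x}(z)\overline{q_{\x}(w)}$ by exploiting vanishing of the numerator when $z$ or $w$ coincides with one of the $x_j$'s, and then to verify non-vanishing on the diagonal by a projection-theoretic interpretation of $\K^{\nn}_{\x}$.

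First, every entry of the $(\mm{+}1)\times(\mm{+}1)$ block matrix in \eqref{:121d} is a polynomial in $z$ and in $\bar w$, since $\K^{\nn}(z,w) = \sum_{k=0}^{\nn-1}(z\bar w)^{k}/k!$. Hence the numerator
\[
P(z,w) := \det\begin{pmatrix} \K^{\nn}(z,w) & \K^{\nn}(z,\x) \\ \K^{\nn}(\x,w) & \K^{\nn}(\x,\x) \end{pmatrix}
\]
is polynomial in $z$ and in $\bar w$. When $z = x_j$ its first row coincides with the $(j{+}1)$-th row, so $P(x_j,w)\equiv 0$; therefore, for each fixed $w$, the polynomial $z\mapsto P(z,w)$ is divisible by $q_{\x}(z) = \prod_{j=1}^{\mm}(z-x_j)$. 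Applying the same argument to the first column (setting $w=x_j$ makes it coincide with the $(j{+}1)$-th column) gives divisibility of $P(z,w)$ by $\overline{q_{\x}(w)}$ as a polynomial in $\bar w$. Dividing by $\det\K^{\nn}(\x,\x)$ in \eqref{:121d} yields the required factorization $\K^{\nn}_{\x}(z,w) = q_{\x}(z)\overline{q_{\x}(w)}\Lsf^{\nn}_{\x}(z,w)$ with $\Lsf^{\nn}_{\x}$ analytic in $z$ and anti-analytic in $w$; Hermitian symmetry of $\Lsf^{\nn}_{\x}$ then follows from that of $\K^{\nn}_{\x}$ by cancelling the nonzero factor $q_{\x}(w)\overline{q_{\x}(z)}$ off the diagonal and extending by analyticity. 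To pass from the generic case $\det\K^{\nn}(\x,\x)>0$ (in particular, $\x$ with distinct coordinates) to arbitrary $\x\in\C^{\mm}$, I would invoke \lref{conti-version} together with the continuity of the coefficients of $\K^{\nn}_{\x}(z,w)$ as a polynomial.

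For the nowhere-vanishing property on the diagonal, I would interpret $\K^{\nn}$ as the integral kernel of the orthogonal projection in $L^{2}(\C,\gsf)$ onto the polynomial subspace $V_{\nn} := \mathrm{span}\{1,z,\dots,z^{\nn-1}\}$. Its reproducing property $(\K^{\nn}f)(x)=f(x)$ for $f\in V_{\nn}$ implies that iteratively applying \eqref{:31a} at $x_1,\dots,x_{\mm}$ produces the integral kernel of the orthogonal projection onto
\[
W_{\x} := \{ f\in V_{\nn} : f(x_j)=0,\ j=1,\dots,\mm \} = \{\, q_{\x}(z)\, r(z) : r\in V_{\nn-\mm} \},
\]
an $(\nn{-}\mm)$-dimensional subspace. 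Choosing an orthonormal basis $\{q_{\x} r_k\}_{k=0}^{\nn-\mm-1}$ of $W_{\x}$, we obtain
\[
\K^{\nn}_{\x}(z,w) = q_{\x}(z)\overline{q_{\x}(w)} \sum_{k=0}^{\nn-\mm-1} r_k(z)\overline{r_k(w)},
\]
so $\Lsf^{\nn}_{\x}(z,z) = \sum_{k} |r_k(z)|^{2}$. Since $\{r_k\}$ is an orthonormal basis of the full space of polynomials of degree $\le\nn-\mm-1$, which contains the nonzero constant function, we have $\sum_k |r_k(z)|^2>0$ at every $z\in\C$.

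The determinantal bookkeeping for extracting $q_{\x}(z)\overline{q_{\x}(w)}$ is routine; the step that requires a genuine idea is the strict positivity of $\Lsf^{\nn}_{\x}$ on the diagonal, for which the projection interpretation (together with the facts that $W_{\x}$ has the full dimension $\nn-\mm$ and that the quotient space contains the constants) is the essential input. A subtlety to handle carefully is the extension to degenerate $\x$ (repeated coordinates or $\det\K^{\nn}(\x,\x)=0$), which I would settle by continuity via \lref{conti-version}: both sides of the factorization are continuous in $\x$ and $\Lsf^{\nn}_{\x}(z,z)>0$ is preserved under such limits because the limiting orthonormal basis of $W_{\x}$ still spans polynomials of degree $\le \nn-\mm-1$.
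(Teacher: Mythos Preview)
Your proof is correct and takes a genuinely different route from the paper's. The paper establishes the factorization via the Binet--Cauchy formula: writing the numerator and denominator of \eqref{:121d} as $\det\big(V^{\nn}(z;\x)V^{\nn}(w;\x)^{*}\big)$ and $\det\big(V^{\nn}(\x)V^{\nn}(\x)^{*}\big)$ and expanding each as a sum over $\ell{+}1$-minors (resp.\ $\ell$-minors), every minor splits as a Schur polynomial times a Vandermonde determinant, and the ratio of Vandermonde pieces $a_{\delta_{\ell+1}}(z;\x)/a_{\delta_{\ell}}(\x)$ is exactly $q_{\x}(z)$. This produces the explicit expression \eqref{:121m} for $\Lsf^{\nn}_{\x}$ as a ratio of sums of squared Schur polynomials; strict positivity on the diagonal is then immediate (the term $I=\delta_{\ell+1}$ contributes $|s_{\mathbf{0}}(z;\x)|^{2}=1$), and degenerate $\x$ requires no separate continuity argument because the formula is polynomial in $\x$. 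Your approach---row coincidence in the determinant for the factorization, and identification of $\K^{\nn}_{\x}$ with the reproducing kernel of $W_{\x}=q_{\x}\cdot V_{\nn-\ell}$ for the positivity---is more conceptual and avoids the Schur machinery, at the cost of not producing \eqref{:121m}, which the paper exploits in \rref{rem:zx}. One small imprecision: $\{r_k\}$ is orthonormal only for the weighted inner product $L^{2}(|q_{\x}|^{2}\gsf)$, not for $L^{2}(\gsf)$; what your argument actually needs (and has) is that $\{r_k\}$ is a \emph{linear} basis of $V_{\nn-\ell}$, a space containing the constants, so $\sum_k|r_k(z)|^{2}$ cannot vanish.
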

\begin{proof}
Suppose $\nn > \mm$. We define an $(\ell+1) \times \nn$ matrix by 
\begin{equation}
V^{\nn}(z ; \x) = 
(\Phi_{\nn-1}(z,\x), \Phi_{\nn-2}(z,\x), \dots, \Phi_{1}(z,\x), 
\Phi_{0}(z,\x)), 
\notag
\end{equation}
where $\Phi_i(z, \x)
 = (\phi_i(z), \phi_i(x_1), \dots, \phi_i(x_{\ell}))^T$ 
for $z \in \C$ and $\x \in \C^{\mm}$. 
Then, by \eqref{:121d} and \eqref{:121b}, we have 
\begin{equation}
 \K_{\x}^{\nn}(z,w) 
= \det \Big( V^{\nn}(z ; \x) V^{\nn}(w ; \x)^* \Big) 
/ \det \Big( V^{\nn}(\x) V^{\nn}(\x)^* \Big)
\label{:ratio} 
\end{equation}
whenever $\det \K^{\nn}(\x, \x) > 0$. 
By the Binet-Cauchy formula, we have 
\begin{align} 
\lefteqn{\det \Big( V^{\nn}(z ; \x) V^{\nn}(w ; \x)^* \Big)
= \sum_{I \subset \{0,1,\dots, \nn-1\} \atop{|I|=\mm+1}} 
\det(V^{\nn}_I(z ; \x)) \overline{\det(V^{\nn}_I(w ; \x))}} \notag\\  
&= \sum_{I \subset \{0,1,\dots, \nn-1\} \atop{|I|=\mm+1}}  
\frac{1}{I!} s_{I - \delta_{\mm+1}}(z ; \x) 
\overline{s_{I - \delta_{\mm+1}}(w ; \x)} \cdot 
a_{\delta_{\mm+1}}(z ; \x) \overline{a_{\delta_{\mm+1}}(w ; \x)}   
\label{:zzx}
\end{align}
and 
\begin{align}
 \det \Big( V^{\nn}(\x) V^{\nn}(\x)^* \Big) 
&= \sum_{I \subset \{0,1,\dots, \nn-1\} \atop{|I|=\mm}} 
\det(V^{\nn}_I(\x)) \overline{\det(V^{\nn}_I(\x))} \notag  \\
&= \sum_{I \subset \{0,1,\dots, \nn-1\} \atop{|I|=\mm}}  
\frac{1}{I!} |s_{I - \delta_{\mm}}(\x)|^2 \cdot 
|a_{\delta_{\mm}}(\x)|^2. 
\label{zx}
\end{align}
Here $I! = \prod_{j=0}^{\mm} i_j!$ if 
$I = \{i_{\mm}, i_{\mm-1}, \cdots, i_1, i_0\}$ 
with $\nn -1 \ge i_{\mm} > i_{\mm-1} > \cdots > i_1 > i_0 \ge 0$, 
and $V^{\nn}_I(\x)$ is the $\mm \times \mm$ matrix obtained 
from the $\mm \times \nn$ matrix $V^{\nn}(\x)$ by deleting the columns 
corresponding to indices $\{0,1,\dots, \nn-1\} \setminus I$. 
The $(\mm+1) \times (\mm+1)$ matrix $V^{\nn}_I(z ; \x)$ is similarly obtained. 

From \eqref{:ratio}--\eqref{zx}, we can see that 
\eqref{:Kq} holds by setting 
\begin{equation}
\Lsf^{\nn}_{\x}(z,w) 
:= 
\sum_{I \subset \{0,1,\dots, \nn-1\} \atop{|I|=\mm+1}}  
\frac{s_{I - \delta_{\mm+1}}(z ; \x) 
\overline{s_{I - \delta_{\mm+1}}(w ; \x)}}{I!} \Big/  
\sum_{I \subset \{0,1,\dots, \nn-1\} \atop{|I|=\mm}}  
\frac{|s_{I - \delta_{\mm}}(\x)|^2}{I!} 
\label{:121m}
\end{equation}
since $a_{\delta_{\mm+1}}(z ; \x) /  a_{\delta_{\mm}}(\x) =
 \prod_{j=1}^{\mm}(z-x_j) = q_{\x}(z)$ by \eqref{:avan}. 

From \eqref{:121m}, it is clear that 
$\Lsf^{\nn}_{\x}(z,z) > 0$ for all $z \in \C$ when $\nn > \ell$. 
\qed\end{proof}

\begin{rem}\label{rem:zx} 
(1) From \eqref{:kmn} and \eqref{zx}, by letting $\nn \to \infty$, 
we see that the quantity in \eqref{:zx} 
is given by 
\[
Z(\x) = \frac{\det [\K(x_i,x_j)]_{i,j=1}^{\mm}}{|\Delta(\x)|^2}
= \sum_{I \subset \{0,1,2,\dots\} \atop{|I|=\mm}}  
\frac{1}{I!} |s_{I - \delta_{\mm}}(\x)|^2. 
\]
The right-hand side converges by the monotone convergence theorem. 
Since $s_{\mathbf{0}}(\mathbf{0}) = 1$ when $I = \delta_{\mm}$ and  
$s_{I - \delta_{\mm}}(\mathbf{0})=0$ otherwise, we  obtain
 \lref{Phiz}. \\
(2) Let $\x \in \C^{\ell}$. 
If $\nn \le \ell$, then $\K^{\nn}_{\x} = \Lsf^{\nn}_{\x} \equiv 0$, 
and if $\nn=\ell+1$, then $\Lsf^{\nn}_{\x}(z,w)$ 
does not depend on $z$ and $w$. Indeed, it follows from \eqref{:121m} 
that 
\[
\Lsf^{\ell+1}_{\x}(z,w) 
= 
\left(
\sum_{p=0}^{\ell} p! |s_{\la_{\ell, p}}(\x)| ^2
\right)^{-1}
= 
\left(
\sum_{p=0}^{\ell} p! |e_{\ell - p}(\x)|^2
\right)^{-1}, 
\]
where $\la_{\ell, p} = (\underbrace{1, \dots, 1}_{\ell - p}, 
\underbrace{0, \dots, 0}_{p})$, and 
$s_{\la_{\ell, p}}(\x)$ coincides with the $(\ell-p)$-th elementary symmetric 
function $e_{\ell-p}(\x)$. 
\end{rem}

In the next section, we will show a uniform variance estimate. 
For this purpose, we show an inequality for the difference between 
$\K^{\nn}$ and $\K^{\nn}_{\x}$. 

\begin{lem} \label{l:121p} 
Let $\x \in \C^{\ell}$. Then, for every $z \in \C$, 
\begin{equation}
(\K^{\nn}(z,z) - \K_{\x}^{\nn}(z,z))^{1/2}
\le \sum_{i=0}^{\nn-1} 
|V^{\ell}(\x)^{-1} \Phi_i(\x)| \, \phi_i(|z|),
\label{:123p}
\end{equation}
where $V^{\ell}(\x)$ is the $\ell \times \ell$ matrix 
as in \eqref{:121k} with $\nn=\ell$. 
When $V^{\ell}(\x)$ is not invertible, 
the right-hand side is understood as
\begin{equation}
 V^{\ell}(\x)^{-1} \Phi_i(\x) = \liminf_{\y \to \x, \y \in \mathcal{I}^{\ell}} 
V^{\ell}(\y)^{-1}
 \Phi_i(\y), 
\label{:123q} 
\end{equation}
where $\mathcal{I}^{\ell} := \{\y \in \C^{\ell} : \det V^{\ell}(\y) > 0\}$. 
\end{lem}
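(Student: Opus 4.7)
The plan is to use the Palm formula \eqref{:121e}, which when $M := \K^{\nn}(\x,\x)$ is invertible gives
\[
\K^{\nn}(z,z) - \K^{\nn}_{\x}(z,z) = \K^{\nn}(z,\x)\,M^{-1}\,\K^{\nn}(\x,z) = |M^{-1/2}w(z)|^2,
\]
with $w(z) := \K^{\nn}(\x,z) = \sum_{i=0}^{\nn-1}\Phi_i(\x)\overline{\phi_i(z)}$ from \eqref{:121bb}. Expanding linearly, applying the triangle inequality in the Euclidean norm, and using $|\phi_i(z)| = \phi_i(|z|)$ yields
\[
(\K^{\nn}(z,z) - \K^{\nn}_{\x}(z,z))^{1/2} \le \sum_{i=0}^{\nn-1}\bigl(\Phi_i(\x)^* M^{-1}\Phi_i(\x)\bigr)^{1/2}\,\phi_i(|z|),
\]
which reduces matters to bounding $\Phi_i(\x)^* M^{-1}\Phi_i(\x)$ by $|V^{\ell}(\x)^{-1}\Phi_i(\x)|^2$.

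The heart of the argument is a matrix-monotonicity observation. From \eqref{:121b}, $M = \sum_{p=0}^{\nn-1}\Phi_p(\x)\Phi_p(\x)^*$, while $V^{\ell}(\x)V^{\ell}(\x)^* = \sum_{p=0}^{\ell-1}\Phi_p(\x)\Phi_p(\x)^*$. When $\nn \ge \ell$ the difference is a sum of rank-one positive semi-definite matrices, so $M \ge V^{\ell}(\x)V^{\ell}(\x)^*$ in the Loewner order; assuming $V^{\ell}(\x)$ is invertible, both matrices are positive definite, and inverting reverses the inequality to $M^{-1} \le (V^{\ell}(\x)V^{\ell}(\x)^*)^{-1}$. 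Evaluating the corresponding quadratic form at $\Phi_i(\x)$ gives $\Phi_i(\x)^* M^{-1}\Phi_i(\x) \le \Phi_i(\x)^*(V^{\ell}(\x)V^{\ell}(\x)^*)^{-1}\Phi_i(\x) = |V^{\ell}(\x)^{-1}\Phi_i(\x)|^2$, which combined with the previous display proves \eqref{:123p} in the generic case.

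Two boundary cases need separate remarks. If $\nn \le \ell$, then $\K^{\nn}_{\x} \equiv 0$ by \rref{rem:zx}~(2), and for $0 \le i \le \nn-1 \le \ell-1$ the vector $\Phi_i(\x)$ is literally a column of $V^{\ell}(\x)$, so $V^{\ell}(\x)^{-1}\Phi_i(\x)$ is a standard basis vector of norm $1$; the elementary inequality $(\sum a_i^2)^{1/2} \le \sum a_i$ applied to $a_i = \phi_i(|z|)$ then reduces \eqref{:123p} to $\K^{\nn}(z,z)^{1/2} \le \sum_{i=0}^{\nn-1}\phi_i(|z|)$, which is immediate. If $\nn > \ell$ but $V^{\ell}(\x)$ is singular, I would approximate $\x$ by $\y \in \mathcal{I}^{\ell}$, apply the established inequality for each such $\y$, and pass to the limit: the left-hand side is continuous in $\x$ by the continuous version of the Palm kernel from \lref{conti-version}, while taking the $\liminf$ on the right produces precisely the quantity in \eqref{:123q}. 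The main technical hurdle is recognizing the Loewner-monotonicity step; once one sees that discarding the last $\nn - \ell$ columns of $V^{\nn}(\x)$ can only decrease $V^{\nn}(\x)V^{\nn}(\x)^*$ in the positive-definite order, the rest is a triangle inequality followed by a continuous-extension argument.
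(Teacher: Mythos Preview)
Your proof is correct and follows essentially the same route as the paper's: both use the Palm formula \eqref{:121e} to write the difference as a quadratic form, invoke the Loewner inequality $\K^{\nn}(\x,\x)\ge V^{\ell}(\x)V^{\ell}(\x)^*$, apply the triangle inequality via the expansion \eqref{:121bb}, and then extend by continuity to the singular case. The only cosmetic difference is the order of the two middle steps---the paper first replaces $M^{-1}$ by $(V^{\ell}V^{\ell*})^{-1}$ and then applies the triangle inequality to $|V^{\ell}(\x)^{-1}\K^{\nn}(\x,z)|$, whereas you apply the triangle inequality to $|M^{-1/2}w(z)|$ first and then use monotonicity termwise; your explicit treatment of the case $\nn\le\ell$ is a nice addition that the paper leaves implicit.
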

\begin{proof}
We note that $\K^{\nn}(\x, \x) \ge  \K^{\ell}(\x, \x)=  V^{\ell}(\x)
 V^{\ell}(\x)^{*}$ from \eqref{:121b}. 
Suppose $V^{\ell}$ is invertible. 
Then, by \eqref{:121e} and \eqref{:121bb}, we see that 
\begin{align*}
\K^{\nn}(z,z) - \K_{\x}^{\nn}(z,z)
&=\K^{\nn}(z,\x) \K^{\nn}(\x, \x)^{-1} \K^{\nn}(\x, z) \\
&\le \K^{\nn}(z,\x) (V^{\ell}(\x) V^{\ell}(\x)^*)^{-1} \K^{\nn}(\x, z) \\
&= |V^{\ell}(\x)^{-1} \K^{\nn}(\x, z)|^2 \\
&\le \Big(
\sum_{i=0}^{\nn-1} 
|V^{\ell}(\x)^{-1} \Phi_i(\x)| \, \phi_i(|z|) 
\Big)^2.  
\end{align*}
By continuity of $\K^{\nn}_{\x}(z,z)$ in $\x$, we obtain \eqref{:123p}
 with \eqref{:123q}. 
\qed\end{proof}

Although the right-hand side of \eqref{:123p} may diverge 
for $\x \in \mathcal{I}^{\ell}$ at this stage, 
we can see that it is finite indeed. 

\begin{lem} \label{l:121q} 
For $\x \in \C^{\ell}$ and $i \in \n \cup \{0\}$, 
\begin{equation}
 |V^{\ell}(\x)^{-1} \Phi_i(\x)|^2
= 
\begin{cases}
 1 & i =0,1, \dots, \ell-1, \\
\sum_{p=0}^{\ell-1} 
\frac{p!}{i!} |s_{\la_{i, \ell, p}}(\x)|^2 
& i \ge \ell, 
\end{cases}
\notag
\end{equation}
where $\lambda_{i, \ell, p}
= (i - \ell +1, \underbrace{1, \dots, 1}_{\ell-1-p}, 
\underbrace{0, \dots, 0}_{p})$. 
The left-hand side can be understood as a continuous extension in $\x$ 
even when $V^{\ell}(\x)$ is not invertible. 
\end{lem}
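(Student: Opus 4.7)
The plan is to compute the components of $V^{\ell}(\x)^{-1}\Phi_i(\x)$ via Cramer's rule, identify the numerators with ratios of generalized Vandermonde determinants, and then recognize those ratios as Schur functions in the variables $\x$.

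First, for $i \in \{0,1,\dots,\ell-1\}$, the vector $\Phi_i(\x)$ is literally one of the columns of $V^{\ell}(\x)$ by the definition \eqref{:121k}, so $V^{\ell}(\x)^{-1}\Phi_i(\x)$ is a standard basis vector and has squared norm $1$. This disposes of the first case on the open set where $V^{\ell}(\x)$ is invertible, and the identity extends to all $\x$ by continuity since the squared norm is bounded.

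For $i \ge \ell$, I would apply Cramer's rule, writing $(V^{\ell}(\x)^{-1}\Phi_i(\x))_k = \det(M_k)/\det(V^{\ell}(\x))$, where $M_k$ is obtained from $V^{\ell}(\x)$ by replacing its $k$-th column $\Phi_{\ell-k}(\x)$ with $\Phi_i(\x)$. Factoring $1/\sqrt{j!}$ out of the column of powers $x_{\cdot}^{j}$ in each determinant separately, the ratio $|\det(M_k)/\det(V^{\ell}(\x))|^2$ becomes, up to the explicit factor $(\ell-k)!/i!$, the squared modulus of $a_{\mu}(\x)/a_{\delta_{\ell}}(\x)$, where $a_{\mu}$ is the antisymmetrized monomial determinant with column-exponents $\{i\}\cup\{0,1,\dots,\ell-1\}\setminus\{\ell-k\}$. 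Reading off the strict partition $\mu$ and subtracting $\delta_{\ell}=(\ell-1,\dots,0)$ yields the partition $\lambda_{i,\ell,\ell-k}=(i-\ell+1,\underbrace{1,\dots,1}_{k-1},\underbrace{0,\dots,0}_{\ell-k})$; by the definition $s_\lambda = a_{\lambda+\delta}/a_{\delta}$ recalled before \lref{l:121s}, this makes the ratio equal to $(-1)^{k-1}s_{\lambda_{i,\ell,\ell-k}}(\x)$. Squaring, substituting $p=\ell-k$, and summing over $k=1,\dots,\ell$ (equivalently over $p=0,\dots,\ell-1$) gives the claimed formula.

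Finally, to handle the continuous extension in \eqref{:123q}, I would note that the right-hand side $\sum_{p=0}^{\ell-1}\tfrac{p!}{i!}|s_{\lambda_{i,\ell,p}}(\x)|^{2}$ is a polynomial in $(\x,\overline{\x})$ and therefore continuous on all of $\C^{\ell}$, and the identity holds on the dense open set $\mathcal{I}^{\ell}$; hence it extends by continuity to the whole $\C^{\ell}$ and $\liminf$ in \eqref{:123q} becomes an ordinary limit.

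The main obstacle is just a careful bookkeeping of column orders and signs: the columns of $V^{\ell}(\x)$ are arranged in \emph{decreasing} order of the powers, so after replacing the $k$-th column by $\Phi_{i}(\x)$ one must reorder to decreasing order of exponents and track the resulting sign $(-1)^{k-1}$, and then convert the shift of the strict partition by $\delta_{\ell}$ into the partition $\lambda_{i,\ell,p}$ with $p=\ell-k$. Once this is done consistently, the Schur-function identification is immediate from \eqref{:avan}.
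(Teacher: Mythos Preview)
Your proposal is correct and follows essentially the same route as the paper: Cramer's rule on the columns of $V^{\ell}(\x)$, factoring out the $1/\sqrt{j!}$'s to reduce to a ratio of generalized Vandermonde determinants, and identifying that ratio as $(-1)^{k-1}s_{\lambda_{i,\ell,\ell-k}}(\x)$ via the definition $s_{\lambda}=a_{\lambda+\delta}/a_{\delta}$, with the continuous extension handled by the polynomiality of the right-hand side. The only cosmetic difference is indexing: the paper works directly with the $(\ell-p)$-th component and obtains $(-1)^{\ell-1-p}(p!/i!)^{1/2}s_{\lambda_{i,\ell,p}}(\x)$, which is exactly your formula under the substitution $p=\ell-k$.
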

\begin{proof}
First suppose $\x \in \mathcal{I}^{\ell}$ as in \lref{l:121p}.  
For $p=0,\dots, \ell-1$, the $(\ell-p)$-th element of the column vector 
$V^{\ell}(\x)^{-1} \Phi_i(\x)$ is given by Cramer's rule:
\begin{align}
(V^{\ell}(\x)^{-1} \Phi_i(\x))_{\ell -p}
&= 
\frac{(\Phi_{\ell-1}(\x), \dots, 
\Phi_{p+1}(\x), \Phi_i(\x), \Phi_{p-1}(\x), 
\dots, \Phi_0(\x))}{(\Phi_{\ell-1}(\x), \Phi_{\ell-2}(\x),
 \dots, \Phi_0(\x))} 
\notag\\
&= 
\begin{cases}
\delta_{i,p} & i \le \ell-1, \\ 
(-1)^{\ell-1-p} (\frac{p!}{i!})^{1/2} s_{\la_{i, \ell, p}}(\x) 
& i \ge \ell. 
\end{cases}
\label{:123r}
\end{align}
Summing up these equalities for $p=0,1,\dots, \ell-1$ yields the
 assertion for $\x \in \mathcal{I}^{\ell}$. 
Since the right-hand side of \eqref{:123r} is continuous in $\x$, we
 conclude that 
$\liminf_{\y \to \x, \y \in \mathcal{I}^{\ell}}$ in \eqref{:123q} can be
 replaced by $\lim_{\y \to \x, \y \in \mathcal{I}^{\ell}}$. 
Therefore, $V^{\ell}(\x)^{-1} \Phi_i(\x)$ can be understood as 
a continuous function on $\C^{\ell}$. 
\qed\end{proof}

\begin{lem}\label{l:125a}
Let $C(\x) := \prod_{j=1}^{\ell} (1+|x_j|)$ for $\x \in \C^{\ell}$. 
Then, for each $i \in \n \cup \{0\}$
\[
 |V^{\ell}(\x)^{-1} \Phi_i(\x)| \le (\frac{(\ell-1)!}{i!})^{1/2}
 C(\x)^i. 
\]
\end{lem}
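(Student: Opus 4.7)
The plan is to deduce this directly from the two preceding lemmas, \lref{l:121q} and \lref{l:121s}, with a short case split.

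First I would handle the easy range $0 \le i \le \ell-1$. By \lref{l:121q} we have $|V^{\ell}(\x)^{-1}\Phi_i(\x)|^2 = 1$, while the right-hand side of the desired inequality satisfies $\frac{(\ell-1)!}{i!} \ge 1$ and $C(\x)^i \ge 1$ (since $C(\x) \ge 1$). So the bound is trivial in this range, and no properties of Schur functions are needed.

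Next, for $i \ge \ell$, the content of \lref{l:121q} gives the exact identity
\[
|V^{\ell}(\x)^{-1}\Phi_i(\x)|^2 = \sum_{p=0}^{\ell-1} \frac{p!}{i!}\,|s_{\lambda_{i,\ell,p}}(\x)|^2,
\qquad \lambda_{i,\ell,p}=(i-\ell+1,\underbrace{1,\dots,1}_{\ell-1-p},\underbrace{0,\dots,0}_p).
\]
The two inputs I would use are (a) $p! \le (\ell-1)!$ for each $p$ in the sum, which lets me factor $(\ell-1)!/i!$ out, and (b) the weight estimate $|\lambda_{i,\ell,p}| = (i-\ell+1)+(\ell-1-p) = i-p \le i$. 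Together these give
\[
|V^{\ell}(\x)^{-1}\Phi_i(\x)|^2 \le \frac{(\ell-1)!}{i!} \sum_{p=0}^{\ell-1} |s_{\lambda_{i,\ell,p}}(\x)|^2
\le \frac{(\ell-1)!}{i!} \sum_{\lambda:\,|\lambda|\le i} |s_{\lambda}(\x)|^2.
\]

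Finally I would apply \lref{l:121s}, which bounds the last sum by $C(\x)^{2i}$, and take square roots. There is really no main obstacle here: the proof is a one-line chain of inequalities once we notice that all the partitions $\lambda_{i,\ell,p}$ appearing in \lref{l:121q} have total weight at most $i$, which is exactly what is needed to feed into \lref{l:121s}.
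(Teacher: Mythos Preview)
Your proof is correct and follows essentially the same approach as the paper's own proof: case split on $i$, use \lref{l:121q} for the exact formula, bound $p! \le (\ell-1)!$, observe $|\lambda_{i,\ell,p}| = i-p \le i$, and apply \lref{l:121s}.
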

\begin{proof}
From \lref{l:121q} and \eqref{:121ss}, 
since $|\la_{i,\ell, p}| = i - p \le i$, 
\[
|V^{\ell}(\x)^{-1} \Phi_i(\x)|^2
= \sum_{p=0}^{\ell-1} \frac{p!}{i!}
|s_{\la_{i,\ell, p}}(\x)|^2
\le \frac{(\ell-1)!}{i!} \sum_{\la : |\la| \le i} 
|s_{\la}(\x)|^2
\le \frac{(\ell-1)!}{i!} C(\x)^{2i}
\]
for $i \ge \ell$, and it is clear that 
$|V^{\ell}(\x)^{-1} \Phi_i(\x)|^2 = 1 \le \frac{(\ell-1)!}{i!} C(\x)^{2i}$ 
for $0 \le i \le \ell-1$. Hence, we obtain the assertion.  
\qed\end{proof}

\section{Uniform estimate for variances}\label{s:6new}
We will use the symbol $\nn$ for the number of particles in 
$\nn$-particle approximations $\G^{\nn}$ and $\G^{\nn}_{\x}$, respectively. 

For later use, we show uniform boundedness for the 
variances of $\nn$-particle approximation $\G^{\nn}_{\x}$ 
with $\nn \in \n \cup \{\infty\}$ and $\x \in \C^{\mm}$ 
for $\mm \in \{0\} \cup \n$. 
Hereafter, we set $ \G ^{\nn}= \G $ if $ \nn = \infty $ and 
$\G^{\nn}_{\x} = \G^{\nn}$ for $\x \in \C^0$. 

\begin{lem}\label{var-monotone} 
Let $h : (0,\infty) \to \C$ be a bounded measurable function 
and put $g_p(z) = h(|z|)
 (\frac{|z|}{z})^p$ for $p \in \z$. 
Let 
\begin{equation}
I_{\nn}(p) 
:= \var^{\G^{\nn}}(\bra \xis, g_p \ket) 
+ \var^{\G_{\mathbf{o}_{\nn}}}(\bra \xis, g_p \ket)
- \var^{\G}(\bra \xis, g_p \ket). 
\notag
\label{decomp}
\end{equation}
Then, 
\begin{equation}
 I_{\nn}(p) = 
\sum_{k= (\nn-|p|)_{+}}^{\nn-1} 
\frac{1}{k! (k+|p|)!}
\left|\int_0^{\infty} h(\sqrt{t}) t^{k+\frac{|p|}{2}} e^{-t} dt \right|^2 
\le |p| \cdot \|h\|_{\infty}^2, 
\label{inp}
\end{equation}
where $(\nn-|p|)_{+} = \max(\nn-|p|, 0)$. 
In particular, 
\begin{equation}
\sup_{\nn \in \n \cup\{\infty\}} 
\var^{\G^{\nn}}(\bra \xis, g_p \ket) 
\le  \var^{\G}(\bra \xis, g_p \ket)
+ |p| \cdot \|h\|_{\infty}^2. 
\notag
\label{decomp2}
\end{equation}
\end{lem}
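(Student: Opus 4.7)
The plan is to exploit the orthogonal decomposition $\K = \K^{\nn} + \K_{\mathbf{o}_{\nn}}$ of the Ginibre projection kernels together with the variance formula (\ref{var0}) of \lref{l:35}. By \eref{version}, $\G_{\mathbf{o}_{\nn}}$ is the DPP with kernel $\K_{\mathbf{o}_{\nn}}(z,w)=\sum_{k\ge \nn}(z\wbar)^{k}/k!$, so all three kernels admit the common form $K(z,w)=\sum_{k\in S}\phi_k(z)\overline{\phi_k(w)}$ with $\phi_k(z)=z^k/\sqrt{k!}$ and index sets $S^{\nn}=\{0,\ldots,\nn-1\}$, $S^{\mathbf{o}_{\nn}}=\{\nn,\nn+1,\ldots\}$ and $S^{\infty}=S^{\nn}\sqcup S^{\mathbf{o}_{\nn}}$. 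Consequently $\K(z,z)=\K^{\nn}(z,z)+\K_{\mathbf{o}_{\nn}}(z,z)$ pointwise, and since $|g_p(z)|^{2}=|h(|z|)|^{2}$ is independent of $p$, the first (diagonal) term of (\ref{var0}) cancels identically in the combination $I_{\nn}(p)$. The problem therefore reduces to computing the off-diagonal contribution.

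For that term I would pass to polar coordinates $z=re^{i\theta}$, $w=\rho e^{i\phi}$. Since $g_p(z)=h(r)e^{-ip\theta}$ and
\[
|K(z,w)|^{2}=\sum_{j,k\in S}\frac{r^{j+k}\rho^{j+k}}{j!\,k!}\,e^{i(j-k)(\theta-\phi)},
\]
the angular integrations yield $\int_{0}^{2\pi}\!\int_{0}^{2\pi}e^{i(j-k-p)(\theta-\phi)}d\theta d\phi=(2\pi)^{2}\delta_{j-k,p}$, and the substitution $t=r^{2},\,s=\rho^{2}$ converts what remains into a modulus-squared, giving
\[
\int g_p(z)\overline{g_p(w)}|K(z,w)|^{2}\gsf(dz)\gsf(dw)=\sum_{\substack{j,k\in S\\ j-k=p}}\frac{1}{j!\,k!}\left|\int_{0}^{\infty}h(\sqrt{t})\,t^{(j+k)/2}e^{-t}dt\right|^{2}.
\]
For $S=S^{\infty}$, the pairs $(j,k)$ with $j-k=p$ that are \emph{not} contained in $S^{\nn}\times S^{\nn}$ or $S^{\mathbf{o}_{\nn}}\times S^{\mathbf{o}_{\nn}}$ are the ``cross pairs'' with one index $<\nn$ and the other $\ge\nn$. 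For $p>0$ such pairs require $j\ge \nn>k$ with $j=k+p$, forcing $k\in\{(\nn-p)_{+},\ldots,\nn-1\}$; for $p<0$ the swap $j\leftrightarrow k$ yields the same index range with $|p|$ in place of $p$. Because $I_{\nn}(p)$ equals the negative of the signed sum of off-diagonal terms, collecting these missing cross pairs produces exactly formula~(\ref{inp}), with each summand manifestly non-negative.

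For the uniform bound I apply Cauchy--Schwarz directly to each summand:
\[
\left|\int_{0}^{\infty}h(\sqrt{t})\,t^{k+|p|/2}e^{-t}dt\right|^{2}\le \Bigl(\int_{0}^{\infty}|h(\sqrt{t})|^{2}\,t^{k}e^{-t}dt\Bigr)\Bigl(\int_{0}^{\infty}t^{k+|p|}e^{-t}dt\Bigr)\le \|h\|_{\infty}^{2}\,k!\,(k+|p|)!,
\]
so each term of (\ref{inp}) is at most $\|h\|_{\infty}^{2}$. The summation range $\{(\nn-|p|)_{+},\ldots,\nn-1\}$ contains exactly $\min(\nn,|p|)\le|p|$ integers, hence $I_{\nn}(p)\le|p|\,\|h\|_{\infty}^{2}$. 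The ``in particular'' assertion then follows from $\var^{\G^{\nn}}(\bra\xis,g_p\ket)-\var^{\G}(\bra\xis,g_p\ket)\le I_{\nn}(p)$, which uses $\var^{\G_{\mathbf{o}_{\nn}}}(\bra\xis,g_p\ket)\ge 0$, with the case $\nn=\infty$ trivial since $\G^{\infty}=\G$. I expect no conceptual difficulty: the only delicate point is keeping the combinatorics transparent, especially absorbing both signs of $p$ and the boundary case $\nn<|p|$ uniformly through the $(\nn-|p|)_{+}$ notation.
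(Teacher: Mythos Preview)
Your argument is correct and follows essentially the same route as the paper's own proof: exploit the decomposition $\K=\K^{\nn}+\K_{\mathbf{o}_{\nn}}$ so that the diagonal part of the variance formula \eqref{var0} cancels in $I_{\nn}(p)$, then compute the surviving off-diagonal contribution by expanding in the orthonormal basis $\{\phi_k\}$ and integrating in polar coordinates, and finally bound each summand by Cauchy--Schwarz. The only cosmetic difference is that the paper first rewrites the surviving piece as $2\int g_p\overline{g_p}\,\Re(\K^{\nn}\overline{\K_{\mathbf{o}_{\nn}}})\,\gsf^{\otimes 2}$ and then expands, whereas you expand $|\K|^{2}$, $|\K^{\nn}|^{2}$, $|\K_{\mathbf{o}_{\nn}}|^{2}$ separately and identify the cross pairs; these are the same computation.
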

\begin{proof} 
We deduce from \eqref{:31f} and \eqref{:40b} that 
\begin{align}\label{:51e}&
 \K(z,w) = \K^{\nn}(z,w) + \K_{\mathbf{o}_{\nn}}(z,w). 
\end{align}
We easily see that kernels $ \K ^{\nn}$ and $ \K $ have 
the reproducing property with respect to $ \gsf $ in the sense of 
\eqref{reproducing}. Then so is $ \K_{\mathbf{o}_{\nn}} $ by 
\lref{l:reproducing}. 
Hence, from \eqref{var0} and \eqref{:51e} for the first equality, 
\eqref{:40b} and \eqref{:31f} 
 with $\ell=\nn$ for the second, we obtain 
\begin{align*}
I_{\nn}(p)
&= 
2 \int_{\C^2} g_p(z) \overline{g_p(w)} \Re(\K^{\nn}(z,w)
 \overline{\K_{\mathbf{o}_{\nn}}(z,w)}) \gsf(dz) \gsf(dw) \\
&= \sum_{k=0}^{\nn-1} \sum_{l=\nn}^{\infty} 
\frac{1}{k! l!} \delta_{l, k+|p|} 
\left|\int_{\C} h(|z|) |z|^{2k+|p|}\gsf(dz) \right|^2.  
\end{align*} 
This implies the equality in \eqref{inp}. 
By the Schwarz inequality, we obtain 
\[
\left|\int_0^{\infty} h(\sqrt{t}) t^{k+\frac{|p|}{2}} e^{-t} dt \right|^2
\le \|h\|_{\infty}^2 k! (k+|p|)!, 
\]
which implies the inequality in \eqref{inp}.
\qed\end{proof}

\begin{lem}\label{l:122x} 
Let $C(\x) = \prod_{j=1}^{\ell} (1+ |x_j|)$ for $\x \in \C^{\ell}$. 
Then, for all $z, w \in \C$, 
\begin{equation}
\sup_{\nn \in \n \cup \{\infty\}, \nn > \ell} 
|\K^{\nn}(z,w) - \K_{\x}^{\nn}(z,w)| 
\le (\ell-1)! \cdot e^{C(\x) (|z| + |w|)}. 
\label{:122a} 
\end{equation}
\end{lem}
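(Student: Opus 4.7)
The plan is to reduce the estimate to the kind of Cauchy--Schwarz bound used in the proof of \lref{l:121p}, but applied off-diagonal rather than on the diagonal, and then to invoke \lref{l:125a} to sum up the contribution of each mode $\Phi_p(\x)$. Explicitly, by the matrix identity \eqref{:121e} we have
\[
\K^{\nn}(z,w) - \K^{\nn}_{\x}(z,w) = \K^{\nn}(z,\x)\,\K^{\nn}(\x,\x)^{-1}\,\K^{\nn}(\x,w)
\]
whenever $\K^{\nn}(\x,\x)$ is invertible. Since $\K^{\nn}(\x,\x) = \sum_{p=0}^{\nn-1} \Phi_p(\x)\Phi_p(\x)^{*}$ by \eqref{:121b}, splitting off the first $\ell$ terms gives the operator inequality $\K^{\nn}(\x,\x) \ge V^{\ell}(\x) V^{\ell}(\x)^{*}$ for $\nn > \ell$, hence $\K^{\nn}(\x,\x)^{-1} \le (V^{\ell}(\x) V^{\ell}(\x)^{*})^{-1}$.

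Next I would apply the Cauchy--Schwarz inequality to the positive bilinear form $(u,v)\mapsto u^{*} \K^{\nn}(\x,\x)^{-1} v$ and combine it with the above operator inequality to obtain
\[
|\K^{\nn}(z,w) - \K^{\nn}_{\x}(z,w)| \le \bigl|V^{\ell}(\x)^{-1}\K^{\nn}(\x,z)\bigr|\cdot \bigl|V^{\ell}(\x)^{-1}\K^{\nn}(\x,w)\bigr|.
\]
Using the expansion $\K^{\nn}(\x,z) = \sum_{p=0}^{\nn-1}\Phi_p(\x)\overline{\phi_p(z)}$ from \eqref{:121bb} and the triangle inequality, each of these factors is bounded by $\sum_{p=0}^{\nn-1} |V^{\ell}(\x)^{-1}\Phi_p(\x)|\,\phi_p(|z|)$, which is exactly the sum that appears in \eqref{:123p}.

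Then \lref{l:125a} yields $|V^{\ell}(\x)^{-1}\Phi_p(\x)| \le \sqrt{(\ell-1)!/p!}\,C(\x)^{p}$, so that
\[
\sum_{p=0}^{\nn-1} |V^{\ell}(\x)^{-1}\Phi_p(\x)|\,\phi_p(|z|) \le \sqrt{(\ell-1)!}\sum_{p=0}^{\infty} \frac{(C(\x)|z|)^{p}}{p!} = \sqrt{(\ell-1)!}\,e^{C(\x)|z|},
\]
and similarly for $w$. Multiplying the two bounds gives the desired inequality \eqref{:122a}, uniformly in $\nn$. Finally, to cover the case $\nn = \infty$ (i.e.\ $\K$ in place of $\K^{\nn}$) and the case where $V^{\ell}(\x)$ is not invertible, I would pass to the limit using the uniform-on-compacts convergence $\K^{\nn}\to\K$ together with the continuous-version argument from \lref{conti-version} and the continuity noted after \lref{l:121q}, so that both sides of \eqref{:122a} extend continuously in $(\nn,\x)$. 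The only mildly delicate point is the operator inequality step combined with Cauchy--Schwarz; once that is written down carefully, the rest is a geometric-series estimate.
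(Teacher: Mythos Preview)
Your argument is correct and is essentially the paper's own proof. The paper first applies Cauchy--Schwarz for the sesquilinear form $u^{*}\K^{\nn}(\x,\x)^{-1}v$ to reduce to the diagonal estimate \eqref{:121aa}, then quotes \lref{l:121p} and \lref{l:125a} to bound each diagonal factor by $\sqrt{(\ell-1)!}\,e^{C(\x)|z|}$; you merge the Cauchy--Schwarz step with the operator inequality $\K^{\nn}(\x,\x)^{-1}\le (V^{\ell}(\x)V^{\ell}(\x)^{*})^{-1}$ and then reproduce the computation of \lref{l:121p} inline, arriving at the same sum and invoking \lref{l:125a} in the same way. The handling of noninvertible $V^{\ell}(\x)$ and of $\nn=\infty$ by continuity/limits also matches the paper.
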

\begin{proof}
Suppose $\det \K^{\nn}(\x,\x) > 0$. By \eqref{:121e}, we have 
\begin{equation}
 \K^{\nn}(z,w) - \K_{\x}^{\nn}(z,w)
= 
\K^{\nn}(z, \x)
\K^{\nn}(\x, \x)^{-1} \K^{\nn}(\x, w). 
\notag
\end{equation}
Since the right-hand side is a quadratic form, 
by the Schwarz inequality, we have 
\begin{equation}
| \K^{\nn}(z,w) - \K_{\x}^{\nn}(z,w)|
\le |\K^{\nn}(z,z) - \K_{\x}^{\nn}(z,z)|^{1/2} \cdot |\K^{\nn}(w,w) -
 \K^{\nn}_{\x}(w,w)|^{1/2}. 
\label{:121aa}
\end{equation}
By the continuity of $\K^{\nn}$ in $\x$, 
the above inequality holds for every $\x \in \C^{\ell}$. 

On the other hand, by \lref{l:121p} and \lref{l:125a}, we have 
 \begin{align*}
(\K^{\nn}(z,z) - \K_{\x}^{\nn}(z,z))^{1/2}
&\le \sum_{i=0}^{\nn-1} |V^{\ell}(\x)^{-1} \Phi_i(\x)| \, \phi_i(|z|)\\
&\le \sum_{i=0}^{\infty} (\frac{(\ell-1)!}{i!})^{1/2} C(\x)^i 
\cdot \frac{|z|^i}{(i!)^{1/2}}= ((\ell-1)!)^{1/2} e^{C(\x) |z|}. 
\end{align*}
Combining this with \eqref{:121aa} yields the desired uniform estimate.
\qed\end{proof}

\begin{rem}\label{rem:1cor}
Setting $z=w$ in \eqref{:122a} yields 
\begin{equation}
\sup_{\nn \in \n \cup \{\infty\}, \nn > \mm}
|\tilde{\rho}_1^{\nn}(z) - \tilde{\rho}_{1,\x}^{\nn}(z)| 
\le (\ell-1)! e^{C(\x)^2} e^{-(|z| - C(\x))^2}, 
\label{:1cor}
\end{equation}
where $\tilde{\rho}_1^{\nn}(z) = \gsf(z)^{1/2} \K^{\nn}(z,z)
 \gsf(z)^{1/2}$ is the $1$-correlation function of $\G^{\nn}$ with respect to 
the Lebesgue measure. 
\end{rem}

\begin{lem}\label{close2palm}
Let $f : \C \to \C$ be a measurable function. 
Then, for $\x \in \C^{\mm}$, 
there exists a positive constant $\tilde{C}(\x) >0$ such that 
\begin{equation}
\sup_{ \nn \in \n \cup \{\infty\},\, \nn > \mm }
| \var^{\G^{\nn}}(\bra \xis, f \ket)
- \var^{\G^{\nn}_{\x}}(\bra \xis, f \ket)| 
\le \tilde{C}(\x) \|f\|_{L^{\infty}(\C)}^2. 
\label{:420} 
\end{equation}
\end{lem}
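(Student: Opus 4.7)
The plan is to reduce the estimate to the reproducing-kernel variance identity \eqref{var} and then to a uniform $L^{1}(\gsf\otimes\gsf)$-bound on $\bigl||\K^{\nn}(z,w)|^{2}-|\K^{\nn}_{\x}(z,w)|^{2}\bigr|$, the latter being an essentially routine consequence of the pointwise bound in \lref{l:122x}.

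First I would observe that both $\K^{\nn}$ and $\K^{\nn}_{\x}$ have the reproducing property with respect to $\gsf$: for $\K^{\nn}$ this follows from the orthonormality of the monomials $\{\phi_{k}\}$, and for $\K^{\nn}_{\x}$ it follows from \lref{l:reproducing} (for $\nn=\infty$ we adopt the convention $\K^{\nn}=\K$ and $\G^{\nn}=\G$). Hence \eqref{var} applies to both DPPs, and subtraction gives
\begin{equation*}
\var^{\G^{\nn}}(\bra\xis,f\ket)-\var^{\G^{\nn}_{\x}}(\bra\xis,f\ket)=\frac{1}{2}\int_{\C^{2}}|f(z)-f(w)|^{2}\bigl(|\K^{\nn}(z,w)|^{2}-|\K^{\nn}_{\x}(z,w)|^{2}\bigr)\gsf(dz)\gsf(dw).
\end{equation*}
Bounding $|f(z)-f(w)|^{2}\le 4\|f\|_{L^{\infty}(\C)}^{2}$ reduces \eqref{:420} to producing, uniformly in $\nn>\mm$, an estimate
\begin{equation*}
\int_{\C^{2}}\bigl||\K^{\nn}(z,w)|^{2}-|\K^{\nn}_{\x}(z,w)|^{2}\bigr|\gsf(dz)\gsf(dw)\le C(\x).
\end{equation*}

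Next I would factor
\begin{equation*}
\bigl||\K^{\nn}(z,w)|^{2}-|\K^{\nn}_{\x}(z,w)|^{2}\bigr|\le |\K^{\nn}(z,w)-\K^{\nn}_{\x}(z,w)|\cdot\bigl(|\K^{\nn}(z,w)|+|\K^{\nn}_{\x}(z,w)|\bigr).
\end{equation*}
The first factor is dominated uniformly in $\nn$ by $(\mm-1)!\,e^{C(\x)(|z|+|w|)}$ thanks to \lref{l:122x}, and the second by the Schwarz inequality \eqref{schwarz} together with the monotonicity \eqref{decrease}:
\begin{equation*}
|\K^{\nn}(z,w)|+|\K^{\nn}_{\x}(z,w)|\le 2\K(z,z)^{1/2}\K(w,w)^{1/2}=2\,e^{(|z|^{2}+|w|^{2})/2}.
\end{equation*}
Multiplying by $\gsf(z)\gsf(w)=\pi^{-2}e^{-|z|^{2}-|w|^{2}}$ leaves an integrand bounded uniformly in $\nn$ by a constant multiple of $e^{C(\x)(|z|+|w|)-(|z|^{2}+|w|^{2})/2}$, and the resulting Gaussian integral is a finite quantity $\tilde{C}(\x)$ depending only on $\x$ (and $\mm$).

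The main technical input is precisely the uniform-in-$\nn$ pointwise bound of \lref{l:122x}, already established; without it one would have only $|\K^{\nn}(z,w)|^{2}$ itself, whose $L^{1}(\gsf\otimes\gsf)$-norm equals $\nn$ by the reproducing identity and blows up as $\nn\to\infty$. The factorization of $|K|^{2}-|K'|^{2}$ above is the device that converts the already-proved uniform pointwise bound into a uniform $L^{1}$ bound, after which the remainder is routine Gaussian integration.
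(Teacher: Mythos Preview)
Your proof is correct and follows essentially the same route as the paper: apply the reproducing-kernel variance identity \eqref{var} to both $\G^{\nn}$ and $\G^{\nn}_{\x}$, subtract, factor $\bigl||\K^{\nn}|^{2}-|\K^{\nn}_{\x}|^{2}\bigr|\le |\K^{\nn}-\K^{\nn}_{\x}|\,(|\K^{\nn}|+|\K^{\nn}_{\x}|)$, control the difference by \lref{l:122x} and the sum by \eqref{schwarz}--\eqref{decrease}, and integrate the resulting Gaussian. Your write-up is in fact more explicit than the paper's, which compresses the factorization step into the single line $|\mathsf I(z,w)|\le 2|\K^{\nn}(z,w)-\K^{\nn}_{\x}(z,w)|$ before combining with the weight.
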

\begin{proof} 
Let $\mathsf{I} (z,w)= |\K^{\nn} (z,w)|^2  - 
|\K^{\nn}_{\x}(z,w)|^2$. Then, 
from \eqref{var}, we see that  
\[
\var^{\G^{\nn}}(\bra \xis, f \ket)
- \var^{\G^{\nn}_{\x}}(\bra \xis, f \ket)
= \frac{1}{2}\int_{\C^2} |f(z)-f(w)|^2 \mathsf{I} (z,w) \pi^{-2}
 e^{-|z|^2-|w|^2} dzdw. 
\]
It is easy to see from \eqref{decrease}, \eqref{schwarz} and 
\lref{l:122x} that 
\[
|\mathsf{I} (z,w)|
\le 2 |\K^{\nn}(z,w) - \K^{\nn}_{\x}(z,w)|
\le 2 (\ell-1)! \cdot e^{C(\x) (|z| + |w|)}. 
\]
Combining these, we obtain \eqref{:420}. 
\qed\end{proof}

\section{Palm measures and singularity}\label{s:6}

In this section, we prove that $\G_{\x}$ and $\G_{\y}$ for 
$\x \in \C^{\mm}$ and $\y \in \C^{\nnn}$ are singular each other 
if $\mm \not= \nnn$. 
To this end, it is sufficient to show that so are 
$\G_{\oo_{\mm}}$ and $\G_{\oo_{\nnn}}$ for $\mm \not= \nnn$ since 
\[
 \G_{\x} \sim \G_{\oo_{\mm}} \perp  \G_{\oo_{\nnn}} \sim \G_{\y} 
\]
from the first part of Theorem~\ref{main1}.  
By \corref{:46a}, 
the point process over 
$[0,\infty)$ consisting of the square of 
moduli of $\G_{\oo_{\mm}}$ is equal in law to 
\begin{equation}
\eta_{\mm} := \sum_{i={\mm}+1}^{\infty} \delta_{Y_i}, 
\label{eq:eta_n}
\end{equation}
where $\{Y_i, i=1,2,\dots\}$ are as in \pref{kostlan}. 
That is,  the law of $\eta_{\mm}$ is equal to 
\begin{equation}
H_{\mm} := \G_{\oo_{\mm}} \circ \Theta^{-1} 
\quad (\mm \in \n \cup \{0\}). 
\notag
\end{equation}
Here $\{\eta_{\mm}\}_{\mm=0}^{\infty}$ are defined 
on a common probability space $(\Omega, \mathcal{F}, \Prob)$. 

If $H_{\mm}$ and $H_{\nnn}$ are singular each other, 
so are $\G_{\oo_{\mm}}$ and $\G_{\oo_{\nnn}}$, 
and then we first focus on the singularity of $H_{\mm}$'s 
instead of $\G_{\oo_{\mm}}$'s. 

\subsection{Ces\`aro mean of counting functions}

We define a function $\ft : Q([0,\infty)) \to \mathbb{R}$ for $T>0$ by 
\begin{align}\label{:FT}&
\ft(\eta) = \frac{1}{T} \int_0^T \{\eta([0,r]) -r \}dr.  
\end{align}
We remark that the function $\fft$ defined in \eqref{:13a} 
can be expressed as a lift of $\ft$ by $\Theta$, 
that is, $ \fft = \ft \circ \Theta$. 
We also note that 
\begin{equation}
 \frac{1}{T} \int_0^T \eta([0,r]) dr 
= \Big \bra \eta, \max(1-\frac{x}{T}, 0) \Big\ket.  
\notag
\end{equation}
Then it follows from \eqref{eq:eta_n} that 
for $\mm, \nnn \in \n \cup \{0\}$ with $\mm \le \nnn$, 
\begin{equation}
\ft(\eta_{\mm}) - \ft(\eta_{\nnn}) 
= \sum_{i=\mm+1}^{\nnn} \max\left(1-\frac{Y_i}{T}, 0\right). 
\label{FT1}
\end{equation}

\begin{lem}\label{P-as-f} 
For $\mm, \nnn \in \n \cup \{0\}$, 
\begin{equation}
\lim_{T \to \infty} \{\ft(\eta_{\mm}) - \ft(\eta_{\nnn})\} = \nnn - \mm 
\quad \text{$\Prob$-a.s.} 
\label{FT2}
\end{equation}
\end{lem}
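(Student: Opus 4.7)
The proof plan is straightforward once we exploit the decomposition (FT1). I would proceed as follows.

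First I would handle the case $\mm \le \nnn$. Starting from the representation
\[
f_T(\eta_{\mm}) - f_T(\eta_{\nnn}) = \sum_{i=\mm+1}^{\nnn} \max\!\left(1-\frac{Y_i}{T},\, 0\right)
\]
supplied by (FT1), I observe that this is a \emph{finite} sum of exactly $\nnn-\mm$ terms. Because $Y_i$ has the $\Gamma(i,1)$ distribution (\pref{kostlan}), each $Y_i$ is almost surely finite, so there exists a (random) $T_i$ with $Y_i<T$ for all $T\ge T_i$; for such $T$ the $i$-th summand equals $1 - Y_i/T$, which converges to $1$ as $T\to\infty$. Hence, for each $i\in\{\mm+1,\dots,\nnn\}$,
\[
\lim_{T\to\infty}\max\!\left(1-\frac{Y_i}{T},\, 0\right)=1 \quad \Prob\text{-a.s.}
\]
Summing these $\nnn-\mm$ almost-sure limits (no dominated convergence or interchange issue is needed since the sum is finite) gives
\[
\lim_{T\to\infty}\{f_T(\eta_{\mm}) - f_T(\eta_{\nnn})\} = \nnn-\mm \quad \Prob\text{-a.s.}
\]

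The case $\mm>\nnn$ follows by antisymmetry of the left-hand side in $(\mm,\nnn)$: applying the previous step with the roles of $\mm$ and $\nnn$ swapped yields $f_T(\eta_{\nnn})-f_T(\eta_{\mm})\to \mm-\nnn$ a.s., and negating gives the claim. The case $\mm=\nnn$ is trivial since both sides are identically zero.

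There is essentially no obstacle here, because the hard analytic work has already been performed in establishing (FT1) via the Kostlan-type representation of \corref{:46a}. The only point worth stating carefully is that the reduction to a finite sum eliminates any tail/uniform integrability concern; almost-sure finiteness of each $Y_i$ individually is enough.
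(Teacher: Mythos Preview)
Your proof is correct and follows the same approach as the paper, which simply states that the assertion is immediate from (FT1). You have merely spelled out the one-line argument in detail: the finiteness of the sum and the almost-sure finiteness of each $Y_i$ give the pointwise limit.
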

\begin{proof} 
The assertion immediately follows from \eqref{FT1}. 
\qed\end{proof}

The next lemma, especially the estimate (\ref{bdd-var}) 
for the variance, is crucial for the proof of singularity. 
\begin{lem}\label{mean-var}
For each $\mm \in \n \cup \{0\}$, as $T \to \infty$,  
\begin{equation}
\E[\ft(\eta_{\mm})] \to - \mm, 
\label{mean}
\end{equation}
\begin{equation}
\var[\ft(\eta_{\mm})] = O(1). 
\label{bdd-var}
\end{equation}
\end{lem}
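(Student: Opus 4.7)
The plan is to reduce everything to sums of independent random variables via Kostlan's theorem (Proposition~\ref{kostlan} and Corollary~\ref{:46a}). Writing $\eta_{\mm}=\eta_0-\sum_{i=1}^{\mm}\delta_{Y_i}$ under $\Prob$, a direct computation from \eqref{:FT} yields
\[
\ft(\eta_{\mm})=\ft(\eta_0)-\sum_{i=1}^{\mm}\Big(1-\frac{Y_i}{T}\Big)_{+},
\]
so the problem splits into the unconditioned quantity $\ft(\eta_0)$ and a finite correction driven by the first $\mm$ independent Gammas.

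For the mean assertion \eqref{mean}, I would first observe that the Ginibre one-point density with respect to Lebesgue measure is the constant $\pi^{-1}$ (since $\K(z,z)\gsf(z)=\pi^{-1}$), so $\E^{\G}[\xis(D_{\sqrt{r}})]=r$; transporting through $\Theta$ gives $\E[\eta_0([0,r])]=r$ and hence $\E[\ft(\eta_0)]=0$. Since $(1-Y_i/T)_{+}\in[0,1]$ and tends to $1$ almost surely as $T\to\infty$, dominated convergence gives $\E[(1-Y_i/T)_{+}]\to 1$, and summing the $\mm$ contributions produces $\E[\ft(\eta_{\mm})]\to-\mm$.

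For \eqref{bdd-var}, subadditivity of variance together with the almost-sure bound $(1-Y_i/T)_{+}\le 1$ (so each correction term has variance at most $1$) reduces the problem to
\[
\var[\ft(\eta_{\mm})]\le 2\var[\ft(\eta_0)]+2\mm,
\]
and the remaining task is $\var[\ft(\eta_0)]=O(1)$. Since $\fft=\ft\circ\Theta$, this equals $\var^{\G}(\bra\xis,g_T\ket)$ with the smooth radial test function $g_T(z)=(1-|z|^2/T)_{+}$. I would invoke the reproducing-kernel variance formula \eqref{var}; using $|\K(z,w)|^2\gsf(dz)\gsf(dw)=\pi^{-2}e^{-|z-w|^2}dz\,dw$ and the substitution $w=z+v$, this becomes
\[
\var^{\G}(\bra\xis,g_T\ket)=\frac{1}{2\pi^2}\int_{\C^2}|g_T(z)-g_T(z+v)|^2 e^{-|v|^2}dz\,dv.
\]
The crux is that $g_T$ is Lipschitz in $|z|^2$ with constant $1/T$ and vanishes outside $\{|z|\le\sqrt{T}\}$: for fixed $v$ the integrand is supported on a $z$-set of Lebesgue area $O(T)$, and there $|g_T(z)-g_T(z+v)|^2\le(2|z|+|v|)^2|v|^2/T^2$, which is dominated by $C\bigl(|v|^2/T+|v|^4/T^2\bigr)$. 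Integration against $e^{-|v|^2}$ then yields a constant bound independent of $T$.

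The main obstacle is obtaining the $O(1)$ estimate on this integral rather than the naive $O(T)$ (using only $|g_T|\le 1$ over the $O(T)$ support) or the $O(\sqrt{T})$ scaling dictated by the boundary fluctuations \eqref{:11h} over a disk of radius $\sqrt{T}$. This genuinely requires exploiting Lipschitzness in $|z|^2$, which supplies the decisive factor $1/T^2$ coming from the gradient; the extra $1/T$ precisely cancels the area of the support, leaving an $O(1)$ bound. Once $\var[\ft(\eta_0)]=O(1)$ is secured, both \eqref{mean} and \eqref{bdd-var} follow at once from the decomposition above.
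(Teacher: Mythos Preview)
Your proof is correct. For \eqref{mean} your argument coincides with the paper's. For \eqref{bdd-var} the paper takes a slightly different route: it writes $g_T(z)=g(z/\sqrt{T})$ with $g(z)=\max(1-|z|^2,0)$ and invokes a general scaling lemma of Rider--Vir\'ag (Lemma~\ref{l:66}, specifically the $H^1\cap L^1$ bound \eqref{:66y}) to obtain $\sup_T \var^{\G}(\bra\xis,g_T\ket)<\infty$, then handles the correction $\ft(\eta_0)-\ft(\eta_{\mm})$ via independence of the $Y_i$ exactly as you do. Your argument instead computes the variance directly from the explicit Gaussian form $|\K(z,w)|^2\gsf(z)\gsf(w)=\pi^{-2}e^{-|z-w|^2}$ and a Lipschitz estimate on $g_T$, which is more elementary and self-contained (it amounts to reproving the special case of \eqref{:66y} needed here), while the paper's approach is shorter given the external reference and conceptually links the $O(1)$ variance to the finiteness of $\int|\nabla g|^2$.
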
 
\begin{proof}
Since $\E \eta_0([0,r]) = \sum_{i=1}^{\infty} \Prob(Y_i \le r) = r$, 
we have $\E[\ft(\eta_{0})]  =0$. 
On the other hand, we see from \eqref{FT1} that 
 $ \{ \ft(\eta_{\mm}) - \ft(\eta_{\nnn}) \}_{T>0} $ are bounded. 
Hence, from  (\ref{FT2}) and the bounded convergence theorem, 
 we deduce that for $\mm$ and $\nnn$ with $\mm \le \nnn$ 
\begin{align*}
\lim_{T \to \infty} 
\E[\ft(\eta_{\mm})  - \ft(\eta_{\nnn})] 
= \sum_{i=\mm+1}^{\nnn} 
\E \Big[ 
\lim_{T \to \infty} 
\max\left(1 - \frac{Y_i}{T}, 0 \right) \Big]
= \nnn - \mm.
\end{align*}
In particular, $\E[\ft(\eta_{\mm})] \to -\mm$. 
We have thus obtained \eqref{mean}. 

The proof of \eqref{bdd-var} will be given in the next subsection. 
\qed\end{proof}

\begin{lem}\label{P-as}
 We can choose $L^2(\Omega, \Prob)$-weak convergent subsequence 
$\ftk(\eta_{\mm})$ for $\mm \in \n \cup \{0\}$ with limits
$\flimit_{\mm}$: 
\begin{align}\label{:63a}&
 \lim_{k \to \infty} \ftk(\eta_{\mm}) = \flimit_{\mm} \quad 
\text{weakly in $L^2(\Omega, \Prob)$}. 
\end{align}
Moreover, we have 
\begin{equation}
\flimit_{\mm} - \flimit_{\nnn} = \nnn - \mm \quad \text{$\Prob$-a.s.}
\label{:63b}
\end{equation}
for any $\mm, \nnn \in \n \cup \{0\}$. 
\end{lem}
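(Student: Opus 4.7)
The plan is to combine three ingredients: the variance bound \eqref{bdd-var}, the mean asymptotics \eqref{mean}, and the almost sure convergence of differences from \lref{P-as-f}, together with the standard weak compactness of bounded sets in a Hilbert space plus a diagonal extraction.

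First I would verify that for each fixed $\mm \in \n \cup \{0\}$ the family $\{\ft(\eta_{\mm})\}_{T>0}$ is bounded in $L^2(\Omega, \Prob)$. By \eqref{mean} the means $\E[\ft(\eta_{\mm})]$ are bounded (they converge to $-\mm$), and by \eqref{bdd-var} the variances are $O(1)$. Therefore $\E[\ft(\eta_{\mm})^2] = \var[\ft(\eta_{\mm})] + \E[\ft(\eta_{\mm})]^2$ is bounded in $T$, so the family sits inside a fixed closed ball of $L^2(\Omega,\Prob)$.

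Next, since closed balls of a Hilbert space are weakly sequentially compact, for each $\mm$ there is a weakly convergent subsequence. To obtain a \emph{single} sequence $\{T_k\}$ that works simultaneously for all $\mm \in \n \cup \{0\}$, I would invoke Cantor's diagonal argument: extract a subsequence along which $\ft(\eta_0)$ converges weakly, then a sub-subsequence along which $\ft(\eta_1)$ also converges weakly, and so on; the diagonal sequence $\{T_k\}$ then yields weak limits $\flimit_{\mm}$ for every $\mm$ simultaneously, which is \eqref{:63a}.

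Finally, for the identity \eqref{:63b}, fix $\mm \le \nnn$. By \eqref{FT1} the difference $\ft(\eta_{\mm}) - \ft(\eta_{\nnn})$ is uniformly bounded (by $\nnn-\mm$), and by \lref{P-as-f} it converges $\Prob$-a.s.\ to $\nnn-\mm$. The bounded convergence theorem then upgrades this to convergence in $L^2(\Omega,\Prob)$, hence weakly in $L^2$. Since weak limits are unique and linear, passing to the limit along $T_k$ gives $\flimit_{\mm} - \flimit_{\nnn} = \nnn - \mm$ almost surely; the case $\mm > \nnn$ follows by symmetry.

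The only subtle point is the simultaneous extraction, and this is handled cleanly by the diagonal trick, so I do not expect a genuine obstacle here; the conceptual work has already been done in \lref{mean-var} and \lref{P-as-f}, and this lemma is essentially a packaging statement that converts the bounded variance estimate into a weak $L^2$ cluster point with the prescribed difference structure.
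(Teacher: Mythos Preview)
Your proposal is correct and follows essentially the same route as the paper: boundedness in $L^2$ from \eqref{mean} and \eqref{bdd-var}, weak sequential compactness plus a diagonal extraction for \eqref{:63a}, and the bounded (dominated) convergence theorem applied to the differences from \eqref{FT1} and \lref{P-as-f} to obtain \eqref{:63b}. The only cosmetic difference is that the paper cites \eqref{bdd-var} alone for the $L^2$ bound, while you spell out that both the mean and the variance are needed; your version is in fact the more careful one.
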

\begin{proof} 
For each $\mm$, $\{\ft(\eta_{\mm})\}_{T > 0}$  
is bounded in $L^2(\Omega, \Prob)$ 
by \eqref{bdd-var} and hence relatively compact weakly in $L^2(\Omega, \Prob)$.  
By a diagonal argument one can take convergent subsequences commonly in
 $\mm$. 
We have thus obtained the first claim. 
Since $|\ft(\eta_{\mm}) - \ft(\eta_{\nnn})| \le |\nnn- \mm|$ by \eqref{FT1}, 
by the dominated convergence theorem, (\ref{:63b}) 
follows from \eqref{FT2} and \eqref{:63a}. 
\qed\end{proof}

\begin{lem}\label{weakly}
 $\lim_{T \to \infty} \ft(\eta_{\mm}) = -\mm$ weakly in 
$L^2(\Omega, \Prob)$. In other words, 
$\lim_{T \to \infty} f_T = - \mm$  weakly in $L^2(Q([0,\infty)),
 H_{\mm})$. 
\end{lem}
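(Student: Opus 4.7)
The plan is to show that every weakly convergent subsequence of $\{\ft(\eta_{\mm})\}_{T>0}$ converges to the constant $-\mm$, and then invoke weak sequential compactness to upgrade this to convergence of the whole net. The $L^2$-boundedness needed for compactness is immediate: $\var[\ft(\eta_{\mm})] = O(1)$ combined with $\E[\ft(\eta_{\mm})] \to -\mm$ (\lref{mean-var}) controls the $L^2$-norm of $\ft(\eta_{\mm})$ uniformly in $T$, and $L^2(\Omega, \Prob)$, being a Hilbert space, has bounded sets weakly sequentially precompact; so uniqueness of weak limit points will suffice.

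First I would apply \lref{P-as} to an arbitrary sequence $T_k \to \infty$: a diagonal extraction yields a further subsequence, still denoted $T_k$, along which $\ftk(\eta_{\mm}) \to \flimit_{\mm}$ weakly in $L^2(\Omega, \Prob)$ simultaneously for every $\mm \in \n \cup \{0\}$, with $\flimit_{\mm} = \flimit_{0} - \mm$ a.s. Pairing with the constant test function $1 \in L^2$ and using the mean convergence in \lref{mean-var} forces $\E[\flimit_{0}] = 0$. The whole problem therefore reduces to proving $\flimit_{0} = 0$ almost surely.

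The main step, and the one I expect to be the crux, is to identify $\flimit_{0}$ as tail-measurable and invoke Kolmogorov's $0$-$1$ law. Since $\eta_{\mm} = \sum_{i > \mm} \delta_{Y_i}$, each $\ftk(\eta_{\mm})$ is measurable with respect to $\mathcal{F}_{\mm} := \sigma(Y_{\mm+1}, Y_{\mm+2}, \ldots)$. The subspace $L^2(\Omega, \mathcal{F}_{\mm}, \Prob)$ is norm-closed in $L^2(\Omega, \Prob)$ (it is the range of the orthogonal projection $\E[\cdot \mid \mathcal{F}_{\mm}]$) and therefore weakly closed, so $\flimit_{\mm}$ is itself $\mathcal{F}_{\mm}$-measurable. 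The identity $\flimit_{0} = \flimit_{\mm} + \mm$ then places $\flimit_{0}$ in $\mathcal{F}_{\mm}$ for every $\mm \ge 0$, hence in the tail $\sigma$-algebra $\bigcap_{\mm \ge 0}\mathcal{F}_{\mm}$. Because the $Y_i$ are independent by \pref{kostlan}, this tail $\sigma$-algebra is $\Prob$-trivial; Kolmogorov's $0$-$1$ law makes $\flimit_{0}$ a.s.\ constant, and the identity $\E[\flimit_{0}] = 0$ pins that constant to $0$.

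This yields $\flimit_{\mm} = -\mm$ a.s.\ along every weakly convergent subsequence, which together with the weak sequential precompactness proves $\ft(\eta_{\mm}) \to -\mm$ weakly in $L^2(\Omega, \Prob)$. The equivalent formulation on $L^2(Q([0,\infty)), H_{\mm})$ is then just the transport of this statement under the map $\Theta$, using $H_{\mm} = \G_{\oo_{\mm}} \circ \Theta^{-1}$ from \corref{:46a}. Everything outside the tail-triviality step is a packaging of the material already assembled in \lref{P-as-f}, \lref{mean-var}, and \lref{P-as}; the only non-routine ingredient is recognising that the radial Kostlan representation supplies the independence required to kill the tail $\sigma$-algebra.
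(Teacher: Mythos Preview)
Your proposal is correct and follows essentially the same approach as the paper: both establish that the subsequential weak limit $\flimit_0$ is tail-measurable with respect to the independent family $\{Y_i\}$, invoke Kolmogorov's $0$--$1$ law, and pin the constant via $\E[\flimit_0]=0$. The only cosmetic difference is in the tail-measurability step: the paper shows directly that the first $p$ centered summands $\sum_{i=1}^p\big(\max(1-Y_i/T,0)-\E[\max(1-Y_i/T,0)]\big)$ converge weakly to $0$, whereas you argue that $\flimit_{\mm}\in L^2(\Omega,\mathcal{F}_{\mm},\Prob)$ by weak closedness and then transfer measurability through $\flimit_0=\flimit_{\mm}+\mm$; these are two phrasings of the same observation.
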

\begin{proof}

We note that 
\[
\ft(\eta_0) = 
\sum_{i=1}^{\infty} 
\max(1 - \frac{Y_i}{T}, 0) - 
\sum_{i=1}^{\infty} \E[\max(1 - \frac{Y_i}{T}, 0)]
.\]
By the dominated convergence theorem, we see that for any $p \in \n$ 
\[
\sum_{i=1}^p 
\E\Big[
\Big(
\max(1 - \frac{Y_i}{T}, 0) - \E[\max(1 - \frac{Y_i}{T}, 0)]
\Big) 
Z \Big] 
\to 0 \quad \forall Z \in L^2(\Omega, \Prob), 
\]
from which we conclude that $\flimit_0$ is tail measurable. 
Hence, by Kolmogorov's $0$-$1$ law, 
$\flimit_0$ is constant $\Prob$-a.s. 
On the other hand, from (\ref{mean}) and \lref{P-as}
we see that $\E[\flimit_0] = \lim_{k \to \infty} \E[\ftk(\eta_0)] =
 0$, and then $\flimit_0 = 0$ $\Prob$-a.s. 
This together with (\ref{:63b}) for $\nnn=0$ 
yields $\flimit_{\mm} = - \mm$.  
Since $\flimit_{\mm} = - \mm$  is independent of the choice of 
a subsequence, \lref{weakly} follows 
immediately from Lemma~\ref{P-as} 
\qed\end{proof}

\begin{lem}\label{lem:weakimpliessingular}
Let $\mu$ and $\nu$ be probability measures on $X$. 
Suppose that the weak limits of $\{f_n\}_{n \ge 1}$
both in $L^2(X, \mu)$ and $L^2(X, \nu)$ exist and 
that they are constants $a$ and $b$, respectively. 
If $a \ne b$, then $\mu$ and $\nu$ are singular each other. 
\end{lem}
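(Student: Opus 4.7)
The plan is to construct a single sequence of measurable functions $h_J$ built from convex combinations of $\{f_n\}$ such that $h_J \to a$ strongly in $L^2(X,\mu)$ and $h_J \to b$ strongly in $L^2(X,\nu)$. Once this is achieved, strong $L^2$-convergence gives convergence in measure, hence an a.s.\ convergent subsequence in each of the two measures, and the level set $A := \{x \in X : \lim_{\ell \to \infty} h_{J_\ell}(x) = a\}$ will satisfy $\mu(A) = 1$ and, because $a \ne b$, $\nu(A) = 0$, witnessing the singularity of $\mu$ and $\nu$.

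First I would apply the Banach--Saks theorem in the Hilbert space $L^2(X,\mu)$ to the sequence $\{f_n\}$ (which converges weakly to $a$) to extract a subsequence $\{f_{n_k}\}$ whose Ces\`aro means
\[
g_N \;=\; \frac{1}{N} \sum_{k=1}^N f_{n_k}
\]
converge to $a$ strongly in $L^2(X,\mu)$. Since weak convergence is preserved under convex combinations, $g_N \rightharpoonup b$ weakly in $L^2(X,\nu)$, so applying Banach--Saks once more, this time in $L^2(X,\nu)$, yields a further subsequence $\{g_{N_j}\}$ whose Ces\`aro means
\[
h_J \;=\; \frac{1}{J} \sum_{j=1}^J g_{N_j}
\]
converge strongly to $b$ in $L^2(X,\nu)$. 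The crucial observation is that $h_J$ is simultaneously a convex combination of elements of the $L^2(X,\mu)$-strongly convergent sequence $\{g_{N_j}\}$, so $h_J \to a$ strongly in $L^2(X,\mu)$ as well; convex combinations of a norm-convergent sequence converge to the same limit.

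To finish, pass to a subsequence $\{h_{J_\ell}\}$ with $h_{J_\ell} \to a$ $\mu$-a.s., then extract a further subsequence $\{h_{J_{\ell_m}}\}$ with $h_{J_{\ell_m}} \to b$ $\nu$-a.s. Defining $A$ as above with this final subsequence completes the argument. The main subtlety is that Banach--Saks is inherently a single-space statement, so both measures must be handled simultaneously; this is why a two-step Ces\`aro construction is used, and it works because the second Ces\`aro averaging operation does not destroy the strong $L^2(X,\mu)$-limit established in the first step.
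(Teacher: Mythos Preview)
Your argument is correct. The double application of Banach--Saks does exactly what you claim: the first pass produces Ces\`aro means $g_N$ converging strongly to $a$ in $L^2(\mu)$; the subsequence $f_{n_k}$ still converges weakly to $b$ in $L^2(\nu)$, hence so do its Ces\`aro means $g_N$; the second pass then yields $h_J\to b$ strongly in $L^2(\nu)$, while the fact that $h_J$ is a Ces\`aro mean of the already norm-convergent subsequence $g_{N_j}$ preserves $h_J\to a$ in $L^2(\mu)$. The final diagonal extraction of a.s.\ convergent subsequences and the separating set $A=\{\lim h_{J_{\ell_m}}=a\}$ are routine.

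The paper takes a different route. It writes the Lebesgue decomposition $\mu=h\,\nu+\eta$ with $\eta\perp\nu$, and on the sets $A_\alpha=E^c\cap\{h\le\alpha\}$ (where $E$ carries $\eta$) observes that for any bounded $\phi$ one has $\phi\,\mathbf 1_{A_\alpha}\,h\in L^2(\nu)$; testing the weak convergence in $L^2(\nu)$ against this function and rewriting the integral as one against $\mu$ gives $\int_{A_\alpha}a\,\phi\,d\mu=\int_{A_\alpha}b\,\phi\,d\mu$, forcing $\mu(A_\alpha)=0$, and one lets $\alpha\to\infty$. This avoids Banach--Saks entirely and works purely at the level of weak limits, at the cost of invoking the Lebesgue decomposition. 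Your approach, by contrast, upgrades to pointwise information and manufactures the separating set directly out of the $f_n$; it is somewhat more constructive and would extend to any pair of spaces with the Banach--Saks property, whereas the paper's argument is tied to the Radon--Nikodym machinery but is arguably more elementary.
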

\begin{proof}
We write the Lebesgue decomposition as 
$\mu = h \nu + \eta$, where $h \in \Linu $ is nonnegative 
and $\eta$ is singular with respect to $\nu$. 
There exists a measurable set $E \subset X$ such that $\nu(E) = 0$ 
and $\eta(A) = \eta(A \cap E)$ for any $A$. 
Let $A_{\xpa} = E^c \cap \{h \le \alpha\}$. We note that 
$\eta(A_{\alpha}) = 0$ for all $\alpha>0$. 
Then, 
for every bounded measurable function $\phi$ on $X$, 
we see that 
\begin{align*}
\int_{A_{\alpha}} a \phi d\mu 
&= \lim_{n \to \infty} \int_{A_{\alpha}}  f_n \phi d\mu 
= \lim_{n \to \infty} \int_{A_{\alpha}}  f_n \phi (h d\nu + d\eta) \\
&= \int_{A_{\alpha}}  b \phi (h d\nu + d\eta)
= \int_{A_{\alpha}}  b \phi d\mu. 
\end{align*}
Hence, if $a \ne b$, then $\mu(A_{\alpha}) = 0$. 
By letting $\alpha \to \infty$, 
we have that $\mu(E^c \cap \{h < \infty\}) = 0$, or equivalently, 
\[
\mu(E \cup \{h = \infty\}) = 1. 
\]
On the other hand, since $\nu(\{h = \infty\}) = 0$, we see that 
\[
\nu(E \cup \{h = \infty\}) = 0. 
\]
Therefore, $\mu$ and $\nu$ are singular each other whenever $a \not= b$. 
\qed\end{proof}

Here we only show singularity part of \tref{main1}. 
We will prove absolute continuity part later in the proof of \tref{l:21}. 

\begin{proof}[of \tref{main1} (singularity) ]
It immediately follows from \lref{weakly} and
 \lref{lem:weakimpliessingular} that $H_{\mm}$ and $H_{\nnn}$ are 
singular each other whenever $\mm \not= \nnn$. 
Hence $\G_{\oo_{\mm}}$ and $\G_{\oo_{\nnn}}$ are mutually singular, from
 which the singularity of general Palm measures follows as mentioned 
in the beginning of this section. 
\qed\end{proof}

\subsection{Small variance property} 
In this subsection, we show (\ref{bdd-var}). 
We recall the following result obtained as Lemma 12 and Lemma 14 in
\cite{RV2}. Here we restate them in our situation. 
\begin{lem}\label{l:66}
Let $g : \C \to \C$ be of $C^1$ class and of compact support. 
Then, 
\begin{equation}
\var \big( \bra \xis, g(\frac{\cdot}{\rho}) \ket \big) 
\to \frac{1}{4\pi} \int_{\C} |\nabla g(z)|^2 dz
\label{:66x}
\end{equation}
as $\rho \to \infty$. For $g \in H^1(\C) \cap L^1(\C)$, 
\begin{equation}
\sup_{\rho > 1} \var \big( \bra \xis, g(\frac{\cdot}{\rho}) \ket \big) 
\le c \int_{\C} |\nabla g(z)|^2 dz
\label{:66y}
\end{equation}
for some universal constant $c>0$. 
\end{lem}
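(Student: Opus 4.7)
My starting point is the reproducing-kernel variance formula (\ref{var}) in \lref{l:35}, combined with the Ginibre identity $|\K(z,w)|^2 \gsf(z)\gsf(w) = \pi^{-2} e^{-|z-w|^2}$. Applied to $h = g(\cdot/\rho)$, the rescaling $(z,w) = \rho(z', w')$ followed by the substitution $u = \rho(z' - w')$ yields the working expression
\begin{equation*}
\var^{\G}\!\big(\bra \xis, g(\cdot/\rho)\ket\big) = \frac{\rho^2}{2\pi^2} \int_{\C^2} |g(w + u/\rho) - g(w)|^2 e^{-|u|^2}\, dw\, du.
\end{equation*}
Both (\ref{:66x}) and (\ref{:66y}) will follow by analyzing this integral.

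To establish the limit (\ref{:66x}), I would use the pointwise expansion $\rho(g(w + u/\rho) - g(w)) \to u_1\, \partial_x g(w) + u_2\, \partial_y g(w)$ as $\rho \to \infty$, where $u = u_1 + i u_2$ and $\partial_x, \partial_y$ denote the real partial derivatives. The compactness of $\mathrm{supp}(g)$ together with the mean-value bound $|g(w+u/\rho) - g(w)| \le \rho^{-1} |u|\, \|\nabla g\|_\infty$ provides a Gaussian-integrable dominating function of the form $C|u|^2 e^{-|u|^2} \mathbf{1}_{A}(w)$ on a compact set $A \subset \C$ enlarging $\mathrm{supp}(g)$. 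Dominated convergence then reduces the integral to $(2\pi^2)^{-1} \int_{\C^2} |u_1\, \partial_x g(w) + u_2\, \partial_y g(w)|^2 e^{-|u|^2}\, du\, dw$; the Gaussian $u$-integral (the cross term vanishes by symmetry, and $\int_{\C} u_j^2 e^{-|u|^2}\, du = \pi/2$) collapses this to $(4\pi)^{-1} \int_{\C} |\nabla g(w)|^2\, dw$.

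For the uniform bound (\ref{:66y}), I would use the Cauchy--Schwarz-type inequality $|g(w+u/\rho) - g(w)|^2 \le (|u|^2/\rho^2) \int_0^1 |\nabla g(w + tu/\rho)|^2 dt$, valid for $g \in H^1(\C)$ by approximation by $C^\infty_c$ functions. Substituting into the working expression, interchanging the $t$, $u$, and $w$ integrals, and applying the translation invariance of $w \mapsto \int_{\C} |\nabla g(w)|^2 dw$ yields
\begin{equation*}
\var^{\G}\!\big(\bra \xis, g(\cdot/\rho)\ket\big) \le \frac{1}{2\pi^2} \Big(\int_{\C} |u|^2 e^{-|u|^2}\, du\Big) \int_{\C} |\nabla g|^2\, dw = \frac{1}{2\pi} \int_{\C} |\nabla g|^2\, dw,
\end{equation*}
uniformly in $\rho > 0$, so $c = 1/(2\pi)$ works. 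The only delicate point is securing the Gaussian-integrable majorant in the proof of (\ref{:66x}); the compact support hypothesis is essential there, which explains why (\ref{:66x}) is stated only for $g \in C_c^1(\C)$ while the uniform estimate extends to $H^1 \cap L^1$ (the $L^1$ assumption ensuring that $\bra \xis, g(\cdot/\rho)\ket$ is itself well-defined).
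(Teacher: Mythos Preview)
The paper does not prove this lemma; it simply recalls it as Lemmas~12 and~14 of Rider--Vir\'ag~\cite{RV2}. Your direct argument via the variance formula \eqref{var} and the Ginibre identity $|\K(z,w)|^2\gsf(z)\gsf(w)=\pi^{-2}e^{-|z-w|^2}$ is correct and self-contained, and it even yields the explicit constant $c=1/(2\pi)$ in \eqref{:66y}.

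One small imprecision in the dominated-convergence step for \eqref{:66x}: the majorant cannot take the form $C|u|^2e^{-|u|^2}\ind_A(w)$ with a \emph{fixed} compact set $A$, because for large $|u|$ the integrand can be nonzero with $w$ arbitrarily far from $\mathrm{supp}(g)$ (namely when $w+u/\rho\in\mathrm{supp}(g)$ but $w\notin\mathrm{supp}(g)$). The fix is immediate: for $\rho\ge 1$ and $\mathrm{supp}(g)\subset D_R$, the integrand vanishes unless $|w|\le R+|u|$, so
\[
F(w,u)=\|\nabla g\|_\infty^2\,|u|^2 e^{-|u|^2}\,\ind_{\{|w|\le R+|u|\}}
\]
is a valid majorant, and $\int_{\C^2}F(w,u)\,dw\,du=\pi\|\nabla g\|_\infty^2\int_{\C}|u|^2(R+|u|)^2e^{-|u|^2}\,du<\infty$.
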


We remark that, if $g$ is rotationally invariant, i.e., 
$g(z) = h(|z|)$ for some $h : [0,\infty) \to \C$, \eqref{:66x} reads as 
\[
\var \big( \bra \eta_0, h(\frac{\cdot}{\rho})\ket \big) \to \frac{1}{2} \int_0^{\infty}
 x |h'(x)|^2 dx. 
\]
\begin{proof}[of (\ref{bdd-var}) ]
From \eqref{:13a} and \eqref{:FT}, 
if we take $g(z) = \max(1 - |z|^2, 0)$, i.e., $h(x) = \max(1-x^2, 0)$, 
and setting $g_T = g(\cdot/ \sqrt{T})$ and 
$h_T = h(\cdot/ \sqrt{T})$, we have 
\begin{align*}
 F_T(\xis) 
&= \bra \xis, g_T \ket 
- \E[\bra \xis, g_T\ket], \\ 
f_T(\eta_0) 
&= \bra \eta_0, h_T \ket
- \E[\bra \eta_0, h_T\ket]. 
\end{align*}
Hence, by \eqref{:66y}, 
\[
 \var(f_T(\eta_0)) = O(1). 
\]
From (\ref{FT2}) we observe that 
\[
 \var(\ft(\eta_{0}) - \ft(\eta_{\nnn})) 
= \sum_{i=1}^{\nnn} \var\left( \max(1 - \frac{Y_i}{T}, 0) \right)
\le \nnn \ (\nnn \in \n). 
\]
Therefore, we conclude that 
$\var[f_T(\eta_{\nnn})] = O(1)$ as $T \to \infty$ for every $\nnn \in \n$. 
\qed\end{proof}

\begin{rem}\label{r:71x}
Our test function $g(z) = \max(1- |z|^2, 0)$ is not $C^1$ but $H^1 \cap
 L^1$. Although \eqref{:66x} in \lref{l:66} cannot be applied directly, one can show that
 $\var(F_T) \to 1/2$ and it coincides with \eqref{:66x}. 
\end{rem}


Now we are in a position to prove \tref{l:13}.  
\begin{proof}[of \tref{l:13} ] 
We denote the Borel $\sigma$-field of $Q([0,\infty))$ by $\mathcal{R}$,
 and the law of $\eta_{\mm}$ 
on $(Q([0,\infty)), \mathcal{R})$ by $H_{\mm}$ as before. 
We recall that 
\[
 \fft = \ft \circ \Theta, \quad H_{\mm} = \G_{\oo_{\mm}} \circ
 \Theta^{-1}.  
\]
Then, $\var(\fft) (= \var(\ft))$ is uniformly bounded in $T$ 
under $\G_{\oo_{\mm}}$ from \eqref{bdd-var}, and thus 
a weak (subsequential) limit $\fflimit_{\mm}$ exists in
 $L^2(\G_{\oo_{\mm}})$.  
Moreover, since $\fft$ is $\Theta^{-1}(\mathcal{R})$-measurable, 
so is the weak limit $\fflimit_{\mm}$. Therefore, there exists 
an $\mathcal{R}$-measurable function $f \in L^2(Q([0,\infty), H_{\mm})$ 
such that $\fflimit_{\mm} = f \circ \Theta$. 
On the other hand, for every $\phi \in L^2(Q([0,\infty)), H_{\mm})$, 
\begin{align*}
\int f \phi dH_{\mm}
&= \int \fflimit_{\mm} \cdot (\phi \circ \Theta) d\G_{\oo_{\mm}} 
= \lim_{k \to \infty} 
\int \fftk \cdot (\phi \circ \Theta) d\G_{\oo_{\mm}}\\
&= \lim_{k \to \infty} \int \ftk \phi dH_{\mm}
= \int \flimit_{\mm} \phi dH_{\mm}, 
\end{align*}
which implies that $f$ must be $\flimit_{\mm}$. 
Consequently, from \lref{weakly}, 
$\fflimit_{\mm} = \flimit_{\mm} \circ \Theta = -\mm$,  
 which is the unique weak limit of $\{\fft\}_{T>0}$ in full sequence. 

For general $\x \in \C^{\mm}$, 
absolute continuity between $\G_{\oo_{\mm}}$ and $\G_{\x}$ yields 
the assertion for $\G_{\x}$ from \lref{sameweak} below. 
\qed\end{proof}

\begin{lem}\label{sameweak}
Let $\widehat{f}_{\mu}$ and $\widehat{f}_{\nu}$ be weak limits 
of $\{f_n\}$ in $L^2(X, \mu)$ and $L^2(X, \nu)$, respectively. 
If $\mu$ and $\nu$ are mutually absolutely continuous, 
then $\widehat{f}_{\mu} = \widehat{f}_{\nu}$ $\mu$-a.s. 
(or equivalently $\nu$-a.s.)
\end{lem}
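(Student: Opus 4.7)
The plan is to exploit the change-of-measure identity via $h := d\mu/d\nu$, which is a $\nu$-a.s.\ (hence $\mu$-a.s., by the assumed mutual absolute continuity) finite nonnegative measurable function. For an arbitrary bounded measurable test function $\psi : X \to \mathbb{R}$, the strategy is to write $\int f_n \psi\, d\mu = \int f_n (\psi h)\, d\nu$ and pass to the limit on each side using the two given weak convergences. The obstacle is that $h$ need not be bounded nor square-integrable under $\nu$, so $\psi h$ need not lie in $L^2(\nu)$ and is therefore not immediately admissible as a test vector against $\widehat{f}_\nu$.

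I would circumvent this by truncation. For $N \ge 1$ set $E_N = \{h \le N\}$ and $\psi_N = \psi\, \ind_{E_N}$. Because $\nu$ is a probability measure and $\psi_N h$ is uniformly bounded by $N\|\psi\|_\infty$, we have $\psi_N h \in L^2(\nu)$, so weak convergence of $f_n$ to $\widehat{f}_\nu$ in $L^2(\nu)$ gives
\[
\int f_n \psi_N\, d\mu = \int f_n (\psi_N h)\, d\nu \longrightarrow \int \widehat{f}_\nu (\psi_N h)\, d\nu = \int \widehat{f}_\nu \psi_N\, d\mu,
\]
where the final integral is finite since $\int_{E_N} |\widehat{f}_\nu|\, d\mu \le N \int |\widehat{f}_\nu|\, d\nu < \infty$. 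On the other hand, $\psi_N$ is itself bounded, hence lies in $L^2(\mu)$, so weak convergence of $f_n$ to $\widehat{f}_\mu$ in $L^2(\mu)$ yields $\int f_n \psi_N\, d\mu \to \int \widehat{f}_\mu \psi_N\, d\mu$. Equating the two limits produces
\[
\int_{E_N} \widehat{f}_\mu \psi\, d\mu = \int_{E_N} \widehat{f}_\nu \psi\, d\mu
\qquad \text{for every } N \ge 1 \text{ and every bounded measurable } \psi.
\]

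Specializing to indicators $\psi = \ind_A$ shows that the finite signed measures $\widehat{f}_\mu\, d\mu$ and $\widehat{f}_\nu\, d\mu$ agree on the trace $\sigma$-algebra of $E_N$, for every $N$. Since $h < \infty$ $\nu$-a.s.\ and $\mu \sim \nu$, the sets $E_N$ increase to $X$ modulo a $\mu$-null set, so $\widehat{f}_\mu = \widehat{f}_\nu$ holds $\mu$-a.e.\ on each $E_N$ and hence $\mu$-a.e.\ on $X$. The main difficulty, as highlighted, is the integrability gap between $\widehat{f}_\nu$ viewed under $\nu$ and under $\mu$; the entire nontrivial content of the argument is in neutralizing this gap via the truncation sets $\{E_N\}$, and no further input about the $L^2$ topology or the functions $f_n$ themselves is required.
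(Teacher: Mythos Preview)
Your argument is correct and is essentially the same as the paper's: both truncate the Radon--Nikodym density (you use $E_N=\{d\mu/d\nu\le N\}$, the paper uses $A_k=\{d\nu/d\mu\le k\}$), pass to the limit via the two weak convergences after the change of measure, and then exhaust. The only cosmetic differences are the direction of the density and the class of test functions (bounded versus $L^2$), which are immaterial by symmetry of the hypothesis.
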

\begin{proof}
Let $A_k = \{x \in X ; \frac{d\nu}{d\mu} \le k\}$ for $k \in \n$.  
For $\psi \in L^2(X, \nu)$ we set 
$\phi := \psi I_{A_k} \in L^2(X, \nu)$ 
for $k \in \n$. 
Since $\phi \frac{d\nu}{d\mu} \in L^2(X, \mu)$, we deduce that 
\begin{align*}
\int f_n \phi d\nu 
&= \int f_n \phi \frac{d\nu}{d\mu} d\mu
\to \int \widehat{f}_{\mu} \phi \frac{d\nu}{d\mu}d\mu
= \int \widehat{f}_{\mu} \phi d\nu. 
\end{align*}
This implies that $\widehat{f}_{\nu} = \widehat{f}_{\mu}$ $\nu$-a.e. on $A_k$ 
for any $k \in \n$. Since $\nu(\cup_{k \in \n} A_k) = 1$ when $\mu$ and
 $\nu$ mutually absolutely continuous, 
we conclude that $\widehat{f}_{\nu} = \widehat{f}_{\mu}$ $\nu$-a.s. 
\qed\end{proof}

\section{Absolute continuity of point processes} \label{s:7}

In this section, we will show a sufficient condition for two 
point processes to be mutually absolutely continuous in general setting. 

Let $ R $ be a complete separable metric space 
with metric $ \dist (\cdot, \cdot)$ and $\radon$ a Radon measure on $R $. 
We assume that $R$ is unbounded. 
We fix a point $ o \in R $ regarded as the origin, and set 
\begin{align}\label{:70a}&
S_r = \{ x\in R \, ;\, \dist (o,x) < b_r \} 
.\end{align}
Here $ \{ b_r \} $ is an increasing sequence of positive numbers such that 
$ \lim_{r \to \infty} b_r = \infty $. 
We will later choose $ \{ b_r \} $ suitably 
according to the model. 

Let $\QQQall $ be the configuration space over $R $, 
i.e., $\QQQall $ is a nonnegative integer-valued Radon measures on $R $ 
equipped with the vague topology. An element $\xis \in \QQQall $ can be expressed
as $\xis = \sum_i \delta_{s_i} $, and by definition, 
\begin{align}\notag 
\QQQall = \{ \xis = \sum_i\delta_{s_i}\, ;\, \xis (K) < \infty 
 \quad \text{ for all compact set } K \} 
.\end{align}
Let $ \QQQ $ be the subset defined by 
\begin{align}\notag 
\QQQ = \{ \xis \in \QQQall \, ;\, 
\xis (S_r ) < \infty \quad \text{ for all } r \in \mathbb{N}
\} 
.\end{align}
We say an element $ \xis$ of $ \QQQ $ a locally finite configuration. 
We note that this notion depends on the choice of the metric $ \dist $ 
equipped with $ R $.

Let $Q_{\nn} := \{\xis \in Q ; \xis(R) =\nn\}$ be 
the set of $\nn$-point configurations
and $Q_{\text{fin}} = \sqcup_{\nn=0}^{\infty} Q_{\nn}$, where $Q_0$ is the
singleton of empty configuration $\emptyset$. 
Clearly, $ Q_{\text{fin}} \subset Q \subset \QQQall $.

For a function $f:Q_{\text{fin}} \to \C$, 
there exist a constant $f^0 \in \C$ and symmetric functions $f^{\nn} 
: R^{\nn} \to \C$ so that $f(\emptyset) = f^0$ and 
$f(\xis) = f^{\nn}(s_1,s_2,\dots,s_{\nn})$ for 
$\xis = \sum_{i=1}^{\nn} \delta_{s_i} \in Q_{\nn}, \nn=1,2,\dots$. 
We say that $f : Q_{\text{fin}} \to \C$ is continuous if so is $f^{\nn} :
R^{\nn} \to \C$ for every $\nn$. 
We note that this continuity does not imply that in the vague topology, and 
 is enough for our argument. 
We often omit the superscript $\nn$ and abuse the same notation 
$f(\x)$ for the function $f^{\nn}$ on $R^{\nn}$. 

For a Borel subset $ A $, 
let $\p( A )$ be the set of all Borel probability measures 
$ \mu $ on $\QQQall $ 
such that $ \mu ( A^c ) = 0 $. 
We naturally regard such a $ \mu $ as the probability measure on 
$ (A,\mathcal{B}(A))$. 

\begin{dfn}\label{d:51}
Fix a Radon measure $\radon$ on $R$. 
We say that $\mu \in \p(Q)$ has an $\radon$-approximating sequence 
if (i) there exists a sequence $\mu^{\nn} \in \p(Q_{\nn}), \nn \in \n$ 
such that $\{\mu^{\nn}\}$ converges weakly to $\mu$, and 
(ii) there exists a continuous function $f_{\mu} : Q_{\text{fin}} 
\to [0,\infty)$ such that 
\begin{align}\label{:71a}&
 \mu^{\nn}(d\x) = \ZZ ({\mu^{\nn}})^{-1} f_{\mu}(\x) \radon^{\otimes \nn}
 (d\x) \quad (\x \in R^{\nn})
,\end{align}
where 
\begin{align}\label{:71b}&
\ZZ ({\mu^{\nn}}) = \int_{R^{\nn}} f_{\mu}(\x) \radon^{\otimes \nn} (d\x) 
.\end{align}
The totality of such probability measures is denoted by $\p_{\radon}(Q)$. 
\end{dfn}

In \dref{d:51}, we implicitly assume that 
$0 < \ZZ(\mu^{\nn}) < \infty$. 

Throughout this section we fix two probability measures 
$\mu, \nu \in \p_{\radon}(Q)$ with $ \radon $-approximations 
$\{\mu^{\nn}\}$ and $\{\nu^{\nn}\}$, respectively. 
We define the function $ \hh : Q_{\text{fin}} \to [0,\infty]$ by 
\begin{align}\label{:71c} & 
\hh (\xis) =\hh _{\mu,\nu}(\xis) = \frac{f_{\mu}(\xis)}{f_{\nu}(\xis)}, 
\end{align}
where it is understood to be $\infty$ when $\hh (\xis)$ is not defined. 
Note that the domain of $ \hh $ is $ Q_{\text{fin}}$, not $ Q $. 
We remark that for $\xis \in Q_{\text{fin}}$ 
\begin{equation}\notag
 \frac{d\mu^{\nn}}{d\nu^{\nn}}(\xis) 
= \hh (\xis) \frac{\ZZ(\nu^{\nn})}{\ZZ(\mu^{\nn})}. 
\end{equation}
Our task is to extend the domain of $ \hh $ naturally to $ Q $ 
in such a way that the extended function $ \hhh $ 
is the Radon-Nikodym density $ d\mu / d\nu $. 

Let $ S_r $ be as in \eqref{:70a}. 
For $r>0$, we define $\pi_r, \pi_r^c : Q \to Q$ by 
\begin{align}\notag 
 \pi_r(\xis) = \sum_{s_i \in S_r } \delta_{s_i}, \quad 
 \pi_r^c(\xis) = \sum_{s_i \in S_r^c } \delta_{s_i} 
\quad \text{ for $\xis = \sum_i \delta_{s_i}\in Q$}
.\end{align}
For the function $\hh$ on $Q_{\text{fin}}$, we define 
$\hh_r : Q \to [0,\infty]$ and 
$\hh_r^c : Q_{\text{fin}} \to [0,\infty]$ by 
\begin{equation}
\hh_r(\xis) = \hh(\pi_r(\xis)), \quad  
\hh_r^{c}(\xis) = \hh(\pi_r^c(\xis)) 
\notag 
\end{equation}
for each $r>0$, respectively. 
Then for each $\xis \in Q_{\text{fin}}$ we see that 
\begin{align}\notag 
\hh _r(\xis) = \hh (\xis ) \quad \text{ for sufficiently large $r = r_{\xis}$}
.\end{align}
This relation is a key to define the extension $ \hhh $ from $\hh$, 
and is a crucial consistency in our argument. 

For a non-decreasing positive sequence $ \{a_k\}_{k\in \n}$ 
we define a subset of $Q$ by 
\begin{equation}
\Hcal_k = \{\xis \in Q \, ; a_k^{-1} \le \inf_{r \in \n}\hh _r(\xis) \le  
\sup_{r \in \n}\hh _r(\xis) \le a_k\}. 
\label{:71g}
\end{equation}
For 
$ \{\mu^{\nn}\}_{\nn}$ and $ \{\nu^{\nn}\}_{\nn}$ introduced 
in \thetag{A2} below, we set 
\begin{align}\label{:71h}&
 \mu^{\nn}_k := \mu^{\nn}(\cdot | \Hcal_k) 
= \frac{\mu^{\nn } (\cdot \cap \Hcal_k)}{\mu^{\nn } (\Hcal_k)} 
,\quad 
 \nu^{\nn}_k := \nu^{\nn}(\cdot | \Hcal_k) 
= \frac{\nu^{\nn } (\cdot \cap \Hcal_k)}{\nu^{\nn } (\Hcal_k)} 
.\end{align}
Taking \thetag{A3}  below into account, 
we can and do assume that 
$ \mu (\Hcal_k)$, $ \nu (\Hcal_k)$, 
$\mu^{\nn } (\Hcal_k)$, and $\nu^{\nn } (\Hcal_k)$ are 
all positive. Hence the measures in \eqref{:71h} 
are well defined for all $ k, \nn $. 

We now state the assumptions by using the notion of 
$ \radon $-approximating sequences introduced in \dref{d:51}. 
Hence let 
$  \p_{\radon} $ be as in \dref{d:51}, 
$ \hh =\hh _{\mu ,\nu }$ be as in \eqref{:71c},  and 
$ \Hcal_k $ be as above. 
For a map $ f $ on $ Q$, we denote by $ \mathrm{Dcp}(f)$ 
the discontinuity points of $ f $. 
We assume the following: 
\\[2mm]
\thetag{A1} 
$ \mathrm{Dcp}(\pi _r)$ and $ \mathrm{Dcp}(\hh _r) $
are $ \mu $ and $ \nu $ measure zero for each $ r \in \mathbb{N}$. 
\\[2mm]
\thetag{A2} 
$ \mu $  and $ \nu \in \p_{\radon}$ with 
$ \radon $-approximating sequences 
$ \{\mu^{\nn}\}_{\nn}$ and $ \{\nu^{\nn}\}_{\nn}$. 
\\[2mm]
\thetag{A3}
$\displaystyle \lim_{k \to \infty} \liminf_{\nn \to \infty} 
\mu^{\nn}(\Hcal_k) =
 \lim_{k \to \infty} \liminf_{\nn \to \infty} 
\nu^{\nn}(\Hcal_k) = 1. $ 
\\[2mm]
\thetag{A4} For each $ r \in \mathbb{N}$, $\hh (\xis) =\hh _r(\xis)\hh _r^c(\xis)$ 
for $\xis \in Q_{\text{fin}}$. 
\\[2mm] 
\thetag{A5}
For each $k \in \n$, 
\[
 \sup_{\nn \in \n} \int_Q |\hh _r ^c -1 | d\nu^{\nn}_k 
= o(1) \quad (r \to \infty). 
\] 

Before proceeding to the proof, we briefly sketch the structure of our proof. 
From \thetag{A2} and \thetag{A3} we deduce that 
$\{(\mu_k^{\nn}, \nu_k^{\nn}, \nu^{\nn}(\Hcal_k))\}_{\nn}$ are relatively
compact. Let 
$\{(\mu_k^{\nn'}, \nu_k^{\nn'}, \nu^{\nn'}(\Hcal_k))\}_{\nn'}$ 
be convergent subsequence with intermediate limits 
$\{(\mu_k, \nu_k, \alpha_k^{\nu})\}$ such that 
\[
\{(\mu_k^{\nn'}, \nu_k^{\nn'}, \nu^{\nn'}(\Hcal_k))\} 
\to 
\{(\mu_k, \nu_k, \alpha_k^{\nu})\}
\to (\mu, \nu, 1). 
\]
We note that the supports of $\{\nu_k^{\nn'}\}$ are singular each other
in $\nn'$ for each $k$. Hence we introduce probability measures 
$\mu_k^{\nn'} \circ \pi_r^{-1}$ and $\nu_k^{\nn'} \circ \pi_r^{-1}$ and prove 
the convergence of Radon-Nikodym density 
$d\mu_k^{\nn'} \circ \pi_r^{-1} / d\nu_k^{\nn'} \circ \pi_r^{-1}$.  
Our proof consists of the following steps: 
\[
\frac{d\mu_k^{\nn'} \circ \pi_r^{-1}}{d\nu_k^{\nn'} \circ \pi_r^{-1}} 
\xrightarrow[\nn' \to \infty]{(a)}
\frac{d\mu_k \circ \pi_r^{-1}}{d\nu_k \circ \pi_r^{-1}} 
\xrightarrow[r \to \infty]{(b)}
\frac{d\mu_k}{d\nu_k}
\xrightarrow[k \to \infty]{(c)}
\frac{d\mu}{d\nu}.  
\]

\begin{rem}\label{r:81x}
\thetag{1} 
 Assumption \thetag{A4} implicitly assumes that 
the Radon-Nikodym densities $ d\mu^{\nn} / d\nu^{\nn} $ 
come from the sum of one-body potentials. 
In our application, 
we set $\mu^{\nn}$ and $\nu^{\nn}$ as 
two Palm measures $\mu^{\nn}_{\x}$ and $\mu^{\nn}_{\y}$ of 
\[
\mu^{\nn}(d\s)
= \ZZ(\mu^{\nn})^{-1} 
e^{-\sum_{1 \le i < j \le \nn} \Psi(|s_i - s_j|)} 
\radon^{\otimes \nn}(d\s)
\]
for $\s \in R^{\nn}$ and $\x, \y \in R^{\mm}$. 
Then the Radon-Nikodym derivative of 
two Palm measures $\mu^{\nn + \mm}_{\x}$ and $\mu^{\nn + \mm}_{\y}$, 
because of cancellation of mutual two-body potential part between $\xis$, 
is given by 
\begin{equation}
\frac{d\mu_{\x}^{\nn + \mm}}{d\mu_{\y}^{\nn + \mm}}(\s)
\propto 
e^{-\sum_{j=1}^{\nn} \widetilde{\Psi}(s_j)}, 
\notag 
\end{equation}
where 
$\widetilde{\Psi}(s) = \sum_{i=1}^{\mm} \{\Psi(|x_i - s|) - \Psi(|y_i -
 s|)\}$.   
In the case of the Ginibre point process $\G^{\nn}$, 
the two-body potential $\Psi$ is a logarithmic potential, i.e.,  
$\Psi(t) = -2\log t$. 

Under this setting, we also see that 
\[
 \frac{d\mu_{\x,k}^{\nn+\mm} \circ (\pi_r^c)^{-1}}{d\mu_{\y,k}^{\nn+\mm} \circ
 (\pi_r^c)^{-1}}(\xis)  
\approx \hh_r^c(\xis) \propto 
e^{-\sum_{s_j \in S_r^c} 
\widetilde{\Psi}(s_j)}.  
\]
We note that \thetag{A5} controls 
the tail part of the above Radon-Nikodym densities 
uniformly in $\nn$. 
\\
\thetag{2} 
We use \thetag{A1}--\thetag{A3} for 
$ \xrightarrow[\nn' \to \infty]{(a)} $. 
We use \thetag{A5} to justify the procedure 
$ \xrightarrow[r \to \infty]{(b)} $. 
In fact, \thetag{A5} is used in \eqref{:77d} of \lref{l:77}.  
We also remark that the step 
$\xrightarrow[k \to \infty]{(c)}$ follows from \thetag{A3} 
and \thetag{A5}. 

\end{rem}

The following four lemmas also hold for $\mu^{\nn}_{k}$ 
under the assumptions \thetag{A1}--\thetag{A3} because 
these assumptions are symmetric in $ (\mu ^{\nn }, \mu)$ and $ (\nu ^{\nn }, \nu )$. 

\begin{lem} \label{l:72} 
$ \{\nu _k ^{\nn }\}_{\nn \in \n}$ is tight in $ \nn $ for all $ k \in\n $. 
\end{lem}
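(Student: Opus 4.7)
The plan is to deduce tightness of $\{\nu^{\nn}_k\}_{\nn}$ from the tightness of the unconditioned family $\{\nu^{\nn}\}_{\nn}$ via the elementary identity
\[
\nu^{\nn}_k(A) = \frac{\nu^{\nn}(A \cap \Hcal_k)}{\nu^{\nn}(\Hcal_k)},
\]
which yields the trivial estimate $\nu^{\nn}_k(K^c) \le \nu^{\nn}(K^c)/\nu^{\nn}(\Hcal_k)$ for every compact $K \subset Q$. Thus the proof reduces to two ingredients: (i) tightness of $\{\nu^{\nn}\}_{\nn}$ itself, and (ii) a uniform positive lower bound $c_k := \inf_{\nn} \nu^{\nn}(\Hcal_k) > 0$ for each fixed $k$.

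Ingredient (i) is immediate from \thetag{A2}: the weak convergence $\nu^{\nn} \to \nu$ on the Polish space $Q$ implies tightness by Prohorov's theorem. For (ii), the key observation is that the sets $\Hcal_k$ are non-decreasing in $k$, since the sequence $\{a_k\}$ defining them in \eqref{:71g} is non-decreasing. Consequently $\liminf_{\nn \to \infty} \nu^{\nn}(\Hcal_k)$ is non-decreasing in $k$ with limit equal to $1$ by \thetag{A3}, so $\liminf_{\nn} \nu^{\nn}(\Hcal_k) > 0$ for every $k$ large enough. Combining this with the standing assumption $\nu^{\nn}(\Hcal_k) > 0$ for each individual $\nn$ gives $c_k > 0$, since only finitely many $\nn$ can sit strictly below the liminf and those contribute a positive minimum.

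Once (i) and (ii) are in place, the conclusion is immediate: given $\epsilon > 0$, tightness of $\{\nu^{\nn}\}$ produces a compact set $K \subset Q$ with $\sup_{\nn} \nu^{\nn}(K^c) \le c_k \epsilon$, whence $\sup_{\nn} \nu^{\nn}_k(K^c) \le \epsilon$. The main delicacy is step (ii) --- promoting the pointwise positivity of $\nu^{\nn}(\Hcal_k)$ together with the liminf-style statement \thetag{A3} into a genuinely uniform-in-$\nn$ lower bound; the monotonicity of $\Hcal_k$ in $k$ is what makes this passage work. The remaining hypotheses \thetag{A1}, \thetag{A4}, \thetag{A5} are not used in this particular lemma; they will enter the subsequent lemmas that identify the weak limit of $\{\nu^{\nn}_k\}_{\nn}$ and connect it to the Radon-Nikodym density $d\mu/d\nu$.
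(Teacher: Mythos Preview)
Your proof is correct and follows essentially the same route as the paper's own argument: both deduce tightness of $\{\nu^{\nn}\}_{\nn}$ from \thetag{A2} via Prohorov, use the trivial bound $\nu^{\nn}_k(K^c)\le \nu^{\nn}(K^c)/\nu^{\nn}(\Hcal_k)$, and then invoke \thetag{A3} to control the denominator. The only cosmetic difference is that the paper bounds $\limsup_{\nn\to\infty}\nu^{\nn}_k(K_\epsilon^c)$ directly by $\epsilon/\liminf_{\nn}\nu^{\nn}(\Hcal_k)$ and leaves the finitely many remaining $\nn$ implicit, whereas you pass through the uniform infimum $c_k=\inf_{\nn}\nu^{\nn}(\Hcal_k)$ and explain explicitly why it is positive.

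One minor point worth noting (present in the paper as well): \thetag{A3} only guarantees $\liminf_{\nn}\nu^{\nn}(\Hcal_k)>0$ for all sufficiently large $k$, not literally for every $k\in\n$. You flag this by writing ``for every $k$ large enough'', which is honest, but the lemma is stated for all $k$. This is harmless in practice---the sequence $\{a_k\}$ can be reindexed to start where the liminf is positive without affecting the rest of the argument, and only large $k$ matter downstream---but it is worth being aware that the ``for all $k$'' in the statement should be read modulo this convention, which the paper records just before listing \thetag{A1}--\thetag{A5}.
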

\begin{proof}
Since $\nu^{\nn} \to \nu$ weakly from \thetag{A2}, $\{\nu^{\nn}\}_{\nn \in \n}$ is tight, 
i.e., for any $\epsilon >0$ 
there is a compact set $K_{\epsilon}$ such that 
$\sup_{\nn \in \n} \nu^{\nn}(K_{\epsilon}^c) \le \epsilon$. 
From this combined with \eqref{:71h}, we have 
\begin{align}\notag 
\limsup_{\nn \to \infty} \nu_{k}^{\nn}(K_{\epsilon}^c) 
\le  &\frac{\epsilon}{\liminf_{\nn \to \infty} \nu^{\nn}(\Hcal_k)} 
. \end{align}
Therefore, $\{\nu^{\nn}_{k}\}_{\nn \in \n}$ is tight from \thetag{A3}. 
\qed\end{proof}

For two measures $ m_1 $ and $ m_2$ 
on a measurable space $ (\Omega , \mathcal{F})$, 
$ m_1 \le m_2 $ means $ m_1 (A) \le m_2 (A)$ for all $ A \in \mathcal{F}$. 
\begin{lem}\label{l:73} 
There exists an increasing sequence $\{\nn_p\}_{p\in\n}$ 
of natural numbers suct that, for all $ k \in \n $, 
the following limits exist: 
\begin{align}\label{:73b} 
\nu_k := &\limi{p}\nu^{\nn_p}_k  \quad \text{ weakly} 
,\\ \label{:73w} 
\alpha^{\nu}_k := &\lim_{p\to\infty} \nu^{\nn_p}(\Hcal_k) 
.\end{align}
The limiting probability measures 
$\{\nu_k\}_{k\in\n}$ and constants $ \{\alpha^{\nu}_k\}_{k\in\n}$ satisfy 
\begin{align}
\label{:73a}&
0 \le \alpha^{\nu}_k \le \alpha^{\nu}_l \le 1 \quad \text{ for all } k\le l,\quad 
\limi{k} \alpha^{\nu}_k = 1
,\\&\label{:73c}
 \alpha_k^{\nu} \nu_k \le \alpha_l^{\nu} \nu_l \le \nu \quad 
\text{ for all $k \le l$}
.\end{align}
Furthermore, 
$\nu_k \to \nu$ strongly as $ k\to\infty $ in the following sense: 
\begin{align}\label{:73z}&
\limi{k} \nu_k (A) = \nu (A) \quad \text{ for all } 
A \in \mathcal{B}(\mathsf{S}) 
.\end{align} 
\end{lem}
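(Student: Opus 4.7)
The plan is a standard Prokhorov-diagonal argument followed by careful bookkeeping of the monotonicity of the sets $\Hcal_k$. First, for each fixed $k \in \n$, \lref{l:72} gives tightness of $\{\nu_k^{\nn}\}_{\nn \in \n}$, and the scalars $\nu^{\nn}(\Hcal_k) \in [0,1]$ are bounded. Applying Prokhorov's theorem in each coordinate $k$ and then extracting a Cantor diagonal subsequence, I obtain a single increasing sequence $\{\nn_p\}$ along which $\nu_k^{\nn_p}$ converges weakly to some probability measure $\nu_k$ and $\nu^{\nn_p}(\Hcal_k)$ converges to some $\alpha_k^{\nu} \in [0,1]$ for every $k$. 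This produces the two limits asserted in the statement.

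Next, because $\{a_k\}$ is non-decreasing, the defining conditions $a_k^{-1} \le \inf_r \hh_r \le \sup_r \hh_r \le a_k$ yield $\Hcal_k \subset \Hcal_l$ whenever $k \le l$, and hence $\nu^{\nn}(\Hcal_k) \le \nu^{\nn}(\Hcal_l) \le 1$ for every $\nn$. Taking $p \to \infty$ along $\{\nn_p\}$ proves \eqref{:73a}, where the fact that $\alpha_k^{\nu} \to 1$ uses assumption \thetag{A3} applied to $\nu$. For \eqref{:73c}, I use the identity $\nu^{\nn}(\Hcal_k)\, \nu_k^{\nn}(\cdot) = \nu^{\nn}(\cdot \cap \Hcal_k)$ together with $\nu^{\nn_p}(\Hcal_k) \to \alpha_k^{\nu}$ to conclude that the sub-probability measures $\nu^{\nn_p}(\cdot \cap \Hcal_k)$ converge weakly to $\alpha_k^{\nu}\nu_k$. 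From the pointwise inequalities $\nu^{\nn_p}(\cdot \cap \Hcal_k) \le \nu^{\nn_p}(\cdot \cap \Hcal_l) \le \nu^{\nn_p}$, integrating any $f \in C_b(Q)$ with $f \ge 0$ and passing to the limit yields $\int f\, d(\alpha_k^{\nu}\nu_k) \le \int f\, d(\alpha_l^{\nu}\nu_l) \le \int f\, d\nu$, which by Portmanteau is equivalent to the measure inequality $\alpha_k^{\nu}\nu_k \le \alpha_l^{\nu}\nu_l \le \nu$.

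For the strong convergence \eqref{:73z}, I set $m(A) := \sup_k \alpha_k^{\nu}\nu_k(A)$; since the family $\{\alpha_k^{\nu}\nu_k\}$ of sub-probability measures is monotone in $k$ as a set function and uniformly dominated by $\nu$, the limit $m$ is a Borel measure satisfying $m \le \nu$. Its total mass equals $m(Q) = \sup_k \alpha_k^{\nu} = 1 = \nu(Q)$, and two finite Borel measures with one dominated by the other and having equal total mass must coincide. Hence $\alpha_k^{\nu}\nu_k(A) \uparrow \nu(A)$ for every $A \in \mathcal{B}(Q)$, and dividing by $\alpha_k^{\nu}\to 1$ gives $\lim_k \nu_k(A) = \nu(A)$.

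The main delicate step is the passage from the set-level inequality on $\Hcal_k$ to the measure inequality \eqref{:73c} on every Borel set, since $\Hcal_k$ itself need not be a continuity set of $\nu^{\nn_p}$ for the vague/weak topology on $Q$. This is bypassed by working only with $C_b(Q)$-test functions above, which together with the monotone-limit argument upgrades automatically to every Borel set in the final step. Once \eqref{:73c} is established as a measure inequality, the rest of the proof is essentially bookkeeping using \thetag{A3} and elementary facts about finite measures.
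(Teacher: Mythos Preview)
Your proof is correct and follows essentially the same approach as the paper: a Prokhorov--diagonal extraction, then monotonicity of the $\Hcal_k$'s to get \eqref{:73a} and \eqref{:73c}, then \eqref{:73z} from those. Two minor differences worth noting: (i) for \eqref{:73c} you are more explicit than the paper in passing the inequality $\nu^{\nn}(\cdot\cap\Hcal_k)\le\nu^{\nn}(\cdot\cap\Hcal_l)\le\nu^{\nn}$ through the weak limit via nonnegative $C_b$-test functions and regularity (the paper simply says ``take the limit along $\{\nn_p\}$''); (ii) for \eqref{:73z} the paper argues $\limsup_k\nu_k(A)\le\nu(A)$ and then applies the same with $A^c$, whereas you identify the monotone limit $m=\sup_k\alpha_k^{\nu}\nu_k$ as a measure with full mass and hence equal to $\nu$---these are equivalent one-line tricks. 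Your remark that ``Portmanteau'' gives the measure inequality is a slight misnomer (Portmanteau concerns equivalent formulations of weak convergence, not domination), but the argument you describe---approximating $\mathbf{1}_F$ for closed $F$ by continuous functions and invoking inner regularity---is exactly what is needed and is correct.
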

\begin{proof}
By tightness of $\{\nu_k^{\nn}\}_{\nn}$ combined with the diagonal argument, 
 one can take a common subsequence $\{\nn_p\}_p$ of $\n$ so that 
\eqref{:73b} and \eqref{:73w} hold for all $ k \in \n $. 

Recall that $ \Hcal_k \subset \Hcal_l$ for all $ k\le l $ by definition. 
From this we see 
$ 0\le \nu^{\nn}(\Hcal_k) \le \nu^{\nn}(\Hcal_l) \le 1$. 
From \eqref{:73w} we then obtain the inequalities in \eqref{:73a}. 
From this and \thetag{A3}, we deduce that $ \limi{k} \alpha^{\nu}_k = 1$. 
We thus obtain \eqref{:73a}. 

From $ \Hcal_k \subset \Hcal_l$ for all $ k\le l $ and 
\eqref{:71h}, we deduce that 
\begin{align}\label{:73d}&
\nu^{\nn}(\Hcal_k) \nu_{k}^{\nn} \le 
\nu^{\nn}(\Hcal_l) \nu_{l}^{\nn} \le \nu^{\nn}
.\end{align}
Recall that $ 0 < \nu^{\nn_p}(\Hcal_k) \le 1 $. 
Then \eqref{:73c} follows from \eqref{:73w}, \eqref{:73d}, and \thetag{A2} by
 taking the limit along $\{\nn_p\}_{p \in \n}$. 

We deduce from \eqref{:73a} and \eqref{:73c} that, for any Borel set $ A $,  
\begin{align}\label{:73e}& 
\limsup_{k \to \infty} \nu_k(A) \le \nu (A)
.\end{align}
Replacing $ A $ with 
$ A^c$ in \eqref{:73e}, we obtain 
\begin{align}\label{:73g}&
\liminf_{k \to \infty} \nu_k(A) = 
1 - \limsup_{k \to \infty} \nu_k(A^c) \ge 
1- \nu (A^c)= \nu (A)
.\end{align} 
From \eqref{:73e} and \eqref{:73g}, we conclude \eqref{:73z}. 
\qed\end{proof}
 
\begin{lem}\label{l:74}
For each $k \in \n$, $\nu_k(\Hcal_k) = 1$ and 
$\nu_k$ is absolutely continuous with respect to $\nu$ and $\nu_l$ for
 $l \ge k$. 
\end{lem}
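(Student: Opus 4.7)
The plan is to deduce both statements from Lemma~7.3, specifically (7.3c), together with assumption (A1), settling absolute continuity first and the support claim $\nu_k(\Hcal_k)=1$ second.

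For the absolute continuity, the inequalities $\alpha_k^{\nu}\nu_k \le \alpha_l^{\nu}\nu_l \le \nu$ from (7.3c) say, as Borel measures, that any $\nu$-null (respectively $\nu_l$-null) set has $\alpha_k^{\nu}\nu_k$-mass zero. Provided $\alpha_k^{\nu}>0$, this immediately gives $\nu_k \prec \nu$ and $\nu_k \prec \nu_l$ for $l \ge k$. Positivity of $\alpha_k^{\nu}$ holds for all $k$ sufficiently large by (7.3a), and we may assume without loss of generality, by re-indexing the increasing sequence $\{a_k\}$ from (7.1g), that $\alpha_k^{\nu}>0$ for every $k \in \n$.

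For $\nu_k(\Hcal_k)=1$, the obstacle is that $\Hcal_k$ need not be closed in $Q$, so the weak limit of measures supported on $\Hcal_k$ could a priori escape. I would replace $\Hcal_k$ by its natural closed enlargement $\widetilde{\Hcal}_k := \bigcap_{r \in \n} \overline{\{a_k^{-1} \le \hh_r \le a_k\}} \supset \Hcal_k$. Since $\nu_k^{\nn_p}(\widetilde{\Hcal}_k)=1$ and $\widetilde{\Hcal}_k$ is closed, the portmanteau theorem applied to the weak convergence $\nu_k^{\nn_p} \to \nu_k$ from (7.3b) gives $\nu_k(\widetilde{\Hcal}_k) \ge \limsup_p \nu_k^{\nn_p}(\widetilde{\Hcal}_k) = 1$. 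It then remains to show $\nu_k(\widetilde{\Hcal}_k \setminus \Hcal_k)=0$: any $\xis$ in this difference is a limit point of $\{a_k^{-1} \le \hh_r \le a_k\}$ for every $r$ yet violates the inequality for some $r$, so it lies in $\bigcup_r (\mathrm{Dcp}(\hh_r) \cup \{\hh_r = a_k^{-1}\} \cup \{\hh_r = a_k\})$. By (A1) each $\mathrm{Dcp}(\hh_r)$ is $\nu$-null, and the level sets $\{\hh_r = a_k^{\pm 1}\}$ can be arranged to be $\nu$-null by choosing $\{a_k\}$ to avoid the countable set of atoms of $\nu \circ \hh_r^{-1}$ (via a diagonal selection over $r$ and $k$). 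Combined with the $\nu_k \prec \nu$ just established, this yields $\nu_k(\widetilde{\Hcal}_k \setminus \Hcal_k)=0$, hence $\nu_k(\Hcal_k)=1$.

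The hard part is the last step, namely the diagonal selection of $\{a_k\}$ that simultaneously avoids the atoms of $\nu \circ \hh_r^{-1}$ for every $r \in \n$; apart from this minor combinatorial bookkeeping, the proof is a direct application of (7.3c), weak convergence via the portmanteau theorem, and assumption (A1).
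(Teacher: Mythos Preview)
Your proof is correct and follows essentially the same strategy as the paper: absolute continuity from the measure inequality \eqref{:73c}, then $\nu_k(\Hcal_k)=1$ via the finite-$r$ sets $\mathcal{H}_{k,r}=\{a_k^{-1}\le \hh_r\le a_k\}$, using (A1) together with a choice of $\{a_k\}$ avoiding the countably many atoms of $\nu\circ\hh_r^{-1}$ to make the boundaries $\nu$-null. The only cosmetic difference is that the paper applies weak convergence directly to $1_{\mathcal{H}_{k,r}}$ (after showing $\nu_k(\mathrm{Dcp}(1_{\mathcal{H}_{k,r}}))=0$) rather than passing to closures and invoking portmanteau; incidentally, in your formulation the level-set avoidance is actually superfluous, since any $\xis\in\overline{\mathcal{H}_{k,r_0}}\setminus\mathcal{H}_{k,r_0}$ strictly violates $a_k^{-1}\le\hh_{r_0}(\xis)\le a_k$ and is therefore forced into $\mathrm{Dcp}(\hh_{r_0})$ already.
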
 
\begin{proof}
Absolute continuity part follows from \eqref{:73c}. 
 Let $ \mathcal{H}_{k,r}=\{ a_k^{-1} \le \hh _r(\mathsf{s}) \le a_k \}  $. 
We easily see that 
\begin{align}\notag 
\mathrm{Dcp} (1_{\mathcal{H}_{k,r}}) \subset \{ \hh _r(\mathsf{s}) = 
a_k^{-1} \} \bigcup \{ \hh _r(\mathsf{s}) =  a_k \} 
\bigcup \mathrm{Dcp} (\pi_r )
.\end{align}
Note that the cardinality of the set 
$ \{ a ; \nu (\{ \hh _r (\xis) = a \}) > 0 \}$ is at most countable.  
Hence, retaking $ a_k$ if necessary, we can assume without loss of generality that 
$ \nu ( \hh _r(\mathsf{s}) =  a_k^{-1}) = 
  \nu ( \hh _r(\mathsf{s}) =  a_k ) = 0 $ 
for all $ k$. Then $ \nu (\mathrm{Dcp} (1_{\mathcal{H}_{k,r}}))=0 $. 
From this, \eqref{:73a} and \eqref{:73c}, we deduce that  
$   \nu_k (\mathrm{Dcp} (1_{\mathcal{H}_{k,r}}) ) = 0 $. 
From this and the weak convergence of $\nu _{k}^{\nn _p} $ to $\nu_k$ together with 
$ \nu _{k}^{\nn }(\mathcal{H}_{k,r}) = 1 $ for all $ \nn $, 
we deduce that for each $k$
\begin{align}\label{:76s}&
\text{$ \nu_k (\mathcal{H}_{k,r}) = 1 $ for all $ r \in \n $.}
\end{align} 
From \eqref{:76s} we deduce $ \nu_k(\mathcal{H}_k )=1 $ because 
$ \mathcal{H}_k = \cap_{r=1}^{\infty} \mathcal{H}_{k,r}$. 
\end{proof}
 
\begin{rem}\label{r:74}
\thetag{1} 
It follows immediately from \eqref{:73c} that 
$\alpha_k^{\nu} \nu_k \le \nu(\Hcal_k) \nu(\cdot | \Hcal_k)$. 
Since both $\nu_k$ and $\nu(\cdot | \Hcal_k)$ are probability measures, 
$\alpha_k^{\nu} := \lim_{p \to \infty} \nu^{\nn_p}(\Hcal_k) \le
 \nu(\Hcal_k)$. Furthermore, 
if $\alpha_k^{\nu} = \nu(\Hcal_k)$, then $\nu_k = \nu(\cdot |
 \Hcal_k)$. 
\\
\thetag{2} Let $ k $ be fixed. 
In \lref{l:72} and \lref{l:73}, we proved that $ \{\nu _k ^{\nn }\}_{\nn
 \in \n}$ is tight in $ \nn $ for each $ k $ and 
we took a convergent subsequence $ \{\nu^{\nn_p}_k \}_{p\in\n } $ with limit $ \nu_k$. 
We remark that the whole sequence $ \{\nu _k ^{\nn }\}_{\nn \in \n}$, however, 
does not converge in general. Indeed, by definition, 
$$ \nu _k ^{\nn } (d\mathsf{s})= \frac{1}{\nu ^{\nn }(\Hcal_k)} 
1_{\Hcal_k} (\mathsf{s})\nu ^{\nn }(d\mathsf{s})
.$$  
Since the function $ 1_{\Hcal_k} $ is not continuous, 
$ \{\nu _k ^{\nn }\}_{\nn \in \n}$ does not necessarily converge weakly 
in general. Furthermore, the limit point $ \nu _k $ 
may depend on the choice of the subsequence 
$ \{\nu^{\nn_p}_k \}_{p\in\n } $. This fact makes our proof complicated. 

We remark that, if $ \nu (\partial \Hcal_k) = 0 $, then the convergence of 
$ \{\nu _k ^{\nn }\}_{\nn \in \n} $ follows from that of 
$\{\nu ^{\nn }\}_{\nn \in \n}$ to $ \nu $. 
It would not be easy to prove $ \nu (\partial \Hcal_k) = 0 $ in general 
because $Q$ is equipped with the vague topology. 
Indeed, $\partial \Hcal_k$ may be large in general as is seen 
in \lref{l:744} below. In the case of the Ginibre point process, 
one can construct such a configuration $\xit \in Q$ as in \lref{l:744}. 

In the proof of \lref{l:74}, we introduced $ \sigma [\pi_r]$-measurable sets 
$ \mathcal{H}_{k,r} $ 
such that $ \cap_{r=1}^{\infty}\mathcal{H}_{k,r} = \mathcal{H}_k $ 
and that $ \nu (\partial \mathcal{H}_{k,r}) = 0 $, and 
applied the above mentioned strategy 
to $ \frac{1}{\nu^{\nn}(\mathcal{H}_{k,r})} 1_{\mathcal{H}_{k,r}}\nu ^{\nn} $. 

The point is that the limit $ \widetilde{\nu}:=\lim_{k\to\infty} \nu _k
 $ is independent 
of the choice of a subsequence $ \{\nu^{\nn_p}_k \}_{p\in\n } $ 
and coincides with $ \nu $. 
To prove this, we use the monotonicity argument in the sequel. 
The monotonicity comes from \eqref{:73a}, \eqref{:73c}, and
the definition of  $\{\Hcal_k\}$. 
\end{rem}

\begin{lem}\label{l:744}
Assume \thetag{A4}. 
If there exists a $ \xit \in Q $ 
such that $ 0 < \hhr (\xit) < \infty $ 
for all $ r \in \mathbb{N}$ and 
$\lim_{r\to\infty} \hhr(\xit) = \infty $, 
then $ \partial \Hcal_k = \Hcal_k$. 
\end{lem}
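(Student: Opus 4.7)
The plan is to establish $\partial \Hcal_k = \Hcal_k$ via the two inclusions, with the essential content being $\Hcal_k \subseteq \partial \Hcal_k$ (i.e.\ $\Hcal_k$ has empty interior in the vague topology), while the reverse inclusion amounts to showing that $\Hcal_k$ is closed.

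For the main direction, I would fix $\xis \in \Hcal_k$, pick a strictly increasing sequence $r_n \to \infty$ in $\mathbb{N}$, and set
\[
\xis_n := \pi_{r_n}(\xis) + \pi_{r_n}^c(\xit) \in Q.
\]
Vague convergence $\xis_n \to \xis$ is immediate: any $\phi \in C_c(R)$ has $\supp \phi \subseteq S_r$ for some $r$, and for all $n$ with $r_n > r$ one has $\langle \xis_n, \phi\rangle = \langle \pi_{r_n}(\xis),\phi\rangle = \langle \xis, \phi\rangle$. Moreover for $r \le r_n$ one has $\pi_r(\xis_n) = \pi_r(\xis)$, so no new information is gained at small scales.

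The next step, which is the combinatorial heart of the lemma, is to show $\xis_n \notin \Hcal_k$ by exploiting the assumed blow-up of $\hh_r(\xit)$. For $r > r_n$ the finite configuration $\pi_r(\xis_n) = \pi_{r_n}(\xis) + (\pi_r(\xit) - \pi_{r_n}(\xit))$ splits cleanly at radius $b_{r_n}$: its $S_{r_n}$-part is $\pi_{r_n}(\xis)$ and its complementary part is $\pi_r(\xit) - \pi_{r_n}(\xit)$. Applying \thetag{A4} at radius $r_n$ to $\pi_r(\xis_n)$, then separately to $\pi_r(\xit)$, and dividing (legitimate because $0 < \hh_{r_n}(\xit) < \infty$ by hypothesis), yields the key identity
\[
\hh_r(\xis_n) \;=\; \hh_{r_n}(\xis)\cdot\frac{\hh_r(\xit)}{\hh_{r_n}(\xit)}.
\]
Since $\hh_{r_n}(\xis) \ge a_k^{-1} > 0$ and $\hh_{r_n}(\xit) \in (0,\infty)$ are fixed positive reals while $\hh_r(\xit) \to \infty$, we obtain $\sup_{r \in \mathbb{N}} \hh_r(\xis_n) = \infty > a_k$, so $\xis_n \notin \Hcal_k$. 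Together with $\xis \in \overline{\Hcal_k}$ this places $\xis \in \partial \Hcal_k$.

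For the reverse inclusion, I would check closedness of $\Hcal_k$: given $\xis^{(n)} \in \Hcal_k$ with $\xis^{(n)} \to \xis$ vaguely, for each $r$ at which $\xis$ charges no point of $\partial S_r$ the projection $\pi_r$ is continuous at $\xis$, so continuity of $\hh$ on $Q_{\text{fin}}$ gives $\hh_r(\xis) \in [a_k^{-1}, a_k]$; since $\xis$ has at most countably many atoms, this bound holds for all but countably many $r$, and the remaining radii in $\mathbb{N}$ are covered by relating $\hh_r$ to $\hh_{r'}$ at nearby good radii $r'$ through \thetag{A4}. The principal obstacle is precisely this closedness step, where the interaction between the vague topology and the discrete family $\{\partial S_r\}_{r\in\mathbb{N}}$ must be handled carefully; in contrast, the explicit formula $\hh_r(\xis_n) = \hh_{r_n}(\xis)\hh_r(\xit)/\hh_{r_n}(\xit)$ above is the decisive computation and is what propagates the divergence of $\hh_r(\xit)$ to every point of $\Hcal_k$.
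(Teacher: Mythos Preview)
Your argument for the inclusion $\Hcal_k \subseteq \partial \Hcal_k$ is essentially the paper's proof. The paper uses the finite truncations $\xiu_{r,q} = \pi_r(\xis) + \pi_r^c(\pi_q(\xit)) \in Q_{\text{fin}}$ in place of your $\xis_n = \pi_{r_n}(\xis) + \pi_{r_n}^c(\xit)$, derives the same identity $\hh(\xiu_{r,q}) = \hh_r(\xis)\,\hh_q(\xit)/\hh_r(\xit)$ via \thetag{A4}, lets $q \to \infty$ to find $q(r)$ with $\xiu_{r,q(r)} \notin \Hcal_k$, and then sends $r \to \infty$ to obtain vague convergence to $\xis$. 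The only difference is cosmetic: the paper truncates $\xit$ at an outer radius so that the approximating configurations lie in $Q_{\text{fin}}$ and $\hh$ itself (rather than $\hh_r$) can be evaluated directly on them; your version works just as well since $\hh_r$ is defined on all of $Q$.

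For the reverse inclusion $\partial \Hcal_k \subseteq \Hcal_k$ (closedness of $\Hcal_k$), the paper's proof does not address this at all: it simply asserts $\partial \Hcal_k = \Hcal_k$ after establishing the first inclusion. Your closedness sketch is therefore an attempt at something the paper itself does not supply, and as you correctly flag, the step handling radii $r \in \mathbb{N}$ with $\xis(\partial S_r) > 0$ via \thetag{A4} is not fully justified. In context the lemma is invoked only to illustrate that $\partial \Hcal_k$ can be large (see Remark~\ref{r:74}\,(2)), so the operative content is precisely the inclusion $\Hcal_k \subseteq \partial \Hcal_k$, which both you and the paper establish by the same mechanism.
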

\begin{proof}
For each $ \xis \in \mathcal{H}_k $, $\xit \in Q$ as in the 
assumption, and $q > r > 0$, 
we set $ \xiu_{r,q} = \pi_r(\xis)+\pi_r^c (\pi_q(\xit)) \in
 Q_{\text{fin}}$. 
We see that from \thetag{A4} 
\[
\hh(\xiu_{r,q}) 
= \hhr(\pi_r(\xis)) \hhr^c(\pi_q(\xit)) 
= \hhr(\xis) \frac{\hhq(\xit)}{\hhr(\xit)}
= \frac{\hhr(\xis)}{\hhr(\xit)} \hhq(\xit). 
\]
This implies that $\lim_{q \to \infty} \hh(\mathsf{u}_{r,q}) = \infty$ 
for any fixed $r>0$ since 
$\hhr(\xis) / \hhr(\xit) > 0$ from the assumptions of $\xis$ and
 $\xit$. Hence 
$\lim_{q \to \infty} \sup_{s \in \n} \hh_s(\xiu_{r,q}) = \infty$ 
because $\hh(\xiu_{r,q}) \le \sup_{s \in \n} \hh_s(\xiu_{r,q})$.   
Therefore, for fixed $r>0$, there exists a $q(r) > r$ such that 
$a_k < \sup_{s \in \n} \hh_s(\xiu_{r,q(r)})$, 
which yields that $\xiu_{r,q(r)} \in \Hcal_k^c$. 
Since $\lim_{r \to \infty} \xiu_{r,q(r)} = \xis$ 
in the vague topology, 
we conclude that $ \partial \mathcal{H}_k = \mathcal{H}_k$.
\qed\end{proof}

\begin{lem}\label{l:75}
The Radon-Nikodym derivative ${d\nu_k}/{d\nu}$ converges to $1$ 
$\nu$-a.s. as $k \to \infty$. 
\end{lem}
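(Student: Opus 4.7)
The plan is to exploit the monotonicity built into Lemma~7.3. Set $\hht_k = d(\alpha_k^{\nu}\nu_k)/d\nu$, which is well-defined in $L^{\infty}(\nu)$ by \lref{l:74} and the bound $\alpha_k^{\nu}\nu_k \le \nu$ from \eqref{:73c}. The same inequality \eqref{:73c} applied to two indices $k\le l$ gives $\alpha_k^{\nu}\nu_k \le \alpha_l^{\nu}\nu_l$, so the sequence $\{\hht_k\}$ is $\nu$-a.s.\ nondecreasing, while $\hht_k \le 1$ $\nu$-a.s.

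Consequently, $\hhtt:=\lim_{k\to\infty}\hht_k$ exists $\nu$-a.s.\ with $0\le \hhtt\le 1$. By the monotone convergence theorem together with the fact that $\nu_k$ is a probability measure,
\begin{equation*}
\int \hhtt\, d\nu \;=\; \lim_{k\to\infty}\int \hht_k\, d\nu \;=\; \lim_{k\to\infty}\alpha_k^{\nu}\;=\;1,
\end{equation*}
where the last equality uses \eqref{:73a}. Combined with $\hhtt\le 1$ $\nu$-a.s., this forces $\hhtt=1$ $\nu$-a.s.

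Since $d\nu_k/d\nu = \hht_k/\alpha_k^{\nu}$, and both $\hht_k \to 1$ $\nu$-a.s.\ and $\alpha_k^{\nu}\to 1$ by \eqref{:73a}, we conclude that $d\nu_k/d\nu \to 1$ $\nu$-a.s., as required. There is no real obstacle here: the whole argument is essentially bookkeeping about monotone increasing densities capped by $1$; the non-trivial content was already placed in Lemma~7.3 (monotonicity and the strong convergence \eqref{:73z}) and Lemma~7.4 (absolute continuity $\nu_k\ll\nu$).
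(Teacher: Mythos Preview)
Your proof is correct and follows essentially the same route as the paper: both arguments rest on the monotonicity $\alpha_k^{\nu}\,d\nu_k/d\nu \nearrow$ and the bound $\le 1$ coming from \eqref{:73c}, together with $\alpha_k^{\nu}\to 1$ from \eqref{:73a}. The only difference is in how the limit is identified as $1$: the paper invokes the strong convergence \eqref{:73z} and the bounded convergence theorem against test functions $\phi\in C_b(Q)$, whereas you integrate the monotone limit directly (monotone convergence on the total mass) and use the constraint $\le 1$ to force equality---this is a slight streamlining since it does not require \eqref{:73z}.
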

\begin{proof} From \eqref{:73c} and \lref{l:74}, we have
 that 
\begin{equation}
\alpha^{\nu}_k \frac{d\nu_k}{d\nu}
\le \alpha^{\nu}_l \frac{d\nu_l}{d\nu} \le 1 
\quad \text{$\nu$-a.s.} 
\label{:75a}
\end{equation}
for $k \le l$. Since $\alpha^{\nu}_k \frac{d\nu_k}{d\nu}$ is 
non-decreasing in $k$ and $\alpha^{\nu}_k \nearrow 1$ 
by \eqref{:73a}, we see that 
$\lim_{k \to \infty} \frac{d\nu_k}{d\nu}$ 
exists and is bounded by $1$ $\nu$-a.s. 
By \lref{l:73} and the bounded convergence theorem, we obtain 
\begin{equation}
\int_Q \phi d\nu 
= \lim_{k \to \infty} \int_Q \phi d\nu_k
= \int_Q \phi \left(\lim_{k \to \infty} \frac{d\nu_k}{d\nu}\right) d\nu 
\notag
\end{equation}
for $\phi \in \Cb$. Therefore 
$\lim_{k \to \infty} \frac{d\nu_k}{d\nu} =1$ $\nu$-a.s. 
\qed\end{proof}
\begin{lem}\label{l:76}
Let $\{\nn_p\}_{p\in\n}$ be as in \lref{l:73}. For $  k ,  r \in \mathbb{N}$, 
we have 
\begin{align}
\label{:76b}&
\lim_{p \to \infty} \intQ \phi  \hh _r d\nu_k^{\nn_p }  
= \intQ \phi  \hh _r d\nu_k \quad \text{ for all $\phi \in C_{b}(Q) $}
.\end{align}
\end{lem}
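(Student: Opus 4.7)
The plan is to combine weak convergence $\nu^{\nn_p}_k \Rightarrow \nu_k$ from \lref{l:73} with a bounded almost-everywhere-continuous form of Portmanteau. The function $\phi \hh_r$ itself is a priori neither bounded nor continuous on all of $Q$, so a naive application of weak convergence fails. Both measures, however, are supported on $\Hcal_k$ (by construction for $\nu^{\nn_p}_k$, and by \lref{l:74} for $\nu_k$), and on $\Hcal_k$ one has $a_k^{-1}\le \hh_r \le a_k$ for every $r$. So we should replace $\phi \hh_r$ by a globally bounded surrogate that agrees with it on $\Hcal_k$.

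Concretely, I would introduce the truncation
\[
\tilde g(\xis) := \phi(\xis)\bigl[(\hh_r(\xis)\wedge a_k)\vee a_k^{-1}\bigr],
\]
which is bounded by $a_k\|\phi\|_\infty$ on all of $Q$ and agrees with $\phi \hh_r$ on $\Hcal_k$. Since $\phi$ is continuous and the truncation operators $\wedge a_k$ and $\vee a_k^{-1}$ preserve continuity,
\[
\mathrm{Dcp}(\tilde g)\subset \mathrm{Dcp}(\hh_r).
\]
By \thetag{A1}, $\nu(\mathrm{Dcp}(\hh_r))=0$, and by \lref{l:74}, $\nu_k \ll \nu$, hence $\nu_k(\mathrm{Dcp}(\tilde g))=0$.

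Now I apply the Portmanteau theorem in the form: if $\mu_p\Rightarrow \mu$ and $g$ is bounded measurable with $\mu(\mathrm{Dcp}(g))=0$, then $\int g\, d\mu_p\to \int g\, d\mu$. Taking $\mu_p=\nu^{\nn_p}_k$, $\mu=\nu_k$, $g=\tilde g$, this yields
\[
\lim_{p\to\infty}\int_Q \tilde g\, d\nu^{\nn_p}_k = \int_Q \tilde g\, d\nu_k.
\]
Finally, since $\nu^{\nn_p}_k(\Hcal_k)=1$ and $\nu_k(\Hcal_k)=1$, and since $\tilde g=\phi\hh_r$ on $\Hcal_k$, both sides rewrite as integrals of $\phi\hh_r$, giving \eqref{:76b}.

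The only delicate point is justifying $\nu_k(\mathrm{Dcp}(\tilde g))=0$; this is precisely where \thetag{A1} (almost-sure continuity of $\hh_r$ under $\nu$) combined with $\nu_k\ll\nu$ from \lref{l:74} is needed, together with the observation that truncation cannot create new discontinuities. The rest is a routine invocation of Portmanteau and of the fact that the conditional measures $\nu^{\nn_p}_k$ and the limit $\nu_k$ are both concentrated on $\Hcal_k$, which trivializes the effect of the truncation.
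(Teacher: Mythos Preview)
Your proof is correct and follows essentially the same approach as the paper: both combine \thetag{A1} with the absolute continuity $\nu_k\ll\nu$ from \lref{l:74} to obtain $\nu_k(\mathrm{Dcp}(\hh_r))=0$, then invoke Portmanteau together with the weak convergence $\nu_k^{\nn_p}\Rightarrow\nu_k$. The paper's proof is terser and leaves the boundedness of $\phi\hh_r$ implicit (relying on $\nu_k^{\nn_p}(\Hcal_k)=\nu_k(\Hcal_k)=1$), whereas your explicit truncation $\tilde g$ makes this step rigorous and transparent.
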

\begin{proof}
From \thetag{A1} and \lref{l:74}, we obtain 
$ \nu_{k}(\mathrm{Dcp}(\hh _r)) =0$. 
This combined with the weak convergence of $\nu _{k}^{\nn _p} $ 
to $\nu_k$ yields \eqref{:76b}. 
\qed\end{proof}

As mentioned before, \lref{l:72}--\lref{l:75} hold 
for $\{\mu_k^{\nn}\}_{\nn \in \n}$ under the assumptions 
\thetag{A1}--\thetag{A3}. In particular, 
$\{\mu_k^{\nn}\}_{\nn \in \n}$ is tight in $ \nn $ for all $ k $. 
Therefore, by retaking a subsequence if necessary, 
we may assume that there exists
an increasing sequence $\{\nn_p\}_{p \in \n}$ of natural numbers such that 
$(\mu_k^{\nn_p}, \nu_k^{\nn_p}, \nu^{\nn_p}(\Hcal_k))$ 
converges to $(\mu_k, \nu_k, \alpha_k^{\nu})$ for all $k$ as $ p\to\infty$.  
In what follows, we use the symbol $\{\nn_p\}_{p \in \n}$ for such a sequence. 

\begin{lem}\label{l:77} 
Let $ \mu_k $ and $ \nu_k$ be 
the limits of $ \{ \mu_k^{\nn_p} \}_{p\in\n} $ and 
$ \{ \nu_k^{\nn_p} \}_{p\in\n} $, respectively. 
Then there exists a constant $\zeta_k \in (0,\infty)$ such that 
\begin{align}\label{:77a}&
\intQ \phi d\mu_k
= \zeta_k \lim_{r \to \infty} \intQ \phi\hh _r d\nu_k 
\quad \text{ for all $ \phi \in \Cb $}
.\end{align}
The constant 
$\zeta_k$ depends on $\{\nn_p\}_{p \in \n}$ such that  
\begin{align}\label{:77k}&\quad \quad 
\zeta_k = \lim_{p \to \infty} 
\frac{\ZZ ({\nu_k^{\nn_p }})}{\ZZ ({\mu_k^{\nn_p }})}
\quad \text{ for each $k \in \n$. }
\end{align}
\end{lem}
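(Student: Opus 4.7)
The plan is to leverage the exact Radon-Nikodym identity between the conditioned finite approximations, then split $\hh$ multiplicatively via \thetag{A4} into a bulk part $\hh _r$ (handled by \lref{l:76}) and a tail part $\hh _r^c$ (controlled uniformly in $\nn$ by \thetag{A5}), and finally exchange the two limits $p \to \infty$ and $r \to \infty$.

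First I set up the finite-$\nn$ identity. From \thetag{A2} we have on $Q_{\nn}$ that $d\mu^{\nn}/d\nu^{\nn} = \hh \cdot \ZZ(\nu^{\nn})/\ZZ(\mu^{\nn})$. Since $\mu^{\nn}_k$ and $\nu^{\nn}_k$ are both supported on $\Hcal_k \cap Q_{\nn} \subset Q_{\text{fin}}$, where $\hh$ is defined, conditioning on $\Hcal_k$ yields, for every $\phi \in \Cb$,
\[
\int_Q \phi\, d\mu^{\nn}_k = \zeta_k^{\nn} \int_Q \phi\, \hh\, d\nu^{\nn}_k,\qquad \zeta_k^{\nn} := \frac{\nu^{\nn}(\Hcal_k)\,\ZZ(\nu^{\nn})}{\mu^{\nn}(\Hcal_k)\,\ZZ(\mu^{\nn})} = \frac{\ZZ(\nu_k^{\nn})}{\ZZ(\mu_k^{\nn})}.
\]
For any $\xis \in \Hcal_k \cap Q_{\text{fin}}$, taking $r$ so large that $\pi_r(\xis) = \xis$ gives $\hh(\xis) = \hh _r(\xis) \in [a_k^{-1}, a_k]$ by the definition \eqref{:71g} of $\Hcal_k$. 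Testing against $\phi \equiv 1$ then forces $a_k^{-1} \le \zeta_k^{\nn} \le a_k$ for all $\nn$, and a diagonal Bolzano-Weierstrass argument (refining $\{\nn_p\}$ jointly in $k$ if necessary) allows me to assume $\zeta_k := \lim_{p \to \infty} \zeta_k^{\nn_p}$ exists in $[a_k^{-1}, a_k] \subset (0,\infty)$.

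Now I apply \thetag{A4} to split $\hh = \hh _r \cdot \hh _r^c$ on $Q_{\text{fin}}$, giving the exact decomposition
\[
\int_Q \phi\, d\mu^{\nn}_k = \zeta_k^{\nn} \int_Q \phi\, \hh _r\, d\nu^{\nn}_k + \zeta_k^{\nn} \int_Q \phi\, \hh _r (\hh _r^c - 1)\, d\nu^{\nn}_k.
\]
Since $|\hh _r| \le a_k$ on $\Hcal_k$ and $\zeta_k^{\nn} \le a_k$, the remainder is dominated by $a_k^2 \|\phi\|_{\infty} \int_Q |\hh _r^c - 1|\, d\nu^{\nn}_k$, which by \thetag{A5} is bounded by $a_k^2 \|\phi\|_{\infty}\,\eps_r$ with $\eps_r \to 0$ as $r \to \infty$, uniformly in $\nn$. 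With $r$ fixed, letting $p \to \infty$ along $\{\nn_p\}$: the left-hand side tends to $\int \phi\, d\mu_k$ by weak convergence; the first term on the right tends to $\zeta_k \int \phi\, \hh _r\, d\nu_k$ by \lref{l:76} combined with $\zeta_k^{\nn_p} \to \zeta_k$; the uniform remainder bound passes to the limit intact, giving
\[
\Bigl| \int_Q \phi\, d\mu_k - \zeta_k \int_Q \phi\, \hh _r\, d\nu_k \Bigr| \le a_k^2 \|\phi\|_{\infty}\, \eps_r.
\]
Sending $r \to \infty$ produces \eqref{:77a}; the limit on the right-hand side exists because the $r$-independent left-hand side forces it to. Formula \eqref{:77k} is then immediate from the definition of $\zeta_k^{\nn}$.

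The main obstacle is the interchange of the limits in $p$ and $r$. The weak limits $\mu_k, \nu_k$ are not a priori carried by the finite-configuration set $Q_{\text{fin}}$ on which $\hh$ lives, so one cannot pass to $\hh$ directly under the $p \to \infty$ limit; one must truncate to $\hh _r$ first, for which \lref{l:76} applies, and only afterwards send $r \to \infty$. This two-stage passage is legitimate precisely because \thetag{A5} supplies a tail estimate on $\hh _r^c - 1$ that is uniform in $\nn$, so the error survives the weak limit with the same bound. Without this uniformity the argument would break, since the finite approximations $\nu^{\nn_p}_k$ and $\nu_k$ are mutually singular in general.
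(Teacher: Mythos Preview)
Your argument is essentially the paper's own proof: bound the ratios $\zeta_k^{\nn}$, extract a convergent subsequence, split $\hh = \hh_r \hh_r^c$ via \thetag{A4}, control the tail uniformly in $\nn$ via \thetag{A5}, pass to the limit in $p$ via \lref{l:76}, and then let $r\to\infty$. The only point you gloss over is the final clause ``Formula \eqref{:77k} is then immediate'': after refining $\{\nn_p\}$ you have established \eqref{:77a} with $\zeta_k$ the limit along the \emph{refinement}, whereas \eqref{:77k} asserts convergence along the original sequence $\{\nn_p\}$ fixed in \lref{l:73}. The paper closes this by observing that both sides of \eqref{:77a} involve only $\mu_k$ and $\nu_k$, which are the same for every subsequence of $\{\nn_p\}$; hence (taking $\phi\equiv 1$, say) every convergent subsequence of $\{\zeta_k^{\nn_p}\}_p$ yields the same $\zeta_k$, and since the sequence is bounded in $[a_k^{-1},a_k]$ the full sequence converges.
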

\begin{proof} Let $\iota_{\nn} : R^{\nn} \to Q$ be defined by 
$\iota_{\nn}(s_1,\dots,s_{\nn}) = \sum_{i=1}^{\nn} \delta_{s_i}$. 
We put $\Hcal_k^{\nn} = \iota_{\nn}^{-1}(\Hcal_k) \subset R^n$. 
We define 
\begin{equation}\notag
\ZZ ({\nu_k^{\nn} })
= \int_{\Hcal_k^{\nn }} f_{\nu}(\s) \radon^{\otimes \nn }(d\s), \quad  
\ZZ ({\mu_k^{\nn} })
= \int_{\Hcal_k^{\nn }} f_{\mu}(\s) \radon^{\otimes \nn }(d\s). 
\end{equation}
Then, we deduce from \eqref{:71a}--\eqref{:71c} that 
\begin{align*}
 \frac{\ZZ ({\nu_k^{\nn} })}{\ZZ ({\mu_k^{\nn }})}
&= \frac{\int_{\Hcal_k^{\nn }} f_{\mu}(\s) \{\hh (\s)^{-1}-1\}
\radon^{\otimes \nn }(d\s)}{\int_{\Hcal_k^{\nn }} f_{\mu}(\s) 
\radon^{\otimes \nn }(d\s)}  
+1 \\
&\le \max\{|a_k-1|, |1-a_k^{-1}|\} +1. 
\end{align*}
Hence there exists a subsequence 
$\{ \nn_s \}_{s\in\n}$ of $ \{ \nn_p \}_{p\in\n} $ 
with a limit $ \zeta_k $ such that 
\begin{align}\label{:77b}&
\lim_{s \to \infty} 
\frac{\ZZ ({\nu_k^{\nn_s }})}{\ZZ ({\mu_k^{\nn_s }})} = \zeta_k 
< \infty 
\end{align}
for each $k \in \n $. 
Interchanging the role of $ \mu_k $ and $ \nu_k $, 
we also see that $ \zeta_k > 0 $. 

We deduce from \thetag{A2} and \thetag{A4} that 
\begin{align} \label{:77c}
 \intQ \phi d\mu_k^{\nn } 
&= \frac{\ZZ ({\nu _{k}^{\nn }})}{\ZZ ({\mu_k^{\nn }})}
\intQ \phi\hh _r\hh _r^c d\nu^{\nn }_k 
\\ \notag 
&=   \frac{\ZZ ({\nu _{k}^{\nn }})}{\ZZ ({\mu_k^{\nn }})}
\{\intQ \phi\hh _r (\hh _r^c -1) d\nu^{\nn }_k 
+\intQ \phi\hh _r d\nu^{\nn }_k\}
.\end{align}
From \thetag{A5} and \eqref{:71g}, 
we obtain that, as $  r\to \infty $,  
\begin{align}\label{:77d}&
\limsup_{p \to \infty } 
| \intQ \phi\hh _r (\hh _r^c -1) d\nu^{\nn_p }_k | 
 \le  
\|\phi\|_{\infty}a_k o_r(1) 
.\end{align}
Since $\{ \nn_s \}_{s\in\n}$ is a subsequence of $ \{ \nn_p \}_{p\in\n} $, 
the sequences $ \{ \nu_k^{\nn_s }  \} $ and 
$ \{ \mu_k^{\nn_s }  \} $ converge to $ \nu_k $ and $ \mu_k $,
 respectively. 
Putting these, \lref{l:76}, \eqref{:77b}, and \eqref{:77d} into \eqref{:77c}, 
we obtain \eqref{:77a}. 

We note that $ \int_Q \phi d\mu_k $ and 
$ \lim_{r \to \infty} \int_Q \phi\hh _r d\nu_k$ are 
independent of the choice of the subsequence $ \{ \nn_s \}_{s\in\n} $ defining $ \zeta_k$. 
Hence, we deduce from \eqref{:77a} that 
$ \zeta_k$ is independent of the choice of $ \{ \nn_s \}_{s\in\n} $ 
(but may depend on $ \{ \nn_p \}_{p\in\n} $ at this stage). 
We have thus completed the proof. 
\qed\end{proof}

\begin{rem}\label{r:78}
By the definition of the normalization constants $\ZZ(\cdot)$'s, 
we see that $\ZZ(\nu^{\nn}) = \nu^{\nn}(\Hcal_k)^{-1}
 \ZZ(\nu_k^{\nn})$. 
By \lref{l:73} and \eqref{:77k}, we have 
\begin{equation}
  \lim_{p \to \infty} 
\frac{\ZZ(\nu^{\nn_p})}{\ZZ(\mu^{\nn_p})}
= \frac{\alpha_k^{\mu}}{\alpha_k^{\nu}} \zeta_k.  
\label{:78a}
\end{equation}
Note that the left-hand side of \eqref{:78a} does not depend on $k$, and
 hence so does the right-hand side. 
Since 
$\lim_{k \to \infty} \alpha_k^{\mu} = 
\lim_{k \to \infty} \alpha_k^{\nu}
 = 1$, we obtain from \eqref{:78a} that 
\begin{equation}
\zeta := \lim_{k \to \infty} \zeta_k 
= \lim_{p \to \infty} \frac{\ZZ(\nu^{\nn_p})}{\ZZ(\mu^{\nn_p})}
\label{:78b}
.\end{equation}
We will prove in \pref{l:7X} that 
 $ \zeta $ is independent of the choice of $ \{\nn_p\}_{p\in\n} $. 
\end{rem}

\begin{prop}\label{l:79}
Let $ \nu _{k} $ be as in \lref{l:73}. 
Then there exists a unique $ \hhi \in L^1(Q ,\nu )$ such that, for each $ k \in \mathbb{N}$,  
\begin{align}\label{:79a}&
\hhi = \lim_{r \to \infty} \hhr 
\quad \text{ weakly in } L^1(\nu _{k} )
.\end{align}
Furthermore, $\hh_{\infty}$ does not depend on the choice of 
a subsequence $ \{\nn_p\}_{p\in\n} $ 
and $\hh_\infty$ is not identically zero. 
\end{prop}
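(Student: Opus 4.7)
The plan is to produce, for each fixed $k\in\n$, a weak $L^1(\nu_k)$-limit $\hhik$ of $\{\hhr\}_r$, establish compatibility of these limits as $k$ varies, and glue them into a single $\hhi\in L^1(\nu)$ via a monotone convergence argument on the Radon-Nikodym densities of $\alpha_k^\mu\mu_k$ with respect to $\nu$.

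For fixed $k$, \lref{l:74} gives $\nu_k(\Hcal_k)=1$, and the definition \eqref{:71g} of $\Hcal_k$ forces $a_k^{-1}\le\hhr\le a_k$ on $\Hcal_k$. Hence $\{\hhr\}$ is uniformly bounded in $L^\infty(\nu_k)\subset L^2(\nu_k)$, and Banach--Alaoglu yields weakly convergent subsequences. Any weak limit $\hh^{\ast}$ tests against $\phi\in C_b(Q)\subset L^2(\nu_k)$ via \lref{l:77} to give $\int\phi\hh^{\ast}d\nu_k=\zeta_k^{-1}\int\phi\,d\mu_k$; since $C_b(Q)$ is dense in $L^2(\nu_k)$ (as $\nu_k$ is a Borel probability on the Polish space $Q$), $\hh^{\ast}$ is uniquely determined, which I denote $\hhik$. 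Thus the full sequence $\hhr\to\hhik$ weakly in $L^2(\nu_k)$, hence in $L^1(\nu_k)$, and $\zeta_k\hhik\,d\nu_k=d\mu_k$. For compatibility with $k\le l$, the inequality $\alpha_k^\nu\nu_k\le\alpha_l^\nu\nu_l$ from \lref{l:73} gives $\nu_k\ll\nu_l$ with bounded density $\rho_{k,l}:=d\nu_k/d\nu_l\le\alpha_l^\nu/\alpha_k^\nu$. Testing the weak $L^1(\nu_l)$-limit against $\phi\rho_{k,l}\in L^\infty(\nu_l)$ for $\phi\in C_b(Q)$ yields $\int\phi\hhik\,d\nu_k=\int\phi\hhi^{l}\,d\nu_k$, so $\hhik=\hhi^{l}$ $\nu_k$-a.e.\ for all $k\le l$.

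To pass to $\nu$, I set $\psi_k:=d\mu_k/d\nu=\zeta_k\hhik(d\nu_k/d\nu)\in L^1(\nu)$, well-defined since $\mu_k\ll\nu_k\ll\nu$. The monotonicity $\alpha_k^\mu\mu_k\le\alpha_l^\mu\mu_l$ (from \lref{l:73} applied to $\mu$) transports to $\alpha_k^\mu\psi_k\le\alpha_l^\mu\psi_l$ $\nu$-a.e., so $\psi_\infty:=\lim_k\alpha_k^\mu\psi_k$ exists $\nu$-a.e., and monotone convergence gives $\int\psi_\infty\,d\nu=\lim_k\alpha_k^\mu\mu_k(Q)=1$. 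Since $\alpha_k^\mu\mu_k\nearrow\mu$ (by \lref{l:73} together with $\alpha_k^\mu\nearrow 1$), we conclude $\psi_\infty\,d\nu=\mu$, so $\mu\ll\nu$ with $d\mu/d\nu=\psi_\infty$---an absolute-continuity byproduct that underpins \tref{l:21}. I then set $\hhi:=\zeta^{-1}\psi_\infty\in L^1(\nu)$ with $\zeta:=\lim_k\zeta_k$ from \rref{r:78}; then $\int\hhi\,d\nu=\zeta^{-1}>0$, so $\hhi$ is not identically zero.

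To identify $\hhi$ with the weak $L^1(\nu_k)$-limit, I would use the identity $\alpha_k^\mu\zeta_k=\zeta\alpha_k^\nu$ from \eqref{:78a} to rewrite $\alpha_k^\mu\psi_k=\zeta(\alpha_k^\nu d\nu_k/d\nu)\hhik$. Applying the argument of \lref{l:75} to $\nu$, $\alpha_k^\nu d\nu_k/d\nu\nearrow 1$ $\nu$-a.e.; combining with the compatibility $\hhi^{l}=\hhik$ $\nu_k$-a.e.\ for $l\ge k$, letting $l\to\infty$ yields $\psi_\infty=\zeta\hhik$ $\nu_k$-a.e., i.e., $\hhi=\hhik$ $\nu_k$-a.e. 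Uniqueness of $\hhi$ in $L^1(\nu)$ then follows because $\bigcup_k\Hcal_k$ has full $\nu$-measure (from $\nu(\Hcal_k)\ge\alpha_k^\nu\nearrow 1$), so $\hhi$ is determined $\nu$-a.e.\ by its values on each $\Hcal_k$. The main obstacle is the subsequence-independence claim: while $\psi_\infty=d\mu/d\nu$ is manifestly intrinsic, the normalization $\zeta$ is a priori subsequence-dependent, and its intrinsic nature is the content of the subsequent \pref{l:7X}, from which subsequence-independence of $\hhi$ then follows.
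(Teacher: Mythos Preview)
Your existence argument is sound and in fact takes a somewhat different path from the paper: rather than gluing the limits $\hhik$ together directly, you first construct $\psi_\infty=d\mu/d\nu$ via monotone convergence of $\alpha_k^\mu\psi_k$ (thereby proving $\mu\ll\nu$ as a byproduct), and then identify $\hhik=\zeta^{-1}\psi_\infty$ $\nu_k$-a.e.\ using the relation $\alpha_k^\mu\zeta_k=\zeta\alpha_k^\nu$ from \rref{r:78}. This is a legitimate alternative to the paper's direct patching, and arguably more conceptual since it exhibits the intrinsic object $d\mu/d\nu$ early.

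The genuine gap is in the subsequence-independence claim, where your deferral to \pref{l:7X} is circular. The paper's proof of \pref{l:7X} establishes that $\zeta=(\int\hhi\,d\nu)^{-1}$ and then invokes the subsequence-independence of $\hhi$ \emph{from \pref{l:79}} to conclude the same for $\zeta$. So you cannot use \pref{l:7X} to obtain independence of $\zeta$ without already knowing it for $\hhi$. Your own relation $\psi_\infty=\zeta\hhi$ does not help either: it is a single equation in the two subsequence-dependent quantities $\zeta$ and $\hhi$, and plugging it back gives only the tautology $\zeta=\zeta/\int\psi_\infty\,d\nu=\zeta$.

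The paper closes this gap by a direct comparison that bypasses $\mu_k$ and $\zeta$ entirely. Given two subsequential limits $\nu_k,\tilde\nu_k$ with corresponding weak limits $\hhi,\tilde\hh_\infty$ of the \emph{same} sequence $\{\hhr\}$, one works on the sets
\[
A_k=\Big\{\eps\le\frac{d\nu_k}{d\nu}\le\eps^{-1},\ \eps\le\frac{d\tilde\nu_k}{d\nu}\le\eps^{-1}\Big\},
\]
on which $\phi\,(d\nu_k/d\nu)(d\tilde\nu_k/d\nu)^{-1}\in L^\infty(\nu)$ for any bounded $\phi$. Testing the weak $L^1(\tilde\nu_k)$-convergence against this function and changing back to $\nu_k$ gives $\int\phi\hhi\,d\nu_k=\int\phi\tilde\hh_\infty\,d\nu_k$ for all such $\phi$, hence $\hhi=\tilde\hh_\infty$ $\nu$-a.e.\ on $A_k$; since $\nu(\bigcup_kA_k)=1$ by \lref{l:75}, the two limits agree $\nu$-a.e. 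This is the missing ingredient you need to supply in place of the appeal to \pref{l:7X}.
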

\begin{proof}
From \eqref{:71g} and $ \nu_k (\mathcal{H}_k) = 1  $ by \lref{l:74}, 
we deduce that, for all $ r \in \n $, 
\[
\|\hh _r \Hj \|_{L^{\infty}(\nu_k)} \le a_{\kk } 
.\]
We can take a common subsequence $\{r_m\}_{m} \subset (0,\infty)$ 
tending to $\infty$ such that, for all $ \kk \in \n$, the subsequence
$ \{ \hh _{r_m}\Hj \}_{m}$ is an $\Lone (\nu_k)$-weak 
convergent sequence with limit $ \hh _{\infty}^k\Hj $ 
satisfying $\|\hh _{\infty}^k\Hj \|_{L^{\infty}(\nu_k)} \le a_{\kk }$ 
for each $ \kk \in \n $. 
Here $ \hh _{\infty}^k $ is a function on $ Q $. 
From \eqref{:73c} we have that $ \Linu \subset L^{\infty}(\nu_k)$. 
These yield, for all $ \kk \in \n $, 
\begin{align}\label{:79b}&
\lim_{m \to \infty} \int \phi\hh _{r_m} \Hj d\nu_k 
= \int \phi\hh _{\infty}^k \Hj d\nu_k  
\quad \text{ for all $\phi  \in  \Linu   $}
.\end{align}
For $ k \le l $, we see that 
\begin{align}\label{:79c}
\int \phi\hh _{\infty}^k \Hj d\nu_k 
&= \lim_{m \to \infty} \int \phi\hh _{r_m} \Hj d\nu_k 
= 
\lim_{m \to \infty} 
\int \phi\hh _{r_m} \Hj 
\frac{d\nu_k}{d\nu_l}d\nu_l 
\\ \notag & = 
\int \phi\hh _{\infty}^l \Hj \frac{d\nu_k}{d\nu_l}d\nu_l 
= 
\int \phi\hh _{\infty}^l \Hj d\nu_k 
\quad \text{ for all } \phi \in \Linu   
.\end{align}
We used here $ \phi \frac{d\nu_k}{d\nu_l} \in L^{\infty}(\nu_l)$
 by the inequality 
$\frac{d\nu_k}{d\nu_l} \le \frac{\alpha^{\nu}_{l}}{\alpha^{\nu}_{k}}$, 
which follows from \eqref{:73c}. 
From \eqref{:79c}, we obtain the consistency such that, for all $ \kk \in \n $, 
\begin{align}\label{:79e}&\quad \quad 
\text{$ \hh _{\infty}^k  
= \hh _{\infty}^l  $ \quad \text{ for } $ \nu_k$-a.s.\ for all 
$ k \le l \in \n $}
.\end{align}
From \eqref{:73c} and $\nu_k(\Hcal_k)=1$ in \lref{l:74}, 
we deduce that $ \nu (\cup_{k=1}^{\infty}\mathcal{H}_{k} ) = 1 $. 
We then deduce from this and \eqref{:79e} that 
there exists a function $ \hh _{\infty} $ defined for $ \nu $-a.s.\ such that 
\begin{align*}& \quad \quad 
\text{$ \hh _{\infty} (\xis ) = \hh _{\infty}^k (\xis )  $ \quad 
 for $ \nu_k $-a.s.\ for all $ k \in \mathbb{N}$. }
\end{align*}
Hence we can rewrite \eqref{:79b} as 
\begin{align}\label{:79f}&
\lim_{m \to \infty} 
\int \phi\hh _{r_m} \Hj d\nu_k 
= \int \phi\hh _{\infty} \Hj d\nu_k  
\text{ for all $\phi  \in  L^{\infty}(\nu)   $}.
\end{align}
We deduce from \eqref{:79f} and \lref{l:77} that 
\begin{align}\label{:79g}&
\int \phi d\mu_k = \zeta_k \int \phi\hhi d\nu_k  
\quad \text{ for all $\phi  \in C_b(Q)   $}
.\end{align}
From \eqref{:79g}, 
the limit $ \hh _{\infty} $ in \eqref{:79f} is unique for $ \nu_k $-a.s. 
and, as a result, the whole sequence $\{\hh _r\}$ converges to 
$ \hh _{\infty} $ weakly in $L^1(\nu_k)$ for all $ k \in \mathbb{N}$. 
That is, 
\begin{align}\label{:79h}&
\limi{r} \int \phi\hh _{r} d\nu_k = \int \phi\hh _{\infty} d\nu_k  
\quad \text{ for all $\phi  \in  \Lnui  $}
.\end{align}
We remark that, at this stage, $\hhi$ in \eqref{:79h} may depend on 
$\{\nn_p\}_{p \in \n}$ defining $\{\nu_k\}_{k \in \n}$. Our next task is to 
prove that $\hhi$ does not depend on $\{\nn_p\}_{p \in \n}$. 

Let $\nu_k$ and $\nutilde_k$ be two subsequential limits, 
and $\hh_{\infty}$ and $\tilde{\hh}_{\infty}$ the corresponding 
limits of $\hh_r$ as in \eqref{:79h}, respectively. 
We then obtain from \eqref{:79h} that 
\begin{align}\label{:79i}&
\limi{r} \int \phi\hh _{r} d\nutilde_k 
= \int \phi\tilde{\hh}_{\infty} d\nutilde_k  
\quad \text{ for all $\phi  \in  \Lnui  $}
.\end{align}
For a fixed $\epsilon \in (0,1)$ and $k \in \n$, we let 
\begin{equation}
 A_k = \{\xis \in Q ; \eps \le \frac{d\nu_k}{d\nu} \le \eps^{-1},
  \ \eps \le \frac{d\nutilde_k}{d\nu} \le \eps^{-1} \}. 
\label{:79j} 
\end{equation}
From \lref{l:75}, we see that $\lim_{k \to \infty} \frac{d\nu_k}{d\nu}
= \lim_{k \to \infty} \frac{d\nutilde_k}{d\nu} =1$ $\nu$-a.s. 
 Hence, 
\begin{align}\label{:79p}&
\nu(\bigcup_{k \in \n} A_k ) =1
.\end{align}
For $\psi \in \Lnui $, set $\phi = \psi I_{A_k} \in \Lnui $. 
From \eqref{:79j} and $\phi = \psi I_{A_k} \in \Lnui $, we then see that 
\begin{align}\label{:79k}&
\phi \frac{d\nu_k}{d\nu} \left( \frac{d\nutilde_k}{d\nu} \right)^{-1} 
\in \Lnui 
.\end{align}
Hence, we deduce from \eqref{:79h}, \eqref{:79i}, and \eqref{:79k} that 
\begin{align} \notag 
 \int \hh_{\infty} \phi d\nu_k
&= \limi{r} \int \hh_r \phi d\nu_k \\\notag 
&= \limi{r} \int \hh_r \phi \frac{d\nu_k}{d\nu} 
\left( \frac{d\nutilde_k}{d\nu} \right)^{-1} d\nutilde_k \\\notag 
&= \int \tilde{\hh}_{\infty} \phi \frac{d\nu_k}{d\nu} 
\left( \frac{d\nutilde_k}{d\nu} \right)^{-1} d\nutilde_k 
= \int \tilde{\hh}_{\infty} \phi d\nu_k. 
\end{align}
This implies that 
$\hh_{\infty} = \tilde{\hh}_{\infty}$ $\nu$-a.e. on $A_k$ for 
any $k \in \n$. Therefore, from \eqref{:79p}, we conclude that 
$\hh_{\infty} = \tilde{\hh}_{\infty}$ $\nu$-a.s. 

Putting $\phi \equiv 1$ in \eqref{:79g}, we have 
$1 = \zeta_k \int \hhi d\nu_k. $
We have already seen that $\zeta_k \in (0, \infty)$. Therefore, 
$\hhi$ cannot be identically zero on $\Hcal_k$ for every $k$, 
and hence $\hhi$ is not identically zero by its definition. 
\qed\end{proof}

\begin{prop}\label{l:7X}
Let $ \zeta_k $, $ \zeta $, and $ \{\nn_p\}_{p\in\n} $ 
be as in \lref{l:77} and \rref{r:78}. 
Then $ \zeta $ is unique in the sense that $\zeta $ 
does not depend on the choice of a subsequence 
$ \{\nn_p\}_{p\in\n} $. 
Furthermore, $ \zeta $ is given by 
\begin{equation}
\zeta = \lim_{\nn \to \infty} 
\frac{\ZZ(\nu^{\nn})}{\ZZ(\mu^{\nn})}
\label{:7Xa}
.\end{equation}
\end{prop}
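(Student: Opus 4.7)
The plan is to derive an intrinsic formula for $\zeta$ that does not involve any subsequence, by using the weak limit $\hhi$ constructed in \pref{l:79} (which we already know is independent of $\{\nn_p\}$), and then to promote the resulting subsequence-limit to a full-sequence limit by the standard ``every subsequence has a further subsequence'' principle.

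First, I would put $\phi \equiv 1$ in \eqref{:79g} to obtain the identity $1 = \zeta_k \int_Q \hhi \, d\nu_k$, so that $\zeta_k = \bigl(\int_Q \hhi \, d\nu_k\bigr)^{-1}$ for each $k \in \n$. Next, I want to pass $k\to\infty$ inside $\int \hhi \, d\nu_k$. By \eqref{:73c} we have $\alpha_k^{\nu} \frac{d\nu_k}{d\nu} \le 1$ $\nu$-a.s., and by \lref{l:75} $\frac{d\nu_k}{d\nu} \to 1$ $\nu$-a.s.; since $\alpha_k^{\nu} \to 1$ by \eqref{:73a}, the Radon–Nikodym derivatives $\frac{d\nu_k}{d\nu}$ are uniformly bounded (say by $2$) $\nu$-a.s.\ for all sufficiently large $k$. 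Since $\hhi \in L^1(\nu)$ by \pref{l:79}, the dominated convergence theorem yields
\[
\int_Q \hhi \, d\nu_k = \int_Q \hhi \, \frac{d\nu_k}{d\nu}\, d\nu \;\longrightarrow\; \int_Q \hhi \, d\nu \qquad (k \to \infty).
\]
Combining this with \eqref{:78b} gives $\zeta = \lim_{k\to\infty} \zeta_k = \bigl(\int_Q \hhi \, d\nu\bigr)^{-1}$, an expression in which neither $\hhi$ (by \pref{l:79}) nor $\nu$ depends on the subsequence $\{\nn_p\}$. This proves the first assertion of the proposition.

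For the identity \eqref{:7Xa}, I would apply the usual subsequence argument. Let $\{\nn'\}$ be an arbitrary subsequence of $\n$. By \lref{l:72} and the diagonal extraction used just before \lref{l:77}, one can extract a further subsequence $\{\nn''\} \subset \{\nn'\}$ along which all the constructions of \lref{l:73}--\lref{l:77} apply, and along which \eqref{:78b} forces $\ZZ(\nu^{\nn''})/\ZZ(\mu^{\nn''}) \to \zeta$. By the first part of the proposition the limit is the same constant $\zeta$ no matter which subsequence we started from, and hence the full sequence $\ZZ(\nu^{\nn})/\ZZ(\mu^{\nn})$ converges to $\zeta$.

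The one place requiring genuine care is the interchange of limit and integral in the passage from $\int \hhi\, d\nu_k$ to $\int \hhi\, d\nu$: one must cite both the pointwise $\nu$-a.s.\ convergence $\frac{d\nu_k}{d\nu} \to 1$ from \lref{l:75} and the monotonicity-based uniform bound that comes from \eqref{:75a} together with $\alpha_k^{\nu}\nearrow 1$. Everything else is bookkeeping built on the independence of $\hhi$ established in \pref{l:79}.
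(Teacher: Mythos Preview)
Your proof is correct and follows essentially the same route as the paper: both arguments arrive at the intrinsic formula $\zeta = \bigl(\int_Q \hhi\, d\nu\bigr)^{-1}$ by passing to the limit in \eqref{:79g}, invoke the subsequence-independence of $\hhi$ from \pref{l:79}, and then upgrade \eqref{:78b} to the full-sequence limit \eqref{:7Xa} via the standard ``every subsequence has a further subsequence'' principle. The only cosmetic difference is that the paper keeps a general test function $\phi$ and uses the monotone convergence theorem (exploiting that $\alpha_k^{\nu}\frac{d\nu_k}{d\nu}\nearrow 1$ from \eqref{:75a}), thereby also obtaining the identity $\int\phi\,d\mu = \zeta\int\phi\,\hhi\,d\nu$ needed later for \tref{l:7J}, whereas you set $\phi\equiv 1$ at the outset and use dominated convergence with the eventual bound $\frac{d\nu_k}{d\nu}\le (\alpha_k^{\nu})^{-1}\le 2$.
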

\begin{proof}
From \lref{l:74} and \eqref{:79g}, we see that 
\begin{align}\label{:7Xb}
\int \phi d\mu &= 
\limi{k}\int \phi d\mu_k 
= \limi{k} \frac{\zeta_k}{\alpha^{\nu}_k} 
\int \phi \hhi \big(\alpha^{\nu}_k\frac{d\nu _k}{d\nu } \big) d\nu 
.\end{align}
We see that 
$ \limi{k} {\zeta_k}/{\alpha^{\nu}_k}= \zeta $ from \eqref{:73a} and
\eqref{:78b} and that $\alpha^{\nu}_k\frac{d\nu _k}{d\nu } 
 \nearrow 1$ $\nu$-a.s. from \lref{l:75} with \eqref{:75a}. 
Hence, applying the monotone convergence theorem to the most 
right-hand side of \eqref{:7Xb}, 
we obtain that, for each non-negative $  \phi \in \Cb $, 
\begin{align}\label{:7Xc}
\int \phi d\mu = \zeta  \int \phi \hhi d\nu 
.\end{align}

From \pref{l:79}, we see that $\hh_{\infty}$ is 
independent of the choice of a subsequence $ \{\nn_p\}_{p\in\n} $ 
and uniquely determined $\nu$-a.s. 
Putting $\phi \equiv 1$ in \eqref{:7Xc} yields 
$1= \zeta  \int \hhi d\nu$. Since $\hhi$ was not identically zero from 
\pref{l:79}, we see that 
\[
 \zeta = \Big(\int \hhi d\nu \Big)^{-1} \in (0, \infty). 
\]
In particular,  $ \zeta $ is independent of the choice of a subsequence
$ \{\nn_p\}_{p\in\n} $. 

From \eqref{:78b} and the fact that $ \zeta $ is independent of 
$ \{\nn_p\}_{p\in\n} $, we conclude 
that $ \zeta $ is given by \eqref{:7Xa}. 
Indeed, from \eqref{:78b}, we deduce that, for any subsequence of $ \{\nn_i\}_{i\in\n}$, 
we can choose a further subsequence $ \{\nn_j\}_{j\in\n}$ such that 
$ \zeta = \lim_{j \to \infty} \frac{\ZZ(\nu^{\nn_j})}{\ZZ(\mu^{\nn_j})}$. 
This implies \eqref{:7Xa} because $ \zeta $ is independent of $ \{\nn_i\}_{i\in\n}$ and 
$ \{\nn_j\}_{j\in\n}$.  
\qed\end{proof}

The following theorem is the main result of this section. 
\begin{thm} 
\label{l:7J}
Under Assumptions \thetag{A1}--\thetag{A5}, 
$\mu $ is absolutely continuous with respect to $\nu $ with 
the Radon-Nikodym density 
\begin{align}\notag 
\frac{d\mu }{d\nu } = \zeta \hhi,  
\end{align}
where $\hh_{\infty}$ and $\zeta$ are given by 
\eqref{:79a} and \eqref{:7Xa}, respectively. 
\end{thm}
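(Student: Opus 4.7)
The plan is to observe that the hardest ingredients have essentially already been assembled in Lemma \ref{l:77}, Proposition \ref{l:79}, and Proposition \ref{l:7X}, and that the remaining task is a clean measure-theoretic bookkeeping step.

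First I would recall the equation obtained inside the proof of \pref{l:7X}, namely that
\begin{align}\notag
\intQ \phi \, d\mu \;=\; \zeta \intQ \phi \, \hhi \, d\nu
\end{align}
holds for every nonnegative $\phi \in \Cb$. This already gives everything up to extending the class of test functions. In particular, setting $\phi \equiv 1$ yields $\zeta \intQ \hhi \, d\nu = 1$, so that $\hhi \in L^1(\nu)$ is nonnegative (being an $L^1$-weak limit of the nonnegative functions $\hhr$ on each $\nu_k$, and hence $\nu$-a.e.\ nonnegative via \lref{l:74} and \lref{l:75}) and the measure $\tilde{\mu}(d\xis) := \zeta \hhi(\xis) \nu(d\xis)$ is a probability measure on $(Q,\mathcal{B}(Q))$.

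Next I would extend the identity $\intQ \phi \, d\mu = \intQ \phi \, d\tilde{\mu}$ from nonnegative $\phi \in \Cb$ to all $\phi \in \Cb$ by the usual decomposition $\phi = \phi^+ - \phi^-$ together with linearity (both sides are finite because $\hhi \in L^1(\nu)$). Since $Q$ is a Polish space under the vague topology, any two Borel probability measures that agree on $\Cb(Q)$ coincide; this is the standard fact that $\Cb(Q)$ is measure-determining on Polish spaces. Hence $\mu = \tilde{\mu}$, which means $\mu \prec \nu$ and
\begin{align}\notag
\frac{d\mu}{d\nu}(\xis) = \zeta \, \hhi(\xis) \quad \text{for $\nu$-a.s.\ $\xis$.}
\end{align}

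The uniqueness of the density, and therefore the well-posedness of the statement, uses \pref{l:79} (which establishes that $\hhi$ is independent of the subsequence $\{\nn_p\}$) and \pref{l:7X} (which establishes the same for $\zeta$ and identifies it as the limit $\lim_{\nn \to \infty} \ZZ(\nu^{\nn})/\ZZ(\mu^{\nn})$). There is really no further obstacle: the genuinely hard parts, namely identifying $\hhi$ as an $L^1$-weak limit independent of the approximating subsequence and controlling the ratio of partition functions through the truncation $\Hcal_k$, have been carried out in the preceding lemmas. The only thing to check carefully is that the passage from nonnegative $\Cb$-test functions to Borel sets uses only Polishness of $Q$ and finiteness of $\intQ \hhi \, d\nu$, both of which are already in hand.
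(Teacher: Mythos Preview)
Your proof is correct and follows essentially the same approach as the paper: both derive the result directly from the identity $\intQ \phi\, d\mu = \zeta \intQ \phi\, \hhi\, d\nu$ for nonnegative $\phi \in \Cb$ established in the proof of \pref{l:7X}. The paper's proof is a single sentence invoking that identity, while you spell out the routine measure-theoretic steps (nonnegativity of $\hhi$, extension to signed $\phi$, and the fact that $\Cb(Q)$ is measure-determining on a Polish space) that the paper leaves implicit.
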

\begin{proof}
From \eqref{:7Xc}, we conclude that $ d\mu / d\nu = \zeta \hhi $. 
\qed\end{proof}

\section{Absolute continuity of Palm measures of Ginibre point process}
\label{s:8}

We verify Assumptions in \sref{s:7} 
for two Palm measures 
of the Ginibre point process $\G$ on $R=\C$ with the Euclidean metric. 
We take $ o $ as the origin of $ \C $ and $ b_r = 2^r$. 
Hence $ S_r = \{ z \in \C ; |z| < 2^r \} $ by definition. 
Let $ \radon $ be the Lebesgue measure on $ \C $. 

In this section we regard $\G$ (similarly for $\G^{\nn}$) as the DPP
associated with  $\tilde{\K}(z,w) = \pi^{-1} e^{z \wbar - \frac{1}{2}(|z|^2 +
|w|^2)}$ and the Lebesgue measure $\radon$ (see
\eqref{:gauge}).  
Now we verify Assumptions by setting $\x, \y \in \C^{\mm}$ and 
$\mu = \G_{\x}$ and $\nu = \G_{\y}$. Here $ \G_{\x}$ and $ \G_{\y}$ are 
reduced Palm measures of $ \G $ 
conditioned at $ \x $ and $ \y $, respectively. 
For $ \nn > \mm $ we set $ \G_{\x}^{\nn}$ and $ \G_{\y}^{\nn}$ 
by the reduced Palm measures of $ \G^{\nn} $ 
conditioned at $ \x $ and $ \y $, respectively. 

From (\ref{joint}), we consider the following function as $ \hh $ 
on $Q_{\text{fin}} = \sqcup_{\nn=0}^{\infty} Q_{\nn}$ 
\begin{align}\label{:80p}&
\hh (\xis) 
= \frac{\prod_{i=1}^{\nn} 
|\x - s_i|^2 }{\prod_{i=1}^{\nn} |\y - s_i|^2}
= \prod_{j=1}^{\mm} 
\frac{\prod_{i=1}^{\nn} 
|1 - x_j/s_i|^2 }{\prod_{i=1}^{\nn} |1 - y_j/s_i|^2 }
 \quad \xis \in Q_{\nn}. 
\end{align}

\begin{lem} \label{l:80}
\thetag{A1}, \thetag{A2}, and \thetag{A4} are satisfied with 
$ \mu = \G_{\x}$ and $ \nu = \G_{\y}$, 
$ \mu ^{\nn} = \G_{\x}^{\nn}$ and $ \nu ^{\nn}=\G_{\y}^{\nn}$. 
Moreover, $\hh $ is taken as \eqref{:80p}. 
\end{lem}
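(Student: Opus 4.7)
The plan is to verify the three assumptions in turn, each of which reduces to a short and direct check.

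First I would dispose of \thetag{A4}, which is immediate from \eqref{:80p}: since $\hh(\xis)=\prod_{i=1}^{\nn}(|\x-s_i|^2/|\y-s_i|^2)$ factors as a product over the index $i$, splitting it according to whether $s_i\in S_r$ or $s_i\in S_r^c$ yields $\hh(\xis)=\hh_r(\xis)\,\hh_r^c(\xis)$ for every $\xis\in Q_{\text{fin}}$.

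For \thetag{A2} I would set
\[
f_{\mu}(\xis) = \prod_{i=1}^{\nn} |\x-s_i|^2 \cdot |\Delta(\s)|^2 \prod_{i=1}^{\nn} e^{-|s_i|^2}, \quad \xis=\sum_{i=1}^{\nn}\delta_{s_i}\in Q_{\nn},
\]
and analogously $f_{\nu}$. Each restriction $f_{\mu}^{\nn}$ is smooth and symmetric on $\C^{\nn}$, hence $f_{\mu}$ is continuous in the sense of \sref{s:7}. Starting from the joint density \eqref{joint} for $\G^{\mm+\nn}$ and using the definition of the reduced Palm measure of $\G^{\mm+\nn}$ at $\x$, one obtains the distribution of $\G_{\x}^{\mm+\nn}$ on $\C^{\nn}$; converting from the Gaussian to the Lebesgue reference measure gives $\ZZ(\G_{\x}^{\nn})^{-1}\pi^{-\nn} f_{\mu}(\s)\,\radon^{\otimes\nn}(d\s)$, which matches the form \eqref{:71a} after absorbing $\pi^{-\nn}$ into the normalization. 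Weak convergence $\G_{\x}^{\nn}\to\G_{\x}$ would then follow from convergence of the Palm kernels $\K^{\nn}_{\x}\to\K_{\x}$ uniformly on compacts together with Proposition~3.1 of \cite{ST}; the kernel convergence is transparent from the Schur-function representation in \lref{l:91} combined with the already known uniform convergence $\K^{\nn}\to\K$.

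For \thetag{A1} I would note that $\mathrm{Dcp}(\pi_r)$ in the vague topology is exactly the set of configurations with at least one point on $\partial S_r=\{|z|=2^r\}$. Since $\G_{\x}$ and $\G_{\y}$ are DPPs whose $1$-correlation measures are absolutely continuous with respect to Lebesgue measure and $\partial S_r$ is a Lebesgue null set, both measures assign $\mathrm{Dcp}(\pi_r)$ measure zero. Any further discontinuities of $\hh_r = \hh\circ\pi_r$ in the vague topology come from configurations whose intersection with $S_r$ contains one of the $y_j$ (where the denominator of $\hh$ vanishes); by the Palm-kernel formula \eqref{:31a}, the $1$-correlation function of $\G_{\y}$ vanishes at each $y_j$, so under $\G_{\y}$ such configurations form a null set, while under $\G_{\x}$ the finite set $\{y_j\}$ is Lebesgue null and the $1$-correlation of $\G_{\x}$ is absolutely continuous, so they are null there as well. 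The symmetric analysis at the $x_j$ is identical.

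The one step with any content is the uniform-on-compacts convergence $\K^{\nn}_{\x}\to\K_{\x}$ needed to feed Proposition~3.1 of \cite{ST}; with \lref{l:91} at hand this amounts to passing to the limit in an explicit Schur-function expression, and everything else is a routine null-set argument.
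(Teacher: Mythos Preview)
Your proposal is correct and follows the same approach as the paper, which compresses the entire argument into a single line: since $\nu$ has a locally bounded $1$-correlation function with respect to Lebesgue measure, \thetag{A1}, \thetag{A2}, and \thetag{A4} are clear by construction. Your detailed unpacking of each assumption is appropriate; the minor over-argument in \thetag{A1} (invoking the vanishing of the Palm $1$-correlation at the $y_j$ rather than just absolute continuity of the intensity measure) does no harm.
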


\begin{proof}
Since $ \nu $ has a locally bounded, 1-correlation function 
$\tilde{\rho}_1(\xis)$ with respect to the Lebesgue measure 
$ \radon $, \thetag{A1}, \thetag{A2} and \thetag{A4} are clear by construction. 
\qed\end{proof}

Let $ D_r = \{ z\in \mathbb{C}; |z| \le r \} $. 
Let $ \lceil z \rceil$ 
denote the minimal integer greater than or equal to $ z \in \mathbb{R}$. 
For fixed $\alpha \in \n$ we set 
\begin{align*} & 
u_{\alpha}(z) = \left(\frac{\lceil |z| \rceil}{z} \right)^{\alpha}, \quad 
v_{\alpha}(z) = \frac{1}{z^{\alpha}} 
,\\& 
F_{\alpha, r}(\xis) 
= \bra \xis, u_{\alpha} \ind_{D_r \setminus D_1}\ket
= \sum_{1 < |s_i| \le r} 
\left(\frac{\lceil |s_i| \rceil}{s_i}\right)^{\alpha} 
,\\& 
G_{\alpha, r,R}(\xis) 
= \bra \xis, v_{\alpha} \ind_{D_R \setminus D_r}\ket 
= \sum_{r < |s_i| \le R} \frac{1}{s_i^{\alpha}} , 
&& (\xis = \sum_{i} \delta _{s_i} )
.
\end{align*}

In the next two lemmas, 
we consider $\nu \in \p(Q)$ and 
a sequence $\{\nu^{\nn}\}_{\nn \in \n}$ of probability measures 
such that each $\nu^{\nn}$ is supported on $Q_{\nn}$
and $\{\nu^{\nn}\}_{\nn \in \n}$ is an $\radon$-approximation of $\nu$. 
Later, we will take $\nu^{\nn}$ to be the $ \nn $-particle approximation 
$\G_{\y}^{\nn}$ of 
Palm measures $\nu = \G_{\y}$ of the Ginibre point process. 

We introduce the notation  $ \nu^{\infty} := \nu $ to simplify the statement. 
\begin{lem}\label{l:81} 
Let $\alpha \in \n $. 
Assume that 
\begin{align}\label{:81a}&
\sup_{\nn \in \Ni } \|F_{\alpha, r}\|_{L^2(\nu^{\nn})} = O(r^c) 
\quad \text{ as $r \to \infty$ for some $c=c_{\alpha}<1$}.\end{align}
Then we obtain the following. 
\begin{align}\label{:81b}&
\sup_{\nn \in \Ni  \atop R>r} \|G_{\alpha, r,R} \|_{L^2(\nu^{\nn})} 
= O(r^{c-\alpha}).  
\end{align}
\end{lem}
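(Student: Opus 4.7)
The plan is to express $G_{\alpha, r, R}$ in terms of the sequence $\{F_{\alpha, k}\}_{k \in \n}$ by an Abel summation matched to the annular decomposition $A_k = \{z \in \C \,;\, k < |z| \le k+1\}$. The starting observation is the pointwise identity $v_\alpha(z) = u_\alpha(z)/\lceil |z|\rceil^{\alpha}$, so that on $A_k$ one has $1/z^\alpha = u_\alpha(z)/(k+1)^\alpha$. Assuming first that $r, R$ are integers with $r \ge 1$, this yields
\begin{equation*}
G_{\alpha, r, R}(\xis)
= \sum_{k=r}^{R-1} \frac{1}{(k+1)^\alpha}\bra \xis, u_\alpha \ind_{A_k}\ket
= \sum_{k=r}^{R-1} \frac{F_{\alpha, k+1}(\xis) - F_{\alpha, k}(\xis)}{(k+1)^\alpha}.
\end{equation*}

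Next I would summation-by-parts: setting $c_k = (k+1)^{-\alpha}$, the standard Abel identity gives
\begin{equation*}
G_{\alpha, r, R}
= \frac{F_{\alpha, R}}{R^\alpha} - \frac{F_{\alpha, r}}{(r+1)^\alpha}
+ \sum_{k=r+1}^{R-1} F_{\alpha, k}\left[\frac{1}{k^\alpha} - \frac{1}{(k+1)^\alpha}\right].
\end{equation*}
Applying the triangle inequality for $\|\cdot\|_{L^2(\nu^\nn)}$, using the hypothesis \eqref{:81a} in the form $\|F_{\alpha, k}\|_{L^2(\nu^\nn)} \le C k^c$ uniformly in $\nn \in \Ni$, and the mean-value estimate $k^{-\alpha} - (k+1)^{-\alpha} = O(k^{-\alpha - 1})$, the bound reduces to controlling the tail sum $\sum_{k > r} k^{c - \alpha - 1}$. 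Since $\alpha \in \n$ and $c < 1$, we have $c - \alpha - 1 < -1$, so this series converges and its tail is $O(r^{c - \alpha})$; the two boundary contributions are $R^{c - \alpha}$ and $r^{c - \alpha}$, both $O(r^{c - \alpha})$ because $R > r$ and the exponent is negative. Taking the supremum over $\nn$ and $R$ then yields \eqref{:81b}.

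For general real $r, R$ with $R > r$ I would reduce to the integer case by writing $G_{\alpha, r, R} = G_{\alpha, \lceil r \rceil, \lfloor R \rfloor}$ plus two partial-annulus remainders over $(r, \lceil r \rceil]$ and $(\lfloor R \rfloor, R]$; each remainder is again of the form $\lceil |z| \rceil^{-\alpha}\cdot u_\alpha$ summed over a set on which $|z| \asymp r$ or $|z| \asymp R$, so its $L^2$-norm is bounded by $r^{-\alpha}$ or $R^{-\alpha}$ times the increment of an $F_{\alpha, \cdot}$, which is again $O(r^{c-\alpha})$. I do not anticipate a serious obstacle here; the entire argument is essentially a weighted summation by parts, and the only delicate point is aligning the weights with $\ind_{A_k}$ so that the telescoping produces exactly the $F_{\alpha, k}$ that appear in \eqref{:81a}, and making sure the condition $c < 1$ is used in its sharp form $c - \alpha - 1 < -1$ so the resulting series is summable with the correct rate.
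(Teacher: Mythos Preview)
Your proof is correct and follows essentially the same approach as the paper: the paper also rewrites $G_{\alpha,r,R}$ (for integer $r,R$) as $\sum_{k=r+1}^{R} k^{-\alpha}(F_{\alpha,k}-F_{\alpha,k-1})$, performs the identical Abel summation to obtain the same three-term decomposition, and then takes the $L^2(\nu^{\nn})$-norm and invokes \eqref{:81a}. Your treatment of non-integer $r,R$ via partial-annulus remainders is an extra detail the paper omits (it restricts to $r,R\in\n$ in its proof), but this is harmless and correct.
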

\begin{proof}
For $\alpha \in \n$ and $r, R \in \n$ with $r < R$, 
by summation by parts, we have 
\begin{align*}
G_{\alpha, r,R}(\xis) 
&= \sum_{k=r+1}^R \frac{1}{k^{\alpha}} \sum_{k-1 < |s_i| \le k} 
\left(\frac{\lceil |s_i| \rceil}{s_i} \right)^{\alpha} \\
&= \sum_{k=r+1}^R \frac{1}{k^{\alpha}} 
(F_{\alpha, k} - F_{\alpha, k-1}) \\
&= \frac{F_{\alpha, R}}{R^{\alpha}} - \frac{F_{\alpha, r}}{(r+1)^{\alpha}} 
+ \sum_{k=r+1}^{R-1} \left(\frac{1}{k^{\alpha}} -
 \frac{1}{(k+1)^{\alpha}}\right) F_{\alpha, k}. 
\end{align*}
Hence, by taking the $ L^2(\nu^{\nn})$-norm and 
applying the assumption \eqref{:81a}, we obtain \eqref{:81b}. 
\qed\end{proof}

For $0 \le r < R < \infty $ and $ x \in \mathbb{C}$ we set 
\begin{align} \notag && 
\hht _{r, R} (\xis ) = \prod_{r < |s_i| \le R} | 1-\frac{x}{s_i} |^2 &&
\quad ( \xis = \sum_i \delta _{s_i} )
.\end{align}
By construction, we see that 
\begin{align}\label{:84y}&
\hhr (\xis ) = \frac{\prod_{j=1}^{\mm }\hhtt _{0,b_r}^{x_j }(\xis )}
{\prod_{j=1}^{\mm }\hhtt _{0,b_r}^{y_j }(\xis )}
.\end{align}

\begin{lem}\label{l:82} 
Assume that the $1$-correlation functions $ \tilde{\rho}_1^{\nn}$ of 
$\nu^{\nn}$ ($ \nn \in \Ni $) 
with respect to the Lebesgue measure on $\C$ satisfy 
\begin{align}\label{:82b}&
\sup_{\nn \in \Ni \atop{s \in \C}} \tilde{\rho}_1^{\nn}(s) < \infty 
.\end{align}
If there exists $c<1$ such that, for  $\alpha=1,2$, 
\begin{align}\label{:82a}&
\text{$\sup_{\nn \in \Ni } \|F_{\alpha, r}\|_{L^2(\nu^{\nn})} 
= O(r^c)$ as $r \to \infty$},  
\end{align}
then the following hold. \\
\thetag{1} For fixed $x \in \C$, 
\begin{align}\label{:82c}&
\sup_{\nn \in \Ni  \atop R>r} 
\|  \log\hht _{r,R}      
\|_{L^1(\nu^{\nn})} = O(r^{-(1-c)}) 
\quad \text{ as } 
r \to \infty
.\end{align}
\thetag{2} 
The series $ \log\hht _{0,r} $ 
converges in $ L^1(\nu ^{\nn})$ uniformly in $ \nn \in \Ni $ and 
uniformly in $x$ on any compact set in $\mathbb{C}$. 
Moreover, with $ b_p = 2^p$, 
\begin{align}\label{:84a}&
\lim_{k \to \infty} \inf_{\nn \in \Ni } 
\nu^{\nn} \Big( 
\sup_{p \in \n} |\log\hht _{0, b_{p}}| \le k\Big)  = 1. 
\end{align}
\end{lem}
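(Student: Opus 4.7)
The plan is to reduce everything to the one-variable sums $G_{\alpha,r,R}$ via the Taylor expansion of the logarithm. For $|s|>|x|$ one has $\log(1-x/s)=-\sum_{\alpha\ge 1}x^{\alpha}/(\alpha s^{\alpha})$, and hence
\[
\log|1-x/s|^{2}=-2\,\Re\sum_{\alpha=1}^{\infty}\frac{x^{\alpha}}{\alpha s^{\alpha}}.
\]
Summing over $r<|s_{i}|\le R$ (legal termwise once $r>2|x|$, since then $|x/s_{i}|<1/2$), I obtain
\[
\log\hht_{r,R}(\xis)=-2\,\Re\sum_{\alpha=1}^{\infty}\frac{x^{\alpha}}{\alpha}\,G_{\alpha,r,R}(\xis),
\]
so part \thetag{1} reduces to controlling $\|G_{\alpha,r,R}\|_{L^{1}(\nu^{\nn})}$ uniformly in $\nn\in\Ni$ and $R>r$.

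For $\alpha=1,2$, the Cauchy--Schwarz inequality together with \lref{l:81} applied to \eqref{:82a} yields $\|G_{\alpha,r,R}\|_{L^{1}}\le\|G_{\alpha,r,R}\|_{L^{2}}=O(r^{c-\alpha})$; the $\alpha=1$ term produces the claimed $O(|x|\,r^{c-1})=O(r^{-(1-c)})$, and $\alpha=2$ contributes a strictly subdominant $O(r^{-(2-c)})$. For $\alpha\ge 3$, I drop the oscillation and use only the uniform $1$-correlation bound \eqref{:82b}:
\[
\|G_{\alpha,r,R}\|_{L^{1}(\nu^{\nn})}\le\int_{r<|s|\le R}|s|^{-\alpha}\,\tilde{\rho}_{1}^{\nn}(s)\,ds=O(r^{2-\alpha}).
\]
Summing gives $\sum_{\alpha\ge 3}(|x|^{\alpha}/\alpha)\,O(r^{2-\alpha})=O(|x|^{3}/r)=O(r^{-1})$ for $r>2|x|$, which is dominated by $r^{-(1-c)}$ since $c<1$. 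This establishes \eqref{:82c}, with an implicit constant that is locally uniform in $x$ by the analyticity of the series in $x$.

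For part \thetag{2}, the $L^{1}$-estimate just obtained shows that $\{\log\hht_{0,r}\}_{r}$ is Cauchy in $L^{1}(\nu^{\nn})$ uniformly in $\nn\in\Ni$ and locally uniformly in $x\in\C$, which gives the convergence assertion. To prove \eqref{:84a}, I choose $p_{0}$ with $b_{p_{0}}>2|x|$ and telescope: for $p\ge p_{0}$,
\[
\log\hht_{0,b_{p}}=\log\hht_{0,b_{p_{0}}}+\sum_{q=p_{0}+1}^{p}\log\hht_{b_{q-1},b_{q}},
\]
so that
\[
\sup_{p\in\n}|\log\hht_{0,b_{p}}|\le\sum_{p=1}^{p_{0}}|\log\hht_{0,b_{p}}|+\sum_{q=p_{0}+1}^{\infty}|\log\hht_{b_{q-1},b_{q}}|.
\]
By part \thetag{1}, $\|\log\hht_{b_{q-1},b_{q}}\|_{L^{1}(\nu^{\nn})}=O(2^{-(q-1)(1-c)})$, which is summable; the finitely many ``bounded-annulus'' terms $\|\log\hht_{0,b_{p}}\|_{L^{1}}$ for $p\le p_{0}$ are controlled directly via \eqref{:82b}, since $|\log|1-x/s||$ is locally $L^{1}(\radon)$ with only logarithmic singularities at $s=0$ and $s=x$. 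Consequently $\sup_{\nn\in\Ni}\|\sup_{p}|\log\hht_{0,b_{p}}|\|_{L^{1}(\nu^{\nn})}<\infty$, and Markov's inequality applied to $\sup_{p}|\log\hht_{0,b_{p}}|$ delivers \eqref{:84a}.

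The main obstacle is the tail $\alpha\ge 3$ of the Taylor expansion: for these $\alpha$ the oscillation hypothesis \eqref{:82a} is not assumed, so I must fall back on the crude bound \eqref{:82b}, which only yields $r^{-1}$ rather than $r^{-(1-c)}$. This is barely enough precisely because the hypothesis $c<1$ makes $r^{-(1-c)}$ larger than $r^{-1}$. A secondary point to check is the interchange of the Taylor sum with the point-process sum, which is justified on the event $\min_{i}|s_{i}|>2|x|$ by absolute convergence, so no further integrability is needed beyond \eqref{:82b}.
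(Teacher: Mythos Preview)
Your proof is correct and follows essentially the same approach as the paper: Taylor-expand $\log(1-x/s)$, control the $\alpha=1,2$ terms via \lref{l:81} and the tail $\alpha\ge 3$ via the $1$-correlation bound \eqref{:82b}, then exploit the geometric summability coming from $b_{p}=2^{p}$ together with Markov's inequality. The only cosmetic differences are that the paper lumps the tail into a single Taylor-remainder bound $c_{1}\sum|x/s_{i}|^{3}$ rather than summing termwise, and for \eqref{:84a} it telescopes into tail pieces $\log\hht_{b_{p},\infty}$ (writing $\log\hht_{0,b_{p}}=\log\hht_{0,\infty}-\log\hht_{b_{p},\infty}$) instead of your increment pieces $\log\hht_{b_{q-1},b_{q}}$.
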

\begin{proof}
For sufficiently large $r$ ($r \ge |x|+1$, say), we have 
\begin{align}& \label{:82f}
\left|\sum_{r < |s_i| \le R} \log (1-\frac{x}{s_i})
- \sum_{r < |s_i| \le R} \frac{x}{s_i}
- \frac{1}{2} \sum_{r < |s_i| \le R} \left(\frac{x}{s_i}\right)^2
\right| \le c_1
\sum_{r < |s_i| \le R} \left|\frac{x}{s_i}\right|^3 
.\end{align}
For the right-hand side, we deduce from \eqref{:82b} that 
\begin{align}\label{:82g}&
\sup_{\nn \in \Ni  \atop R>r} 
E^{\nu^{\nn}}[\sum_{r < |s_i| \le R} |\frac{x}{s_i} |^3 ]
\le 
\int_{D_r^c} \frac{|x|^3}{|s|^3} 
\{ \sup_{\Ni \atop{s \in \C}} \tilde{\rho}_1^{\nn}(s) \}
 ds = O(r^{-1}) 
.\end{align}
Hence from \eqref{:82f}, \eqref{:82g}, and \lref{l:81}, we have 
\begin{align}\notag 
\sup_{\nn \in \Ni  \atop R>r}
\| \log\hht _{r,R} \|_{L^1(\nu^{\nn})} 
& \le 
\max\big\{|x|, \frac{|x|^2}{2}\big\} 
\sum_{\alpha=1}^2 \sup_{\nn \in \Ni  \atop R>r} 
\|G_{\alpha, r,R}\|_{L^2(\nu^{\nn})} 
+ O(r^{-1}) 
\\ \notag 
& = O(r^{-(1-c)}). 
\end{align}
This completes the proof of \eqref{:82c}. 

Since 
\[
|\log\hht_{0,r}| = \Big|
\sum_{|s_i| \le r} \log |1 - \frac{x}{s_i}|^2 
\Big| 
\le 2 \left\bra 
\xis, 
\Big| \log |1 - \frac{x}{s}| \Big| \ind_{D_r}(s)
\right\ket, 
\]
we see from \eqref{:82b} that, for any $r>0$,  
\begin{align*}
\sup_{\nn \in \Ni} \|\log\hht_{0,r}\|_{L^1(\nu^{\nn})} 
&\le 2 \int_{D_r} \big| \log |1 - \frac{x}{s}| \big| 
\sup_{\nn \in \Ni} \tilde{\rho}_1^{\nn}(s) ds \\
&\le 2C \int_{D_r} 
\{\big| \log |s - x| \big| 
+ \big| \log |s| \big|\} 
ds < \infty. 
\end{align*}
Therefore, we deduce from \eqref{:82c} and the above that 
the series $ \{\log\hht _{0,r} \}_{r\in \mathbb{N}} $ 
is a Cauchy sequence in $ L^1(\nu ^{\nn})$ uniformly in $ \nn \in \Ni $. 
This yields the first claim of \thetag{2}. 

By a direct calculation we obtain that 
\begin{align}\label{:84b} 
 \nu^{\nn}&(\{ \sup_{p \in \n} |\log\hht _{0, b_{p}}| > k \})
=  \nu^{\nn}
\big(\bigcup_{p \in \n} 
\{|\log\hht _{0, b_{p}}| > k\} \big) 
\\ \notag & = 
  \nu^{\nn}
\big(\bigcup_{p \in \n} 
\{|\log\hht _{0, \infty } - \log\hht _{b_{p},\infty}| > k\} \big) 
\\ \notag &\le  
\nu^{\nn}
\big(
\{|\log\hht _{0, \infty } | > k/2\} \big) 
+
\nu^{\nn}\big(\bigcup_{p \in \n} 
\{|\log\hht _{b_{p},\infty}| > k/2 \} \big) 
\\ \notag & = 
\sum_{p=0}^{\infty} 
\nu^{\nn}(\{ |\log\hht _{b_{p},\infty }|> k/2 \}) 
 \quad \quad (\text{set $b_{0}=0$}) 
\\ \notag &\le 
 \frac{2}{k} \sum_{p=0}^{\infty} 
\|\log\hht _{b_{p},\infty}\|_{L^1(\nu^{\nn})} 
.\end{align}
We deduce from $b_p = 2^p$ and \eqref{:82c} that 
\begin{align}\label{:84c}&
\sup_{\nn \in \Ni } 
\sum_{p \in \n} 
\|\log\hht _{b_{p},\infty}\|_{L^1(\nu^{\nn})} 
< \infty 
.\end{align}
Therefore, we obtain from \eqref{:84b} and 
\eqref{:84c} that 
\begin{align}\notag 
\sup_{\nn \in \Ni }  \nu^{\nn}(\sup_{p \in \n} 
|\log\hht _{0, b_{p}}| > k)
= O(1/k) 
.\end{align}
\eqref{:84a} is immediate from this. 
We thus complete the proof of \lref{l:82}. 
\qed\end{proof}

\begin{cor}\label{cor:log}
Assumption \thetag{A3} holds for $\{\nu^{\nn}\}_{\nn \in
 \n \cup \{\infty\}}$ 
under the assumptions \eqref{:82b} and \eqref{:82a}. 
\end{cor}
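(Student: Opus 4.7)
The plan is to derive Corollary~\ref{cor:log} as a direct union bound on top of the uniform tightness estimate \eqref{:84a} already established in Lemma~\ref{l:82}(2). The first step is to rewrite $\Hcal_k$ in log-additive form. By \eqref{:71g}, once we fix $a_k > 1$ we have $\Hcal_k = \{\sup_{r \in \n}|\log\hh_r| \le \log a_k\}$, and by \eqref{:84y} the triangle inequality gives
\begin{equation*}
\sup_{r\in\n}|\log\hh_r(\xis)| \le \sum_{j=1}^{\mm}\sup_{p\in\n}\bigl|\log\hhtt_{0,b_p}^{x_j}(\xis)\bigr| + \sum_{j=1}^{\mm}\sup_{p\in\n}\bigl|\log\hhtt_{0,b_p}^{y_j}(\xis)\bigr|.
\end{equation*}
Thus controlling $\Hcal_k$ uniformly in $\nn$ reduces to controlling the $2\mm$ individual suprema associated with the fixed conditioning points $x_j$ and $y_j$.

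Next, for each fixed $z \in \{x_1,\dots,x_\mm,y_1,\dots,y_\mm\}$, I would invoke \eqref{:84a} to obtain
\begin{equation*}
\lim_{M\to\infty}\inf_{\nn\in\Ni}\nu^{\nn}\bigl(\sup_{p\in\n}|\log\hhtt_{0,b_p}^{z}| \le M\bigr) = 1.
\end{equation*}
A standard union bound across the $2\mm$ events then yields
\begin{equation*}
\lim_{M\to\infty}\inf_{\nn\in\Ni}\nu^{\nn}\Bigl(\sup_{r\in\n}|\log\hh_r| \le 2\mm M\Bigr) = 1.
\end{equation*}
Choosing $a_k$ large enough (for instance $a_k := \exp(2\mm M_k)$ with $M_k$ selected so that the infimum above exceeds $1 - 1/k$) then gives $\liminf_{\nn\to\infty}\nu^{\nn}(\Hcal_k) \ge 1 - 1/k$, and letting $k \to \infty$ finishes the $\nu^{\nn}$ half of \thetag{A3}. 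The $\mu^{\nn}$ half follows from a symmetric application: swapping $\x \leftrightarrow \y$ sends $\hh \mapsto 1/\hh$, which preserves the event $\Hcal_k$, and under the same hypotheses \eqref{:82b}--\eqref{:82a} applied to $\mu^{\nn}$ the same argument controls $\mu^{\nn}(\Hcal_k)$ as well. The sequence $\{a_k\}$ can be taken common to both by passing to the pointwise maximum.

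There is essentially no main obstacle in the present corollary: all the substantive analytic input---the polynomial tail estimates on $F_{\alpha, r}$ in \eqref{:82a}, the Taylor expansion of $\log(1-x/s)$ producing the $O(r^{-(1-c)})$ decay in \eqref{:82c}, and the crucial dyadic telescoping with $b_p = 2^p$ yielding \eqref{:84a}---has already been carried out in the preceding Lemma~\ref{l:82}. What remains is the bookkeeping sketched above: reduce from a product over the $2\mm$ conditioning points to a sum of logarithms via \eqref{:84y}, union-bound across those points, and pick $a_k$ to diverge at the appropriate rate.
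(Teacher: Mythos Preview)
Your proposal is correct and follows essentially the same approach as the paper. The paper's proof is just the two-line version of what you wrote: it takes $a_k = e^{2\mm k}$ and says that \thetag{A3} follows from \eqref{:84a} and \eqref{:84y}, leaving the union bound over the $2\mm$ conditioning points and the triangle inequality on $\log\hh_r$ implicit.
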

\begin{proof}
Take $ a_k = e^{2\mm k}$. 
Then \thetag{A3} follows from \eqref{:84a} and \eqref{:84y}. 
\qed\end{proof}

We now proceed with the proof of \thetag{A3} and
\thetag{A5} for the Ginibre case. 
As before, we set $\x, \y \in \C^{\mm}$ and $\mu = \G_{\x}$ and $\nu =
\G_{\y}$. 
We also set $  \G^{\infty} = \G $,  $\G_{\x}^{\infty} =  \G_{\x} $, 
and $\G_{\y}^{\infty} =  \G_{\y}$.

We verify \thetag{A3}  by checking the assumptions
\eqref{:82b} and \eqref{:82a} in \corref{cor:log}.  

\begin{lem}\label{l:83}
For each $\x \in \C^{\mm}$, we have 
\begin{align}\notag 
\text{
$\sup_{\nn \in \Ni } \|F_{\alpha, r}\|_{L^2(\G_{\x}^{\nn} )} = O(r^{1/2})$ 
as $r \to \infty$. }
\end{align}
\end{lem}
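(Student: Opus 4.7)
The plan is to write $F_{\alpha,r}(\xis) = \bra \xis, g_{\alpha,r}\ket$ with
$g_{\alpha,r}(z) = h_r(|z|) (|z|/z)^\alpha$, where
$h_r(t) := (\lceil t \rceil / t)^\alpha \ind_{(1,r]}(t)$ satisfies $\|h_r\|_\infty \le 2^\alpha$;
this puts $g_{\alpha,r}$ in the form required by \lref{var-monotone} (with $p = \alpha$)
and gives $\|g_{\alpha,r}\|_{L^\infty(\C)} \le 2^\alpha$.
I then decompose $\|F_{\alpha,r}\|_{L^2(\G_\x^\nn)}^2
= \var^{\G_\x^\nn}(F_{\alpha,r}) + |\E^{\G_\x^\nn}[F_{\alpha,r}]|^2$
and estimate each piece uniformly in $\nn \in \Ni$.

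For the variance, I first apply \lref{var-monotone} together with \lref{close2palm}
to transfer from the Palm measure $\G_\x^\nn$ to the Ginibre $\G$:
there exists $C_1(\alpha, \x) > 0$ such that
\[
\sup_{\nn \in \Ni, \, \nn > \mm} \var^{\G_\x^\nn}(F_{\alpha,r})
\;\le\; \var^\G(F_{\alpha,r}) + C_1(\alpha, \x).
\]
Since $\K$ has the reproducing property with respect to $\gsf$, \lref{l:35} applied to $\G$ gives
\[
\var^\G(F_{\alpha,r}) = \frac{1}{2\pi^2} \int_{\C^2}
|g_{\alpha,r}(z) - g_{\alpha,r}(w)|^2 \, e^{-|z-w|^2} \, dz\, dw.
\]
After the change of variable $u = z-w$, the inner integral
$I(u) := \int_\C |g_{\alpha,r}(w+u) - g_{\alpha,r}(w)|^2 dw$
splits into three contributions:
the boundary strip of width $|u|$ along $\partial D_r \cup \partial D_1$,
on which $g_{\alpha,r}$ jumps from a quantity of modulus $\le 2^\alpha$ to $0$, contributing $O(r|u|)$;
the crossings of the interior integer circles $\partial D_k$, $2 \le k \le r$,
where the jump amplitude of $(\lceil|z|\rceil/z)^\alpha$ is only $O(1/k)$, giving a total squared contribution $\sum_{k \le r} k \cdot |u| \cdot O(1/k^2) = O(|u| \log r)$;
and the smooth variation within each annulus, bounded by $C_\alpha |u|^2/|w|^2$
and integrating to $O(|u|^2 \log r)$.
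Combined with the trivial bound $I(u) \le C \cdot 4^\alpha r^2$ for large $|u|$,
integration against $e^{-|u|^2}$ then yields
$\var^\G(F_{\alpha,r}) = O(r)$.

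For the mean, the $1$-point intensity $\tilde\rho_1^\nn(z)$ of $\G^\nn$
with respect to the Lebesgue measure is rotation invariant for every $\nn \in \Ni$,
and $g_{\alpha,r}(z) = h_r(|z|) e^{-i\alpha\arg z}$ has vanishing angular average for $\alpha \in \n$,
so $\E^{\G^\nn}[F_{\alpha,r}] = 0$. Consequently
\[
\E^{\G_\x^\nn}[F_{\alpha,r}]
= \int_\C g_{\alpha,r}(z)\,\bigl(\tilde\rho_{1,\x}^\nn(z) - \tilde\rho_1^\nn(z)\bigr)\, dz,
\]
and \rref{rem:1cor} controls $|\tilde\rho_{1,\x}^\nn - \tilde\rho_1^\nn|$
by a Gaussian envelope independent of $\nn$ that is integrable on $\C$, giving
$\sup_{\nn \in \Ni,\, \nn > \mm} |\E^{\G_\x^\nn}[F_{\alpha,r}]| = O(1)$.
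Combining, $\sup_{\nn \in \Ni} \|F_{\alpha,r}\|_{L^2(\G_\x^\nn)}^2 = O(r)+O(1) = O(r)$,
which is the claim. The main obstacle is the $O(r)$ variance bound for $\G$:
the step discontinuities of $\lceil|z|\rceil$ and the hard cutoff at $|z|=r$ preclude a direct application of the $H^1$-type estimate in \lref{l:66}, and one must carefully exploit both the smallness of the interior jumps (amplitude $O(1/k)$ at $|z|=k$) and the exponential localization supplied by the kernel $e^{-|z-w|^2}$ to prevent an $O(r^2)$ loss.
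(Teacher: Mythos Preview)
Your overall strategy matches the paper's exactly: split $\|F_{\alpha,r}\|_{L^2}^2$ into variance plus squared mean, handle the mean via the rotational invariance of $\G^{\nn}$ together with the Gaussian envelope of \rref{rem:1cor}, and reduce the variance to $\var^{\G}(F_{\alpha,r})$ via \lref{var-monotone} and \lref{close2palm}. The paper then simply quotes \cite[Theorem~1.2]{OS} for $\var^{\G}(F_{\alpha,r}) = O(r)$, whereas you attempt a self-contained proof of that bound.

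Your direct variance computation has a gap. The three-term estimate $I(u) = O(r|u|) + O(|u|\log r) + O(|u|^2\log r)$ relies on the strips of width $|u|$ around the interior circles $\partial D_k$ being disjoint, so it is only justified for, say, $|u|\le 1/2$. Above that \emph{fixed} threshold you invoke only the trivial bound $I(u) \le C\,4^\alpha r^2$, but $\int_{|u|>1/2} e^{-|u|^2}\,r^2\,du$ is of exact order $r^2$, not $O(r)$, so the argument as written loses a factor of $r$. One clean repair: write $g_{\alpha,r}(z) = e^{-i\alpha\arg z}\ind_{1<|z|\le r} + \varepsilon(z)$ with $|\varepsilon(z)| \le C_\alpha/|z|$ on the annulus; the error term has $L^2$-norm $O(\sqrt{\log r})$ and hence contributes only $O(\log r)$ to the variance, while for the principal part one has $|e^{-i\alpha\arg(w+u)} - e^{-i\alpha\arg w}| \le C_\alpha|u|/|w|$ whenever $|w|>2|u|$. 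This yields $I(u) = O(r|u|) + O\bigl((1+|u|^2)\log r\bigr)$ valid for \emph{all} $|u|\le r/2$, which integrates against $e^{-|u|^2}$ to $O(r)$; the region $|u|>r/2$ then contributes $O(r^2 e^{-r^2/4}) = o(1)$ via the trivial bound. Alternatively, keep your decomposition but take the threshold at $|u|=\sqrt{2\log r}$ and check that your strip estimates extend to that range.
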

\begin{proof} 
Notice that $\|F_{\alpha, r}\|_{L^2(\G_{\x}^{\nn} )} 
\le \{\var^{\G_{\x}^{\nn}}(F_{\alpha, r})\}^{1/2} +
 |\E^{\G_{\x}^{\nn}}[F_{\alpha,r}]|$. 
Since $\E^{\G^{\nn}}[F_{\alpha,r}] = 0$ by rotational invariance of
 $\G^{\nn}$,  
we can show that $|\E^{\G_{\x}^{\nn}}[F_{\alpha,r}]|=O(1)$ 
by using \eqref{:1cor}. 
Hence it suffices to show that 
$\sup_{\nn \in \n \cup \{\infty\}} \var^{\G_{\x}^{\nn}}(F_{\alpha,
 r}) =O(r)$. 

In \lref{var-monotone}, 
if we put $h(t) = \ind_{(1,r]}(t)(\frac{\lceil t \rceil}{t})^{\alpha}$ 
for $\alpha \in \n$, then $g_{\alpha} = F_{\alpha,r}$. 
Since $F_{\alpha, r}$ is uniformly bounded in $r$, 
it follows from \lref{var-monotone} and \lref{close2palm} that 
there exists a positive constant $c = c_{\x, \alpha}>0$ independent of
 $r$ such that 
\begin{equation}
\sup_{\nn \in \n \cup \{\infty\}} \var^{\G_{\x}^{\nn}}(F_{\alpha, r}) 
\le  \var^{\G}(F_{\alpha, r}) + c = O(r). 
\notag
\end{equation}
The last equality is a result from \cite[Theorem 1.2]{OS}. 
\qed\end{proof}

Since the $1$-correlation functions for 
$\{\G^{\nn}\}_{\nn \in \Ni}$ are uniformly 
bounded, so are 
those for $\{\G^{\nn}_{\x}\}_{\nn \in \Ni }$ 
by \eqref{:1cor} in \rref{rem:1cor}. 
Then \eqref{:82b} follows.  
Also \eqref{:82a} follows from \lref{l:83}. 
Therefore, \thetag{A3} holds for $\{\G_{\x}^{\nn}\}_{\nn
 \in \n \cup \{\infty\}}$ from \corref{cor:log}.  

We next verify \thetag{A5} for $\G_{\x, k}^{\nn} = 
\G_{\x}^{\nn}( \cdot | \Hcal_k)$ as in \eqref{:71h}. 

\begin{lem} \label{l:86}
For each $\x \in \C^{\mm}$, we have the following.  
\begin{align}\notag 
\sup_{\nn \in \Ni } 
\int_Q |\hh _r ^c -1 | d\G_{\x, k}^{\nn} = o(1) 
\quad \text{ as } r \to \infty 
.\end{align}
\end{lem}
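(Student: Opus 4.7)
The plan is to pass to logarithms: first show that $\log \hh_r^c$ is small in $L^1(\G_{\x}^{\nn})$ uniformly in $\nn$ by applying \lref{l:82} to each factor of the tail product, and then use the definition of $\Hcal_k$ to convert this into the desired $L^1$-smallness of $\hh_r^c - 1$ under $\G_{\x,k}^{\nn}$.

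Writing $\hh_r^c(\xis) = \hh(\xis)/\hh_r(\xis)$ (valid on $Q_{\text{fin}}$ by \thetag{A4}) and using \eqref{:80p} together with the factorization behind \eqref{:84y}, one obtains
\[
\hh_r^c(\xis) = \prod_{j=1}^{\mm} \frac{\hhtt_{b_r,\infty}^{x_j}(\xis)}{\hhtt_{b_r,\infty}^{y_j}(\xis)},
\]
hence $|\log \hh_r^c| \le \sum_{j} \bigl(|\log \hhtt_{b_r,\infty}^{x_j}| + |\log \hhtt_{b_r,\infty}^{y_j}|\bigr)$. \lref{l:82} applies to $\{\G_{\x}^{\nn}\}_{\nn \in \Ni}$ since hypothesis \eqref{:82b} holds by \rref{rem:1cor} and hypothesis \eqref{:82a} holds with $c = 1/2$ by \lref{l:83}. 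Then \eqref{:82c} (letting $R\to\infty$, and with radius $b_r = 2^r$) yields
\[
\sup_{\nn \in \Ni} \|\log \hhtt_{b_r,\infty}^{x_j}\|_{L^1(\G_{\x}^{\nn})} = O(2^{-r/2}),
\]
and the analogous bound with $y_j$. Summing over $j$ then gives $\sup_{\nn \in \Ni} \|\log \hh_r^c\|_{L^1(\G_{\x}^{\nn})} = O(2^{-r/2})$.

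For the conversion step, I would exploit the pointwise boundedness built into $\Hcal_k$. For any $\xis \in Q_{\text{fin}}$ one has $\hh_r(\xis) \to \hh(\xis)$ as $r \to \infty$ (since eventually $\pi_r(\xis) = \xis$), so the defining inequality of $\Hcal_k$ propagates to $\hh$: on $\Hcal_k \cap Q_{\text{fin}}$ we have $a_k^{-1} \le \hh \le a_k$ and therefore $a_k^{-2} \le \hh_r^c \le a_k^2$, \emph{uniformly in $r$}. The elementary inequality $|e^t-1| \le e^{|t|}|t|$ then yields $|\hh_r^c - 1| \le a_k^2 \, |\log \hh_r^c|$ on $\Hcal_k$, so that
\[
\int_Q |\hh_r^c - 1|\, d\G_{\x,k}^{\nn} \le \frac{a_k^2}{\G_{\x}^{\nn}(\Hcal_k)}\|\log \hh_r^c\|_{L^1(\G_{\x}^{\nn})} = O(2^{-r/2})
\]
uniformly in $\nn$, using that $\inf_{\nn \in \Ni} \G_{\x}^{\nn}(\Hcal_k)>0$ (which for sufficiently large $k$ follows from \eqref{:84a} with $a_k = e^{2\mm k}$, and for the remaining $k$ is assumed positive after \thetag{A3}).

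The key technical obstacle is the uniform $L^\infty$-control of $\hh_r^c$ on $\Hcal_k$ independent of $r$: this is what legitimizes replacing the $L^1$-norm of $\hh_r^c - 1$ by that of $\log \hh_r^c$ via the Taylor-type bound $|e^t - 1| \lesssim |t|$ (which is only useful when $t$ stays bounded). The set $\Hcal_k$ is defined precisely so as to furnish this bound, so once the identification $\hh_r^c = \hh/\hh_r$ is recognized the remaining estimates are quantitative applications of \lref{l:82}.
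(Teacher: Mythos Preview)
Your proof is correct and follows essentially the same route as the paper's: both arguments obtain the two-sided bound $a_k^{-2}\le \hh_r^c\le a_k^2$ on $\Hcal_k\cap Q_{\text{fin}}$ from $\hh_r^c=\hh/\hh_r$, use an elementary inequality (the paper writes it as $|t-1|\le (t\vee 1)|\log t|$) to reduce to bounding $\|\log\hh_r^c\|_{L^1}$, and then invoke \lref{l:82}\,(1) on each factor $H_{b_r,\infty}^{x_j}$, $H_{b_r,\infty}^{y_j}$ together with $\G_{\x,k}^{\nn}\le \G_{\x}^{\nn}(\Hcal_k)^{-1}\G_{\x}^{\nn}$. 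The only cosmetic difference is that the paper phrases the passage $R\to\infty$ via $\liminf$ and Fatou's lemma, which you might mention explicitly.
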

\begin{proof}
Since $ \hh _r^c(\xis) = \lim_{s \to \infty} {\hh _s(\xis)}/{\hh _r(\xis)}$ 
for $\xis \in Q_{\text{fin}}$, we have 
\begin{align} \label{:86b}&
 a_k^{-2} \le \inf_{r \in \n}\hh _r^c(\xis)
\le \sup_{r \in \n}\hh _r^c(\xis) \le a_k^2 \quad \text{$\G_{\x, k}^{\nn} $-a.s.}
\end{align}
and, from \eqref{:84y}, 
\begin{equation}
 |\log \hh _r^c | \le 
\liminf_{s \to \infty} 
\sum_{j=1}^{\mm} 
\{| \log H_{b_r, b_s}^{x_j} | + | \log H_{b_r, b_s}^{y_j} |\}. 
\label{loghrc}
\end{equation}
Since $|t-1| \le (t \vee 1) |\log t|$ for $t > 0$ and 
\eqref{:86b}, we see that 
\begin{align}\label{:86c}&
\int_Q |\hh _r ^c -1| d\G_{\x, k}^{\nn}  \le \max\{a_k^2, 1\} 
\int_Q |\log\hh _r^c|  d\G_{\x, k}^{\nn}  
.\end{align}
In view of \lref{l:83}, 
applying \lref{l:82} \thetag{1} to \eqref{loghrc} 
and Fatou's lemma yield 
\begin{align}\label{:86d}&
\sup_{\nn \in \Ni } \int_Q |\log\hh _r^c|  d\G_{\x}^{\nn} 
= O(b_r^{-1/2})
.\end{align}
Since $\G_{\x,k}^{\nn} \le \G_{\x}^{\nn}(\Hcal_k)^{-1} \G_{\x}^{\nn} $, 
we obtain the assertion from \eqref{:86c} and \eqref{:86d}.  
\qed\end{proof}

The main theorem of this section is as follows. 
\begin{thm}	\label{l:87}
For each $\x ,\y \in \C^{\mm}$, $ \G_{\x }$ 
is absolutely continuous with respect to $ \G_{\y }$. 
The Radon-Nikodym density ${d\G_{\x }}/{d\G_{\y }}$ is given by 
\eqref{:21c} and \eqref{:21g}. 
\end{thm}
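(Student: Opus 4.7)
The plan is to apply the abstract Theorem~\ref{l:7J} to the pair $(\mu,\nu)=(\G_\x,\G_\y)$ with approximating sequences $(\mu^\nn,\nu^\nn)=(\G_\x^\nn,\G_\y^\nn)$, reference measure $\radon$ the Lebesgue measure on $\C$, distinguished point $o=0$, radii $b_r=2^r$, and the function $\hh$ given by \eqref{:80p}. Once the five hypotheses \thetag{A1}--\thetag{A5} of Section~\ref{s:7} are verified in this concrete setting, the abstract machinery will output absolute continuity together with the factorization $d\G_\x/d\G_\y=\zeta\,\hhi$, and it only remains to identify $\hhi$ with the pointwise product in \eqref{:21c} and $\zeta$ with $\ZZ_{\x\y}^{-1}$.

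Verifying the hypotheses proceeds as follows. \thetag{A1}, \thetag{A2} and \thetag{A4} are exactly Lemma~\ref{l:80}, reflecting that the $\nn$-particle Palm measures are finite point processes with one-body product densities. \thetag{A3} reduces via Corollary~\ref{cor:log} to two ingredients: (a) a uniform-in-$\nn$ bound on the $1$-correlation functions of $\G_\x^\nn$, which follows from Proposition~\ref{ginibre} together with the estimate \eqref{:1cor} of Remark~\ref{rem:1cor}; and (b) the control $\sup_\nn \|F_{\alpha,r}\|_{L^2(\G_\x^\nn)}=O(r^{1/2})$ for $\alpha=1,2$, which is Lemma~\ref{l:83}. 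The latter in turn rests on the variance identity Lemma~\ref{var-monotone}, the Palm-comparison estimate Lemma~\ref{close2palm}, and the $O(r)$ variance asymptotic \eqref{:11h} for $\G$. Finally, \thetag{A5} is Lemma~\ref{l:86}. With \thetag{A1}--\thetag{A5} in hand, Theorem~\ref{l:7J} together with Proposition~\ref{l:7X} yields
\[
\frac{d\G_\x}{d\G_\y}=\zeta\,\hhi, \qquad \zeta=\lim_{\nn\to\infty}\frac{\ZZ(\G_\y^\nn)}{\ZZ(\G_\x^\nn)},
\]
where $\hhi$ is the weak $L^1$-limit of $\hh_r$ on each $\Hcal_k$.

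To identify $\hhi$ explicitly, I would invoke Lemma~\ref{l:82}(2): the quantity $\log\hh_r$ is $L^1(\G_\y^\nn)$-Cauchy uniformly in $\nn$ and uniformly in $(\x,\y)$ on compact sets in $\C^{2\mm}$. Hence $\hh_r$ converges $\G_\y$-almost surely, and the almost-sure limit, being bounded on each $\Hcal_k$ by the definition \eqref{:71g}, must coincide with the weak $L^1$-limit $\hhi$; the uniform-in-$\x$ convergence on compact sets asserted in \eqref{:21c} is inherited directly from Lemma~\ref{l:82}(2). For the constant, formula \eqref{:42c} (extended continuously to configurations with $\Delta(\x)=0$ or $\Delta(\y)=0$ via Lemma~\ref{Phiz} and Remark~\ref{rem:zx}) gives $\zeta=Z(\y)/Z(\x)=\ZZ_{\x\y}^{-1}$, establishing \eqref{:21g}. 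Mutual absolute continuity then follows by symmetry in $\x$ and $\y$. The main technical obstacle of the overall argument has already been resolved in Sections~\ref{s:6new} and~\ref{s:8}, namely the uniform-in-$\nn$ control of tail fluctuations of the one-body logarithmic sums; once that is in place, the identification of the density through the abstract framework is essentially formal.
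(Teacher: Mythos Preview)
Your proposal is correct and follows essentially the same route as the paper's proof: verify \thetag{A1}--\thetag{A5} via Lemmas~\ref{l:80}, \ref{l:83}, \ref{l:86} and Corollary~\ref{cor:log}, apply Theorem~\ref{l:7J} and Proposition~\ref{l:7X}, then identify $\hhi$ through Lemma~\ref{l:82}\,(2) and $\zeta$ through \eqref{:42c}/\eqref{:42e}. The only point to tighten is that $L^1$-convergence of $\log\hh_r$ yields $\G_\y$-a.s.\ convergence only along a subsequence; the paper absorbs this by noting that the sequence $\{b_r\}$ in \eqref{:21c} may be retaken suitably, which is consistent with the statement of Theorem~\ref{l:21}.
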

\begin{proof}
As we have seen above, all the assumptions of \tref{l:7J} are fulfilled. 
Hence from \tref{l:7J} we deduce the first claim. By \tref{l:7J} 
the Radon-Nikodym density is given by 
$ {d\G_{\x }}/{d\G_{\y }} = \zeta \hhi $, where 
$ \hhi = \limi{r} \hhr $ in the sense of \eqref{:79a} and 
$ \hhr $ on $ Q_{\text{fin}}$ is given by \eqref{:80p}. 
Hence it only remains to prove the convergence $ \hhi = \limi{r} \hhr $ is 
$ \G_{\y }$-a.s.\ by retaking the sequence $ \{b_r\}$ suitably, 
and to prove $ \zeta = 1/\ZZ_{\x \y } $. 

By \lref{l:82} \thetag{2}, we see that 
$ \log\hhi = \limi{r} \log\hhr $ converges  
in $ L^1 (\G_{\y })$ because $ \G_{\y } = \nu ^{\infty}$. 
Hence a suitable subsequence converges almost surely in $ \xis $, and 
uniformly on any compact set in $\C \backslash \{ y_i \}_{i} $ 
for $\G_{\y }$-a.e.\ $\xis = \sum_i \delta_{s_i}$.  

Hence \eqref{:21c} and \eqref{:21g} follows from \eqref{:7Xa} 
combined with \eqref{:42e}. 
\qed\end{proof}

\begin{proof}[of \tref{l:21} ]
\tref{l:21} follows from \tref{l:87} immediately. 
\qed
\end{proof}


\section{Concluding remarks and discussions}\label{s:10}

\textbf{1.} We have proved that 
the set of Palm measures (at finite points) 
of the Ginibre point process is decomposed into 
equivalence classes $\sqcup_{\mm=0}^{\infty} \mathcal{G}_{\mm}$ 
with respect to absolute continuity, where 
$\mathcal{G}_{\mm} = \{\G_{\x}, \x \in \C^{\mm}\}$.  
This result can be viewed as a generalization of the similar result for
finite point processes, i.e.,   
in the Ginibre case, $\infty -k  \not= \infty - \ell$ 
makes sense like $n-k \not= n - \ell$ for distinct $k$ and $\ell$. 
DPP can be thought to be associated with a reproducing kernel Hilbert
space; Ginibre is associated with the Bargmann-Fock space $H_{\K}$
with reproducing kernel being $\K(z,w) = e^{z \wbar}$, 
which is the space of $L^2$-entire
functions with respect to the complex Gaussian measure. 
The integral operator with kernel $\K$ is the projection operator 
from $L^2(\C, \gsf(z) dz)$ to $H_{\K}$. 
The Palm kernel $\K_{\x}$ for $\x = (x_1,\dots,x_{\mm})$ corresponds to 
the projection operator to the subspace $H_{\K} \ominus span\{\K(\cdot,
x_i), i=1,2,\dots, \mm\}$.   
From our results, the codimension of the range of $\K_{\x}$ in $H_{\K}$
can be detected from a typical configuration 
and subspaces of the same codimension are compared in 
terms of Radon-Nikodym density. 

One can also define a Palm measure $\G_{\x}$ for $\x \in
\C^{\infty}$, 
i.e., $\x$ is a countable subset of $\C$. 
It would also be interesting to discuss absolute continuity for such Palm
measures. 
In this direction, 
it is shown in \cite{Ghosh1,GP} that the conditioning of configuration 
outside of a disk determines 
the number of particles inside the disk $\G$-a.s, which is called {\it
rigidity} in Ginibre point process. 
From the above discussion, 
this might mean that 
the subspace $H_K \ominus\{\K(\cdot, x_i), i \in \n\}$ is finite dimensional.  
This is also related to a completeness problem of random exponentials
discussed in 
\cite{Ghosh2}. 

Absolute continuity between Poisson point processes 
can be completely determined by the Hellinger distance between their
intensity measures \cite{sk,T}; 
this is essentially due to Kakutani's dichotomy for absolute
continuity of infinite product measures \cite{kaku}. 
Absolute continuity for general DPPs seems more subtle 
from known results (\cite{HL,HS}) and it should also be discussed. 

\textbf{Question.} 
(i) Can one give a criterion for absolute continuity between 
(a class of) DPPs $\mu_{K,\la}$ in terms of $K$ and $\la$? \\
(ii) For fixed $K$ and $\la$, 
decompose $\{(\mu_{K,\la})_{\x}, \x \in R^{\mm}, \mm \in \n \cup
\{0\}\}$ into equivalence classes with respect to absolute continuity. 

\medskip
\noindent
\textbf{2.} 
We recall the notions of Ruelle's class potentials, 
canonical Gibbs measures and DLR equations. 
An interaction potential 
$ \Psi: \mathbb{R}^d \to \mathbb{R}\cup\{ \infty \} $ 
is called of Ruelle's class if $ \Psi $ is super stable in the sense of 
\cite{ruelle}, and regular in the following sense:   
There exists a positive decreasing function 
$ \psi :\mathbb{R}^+ \to \mathbb{R}$ and a constant $ R_0 >0 $ such that 
\begin{align}\label{:27B}&
\Psi (x) \ge - \psi (|x|) \quad \text{ for all } x ,
 \quad 
\Psi (x) \le \psi (|x|) \quad \text{ for all } 
  |x| \ge R_0 ,
\\ \notag  &
\int_0^{\infty} \psi (t)\,  t^{d-1}dt < \infty 
.\end{align}
A random point field $ \mu $ on $ \mathbb{R}^d$ is called 
a $ (\Phi , \Psi )$-canonical Gibbs measure 
if its regular conditional probabilities 
 $$ 
 \mu _{r,\xi }^{m} = 
 \mu (\,  \pi_r \in \cdot \, | \, 
 \pi_r^c (\mathsf{x}) = 
 \pi_r^c (\mathsf{\xi }),\, \mathsf{x}(S_r  ) = m )
 $$
satisfy the DLR equation. 
That is, for all  $r,m\in \mathbb{N}$ and $ \mu $-a.s.\! $ \xi $, 
\begin{align}\notag 
 \mu _{r,\xi }^{m} (d\mathsf{x}) =  
\frac{1}{\mathcal{Z}_{r,\xi}^m } 
e^{-\beta \mathcal{H}_{r,\xi}(\mathsf{x}) } 
\Pi_{\mathbf{1}_{S_r}dx}^{m} (d\mathsf{x}) 
.\end{align}
Here, 
$ \mathcal{Z}_{r,\xi}^m $ is the normalization, 
$ \Pi_{\mathbf{1}_{S_r}dx}^{m}= \Pi_{\mathbf{1}_{S_r}dx} 
(\cdot \cap \{ \mathsf{x}(S_r)=m \} )$ is the Poisson measure with intensity 
$\mathbf{1}_{S_r}dx$, and 
\begin{align}& \label{:A2y}
\mathcal{H}_{r,\xi}(\mathsf{x}) = \sum_{x_i \in S_r} \Phi (x_i) + 
\sum_{i<j,\,  x_i, x_j\in S_r, } \Psi (x_i-x_j) 
+ \sum_{x_i \in S_r,\, \xi_k \in S_r^c} \Psi (x_i-\xi_k)
\end{align}
If $ \mu $ is translation invariant, then $ \Phi = 0 $. 
From this and \eqref{:A2y}, $ \mu $ is loosely given by \eqref{:11ccc}. 

For $\gamma>0$ and $x \in \mathbb{R}^{\gamma}$, we define 
\begin{align}\label{:a1}&
\Psi_{\gamma} (x)= 
\begin{cases}
 \frac{1}{\gamma -2} |x|^{2-\gamma }
&(\gamma \not= 2)
\\
- \log |x|
& (\gamma = 2)
,\end{cases}
\end{align}
and its gradient is then given by 
\begin{align} \label{:a2}&
\nabla \Psi_{\gamma} (x)=  - \frac{x}{|x|^{\gamma }} 
.\end{align}
When $\gamma \in \n$,  
$ \Psi_{\gamma}$ is $\frac{\sigma_{\gamma}}{2}$ times 
the fundamental solution of $-\frac{1}{2}\Delta $
on $ \mathbb{R}^{\gamma }$, 
where $ \sigma_{\gamma} = 2\pi^{\gamma /2}/\Gamma (\frac{\gamma }{2})$ 
(the surface volume of the $ (\gamma -1) $-dimensional unit sphere
$\mathbb{S}^{\gamma-1}$). 
Since $ \Psi _{\gamma }$ gives the electrostatic potential 
in $ \mathbb{R}^{\gamma }$ ($ \gamma = 3$), 
 we call them Coulomb potentials.  
The sign of $ \Psi _{\gamma }$ is chosen 
in such a way that the potential describes the system of one component plasma. 

Here we call a translation invariant point process 
$ \mu _{\beta,\gamma,d}$ in $ \mathbb{R}^d$ 
a Coulomb point process if $ \mu _{\beta,\gamma,d}$ is 
a ``Gibbs measure'' with $ \gamma $-dimensional Coulomb 
potential $ \Psi _{\gamma }$ 
with inverse temperature $ \beta $, 
and called it a \textit{strict} Coulomb point process if $ \gamma = d $. 
 If $ d+2 < \gamma $, then $ \Psi _{\gamma }$ is a potential 
in the regime the classical theory can be applied to, and hence 
$\mu_{\beta, \gamma, d}$ can be constructed as a Gibbs 
measure via the ordinary DLR equation; however, if $ d \le \gamma \le d+2 $, 
then the integrability of the potential fails and 
the DLR equation does not make sense. 
So we need to construct $\mu_{\beta, \gamma, d}$ and understand it as a 
``Gibbs measure'' through the logarithmic derivatives 
discussed in Section 2. 
We characterize $\mu_{\beta, \gamma, d}$ using the logarithmic derivative
such that 
\[
 {\sf d}^{\mu_{\beta, \gamma, d}}(x, {\sf s}) 
= - \beta \lim_{r \to \infty} \sum_{|x - s_i| < r} 
\nabla \Psi_{\gamma}(x-s_i) 
= \beta \lim_{r \to \infty} \sum_{|x - s_i| < r}
\frac{x-s_i}{|x-s_i|^{\gamma}}. 
\]
We remark that, because $\mu_{\beta, \gamma, d}$ is translation
invariant, 
the sum converges in $L^1_{\mathrm{loc}}(\mathbb{R}^d \times Q, \mu_{\beta, \gamma,
d}^{1})$. 
Ginibre point process is, so far, the only example of 
strict Coulomb point processes rigorously constructed and verified 
as the ``Gibbs measure'' in the case of $ (\beta,\gamma,d) = (2,2,2)$.  

\textbf{Question.} 
\thetag{iii} 
It would be interesting to construct strict Coulomb point processes 
other than the Ginibre point process, and to prove the analogy of 
Theorem~\ref{main1} holds for all $ \beta > \beta_0$ for some 
 $ \beta_0\ge 0$, and if this is the case, then to prove or disprove 
that $ \beta_0 >0 $, which is the existence of a phase transition for this phenomena. 

\vskip 5mm
\noindent
{\bf Acknowledgment. }
The authors would like to thank the anonymous referee for the careful reading and 
helpful comments which improve our manuscript. 
The first author~(HO)'s work was supported in part 
by JSPS Grant-in-Aid for Scientific Research (A) No. 24244010 and 
(B) No. 21340031.  
The second author~(TS)'s work was supported in part 
by JSPS Grant-in-Aid for Scientific Research (B) No. 22340020 and 
(B) No. 26287019.

\end{document}